\def\R{\mathbb R}
\def\N{\mathbb N}
\newcommand\pa{{\mathfrak a}}
\newcommand\pb{{\mathfrak b}}
\def\<{\langle}
\def\>{\rangle}
\newcommand{\fol}{{\mathcal{F}}}
\newcommand{\folh}{{\mathcal{F} }_h}
\newcommand{\dist}{\vec{\mathcal{K}}}
\newcommand{\disth}{\vec{\mathcal{K} }_h}
\def\<{\langle}
\def\>{\rangle}
\newcommand{\ssanalytic}{subanalytic }
\newcommand{\DeltaPerp}{\Delta^{\perp}}
\numberwithin{equation}{section}
\newtheorem{theorem}{Theorem}
\newtheorem{lemma}[theorem]{Lemma}
\newtheorem{proposition}[theorem]{Proposition}
\newtheorem{corollary}[theorem]{Corollary}
\newtheorem{definition}[theorem]{Definition\rm}
\newtheorem{remark}[theorem]{Remark}
\numberwithin{theorem}{section}
\title{The analytic minimal rank Sard Conjecture}
\author{A.~Belotto da Silva}
\address{Universit\'{e} Paris Cit\'{e} and Sorbonne Universit\'{e}, UFR de Math\'{e}matiques, Institut de Math\'{e}matiques de Jussieu-Paris Rive Gauche, UMR7586,
F-75013 Paris, France. Institut universitaire de France (IUF).}
\email{andre.belotto@imj-prg.fr}
\author{A.~Parusi\'nski}
\address{Universit\'e C\^ote d'Azur, CNRS, Labo.\ J.-A.\ Dieudonn\'e, UMR CNRS 7351, Parc Valrose, 06108 Nice Cedex 02, France}
\email{adam.parusinski@unice.fr}
\author{L.~Rifford}
\address{Universit\'e C\^ote d'Azur, CNRS, Labo.\ J.-A.\ Dieudonn\'e,  UMR CNRS 7351, Parc Valrose, 06108 Nice Cedex 02, France}
\email{ludovic.rifford@math.cnrs.fr}
\date{}
\begin{document}

\begin{abstract}
We obtain, under an additional assumption on the subanalytic abnormal distribution constructed in \cite{bprfirst}, a proof of the minimal rank Sard conjecture in the analytic category. It establishes that from a given point the set of points accessible through singular horizontal curves of minimal rank, which corresponds to the rank of the distribution, has Lebesgue measure zero. The minimal rank Sard Conjecture is equivalent to the Sard Conjecture for co-rank $1$ distributions.
\end{abstract}

\maketitle

\section{Introduction}\label{sec:Intro}

The topic of this paper is the \emph{minimal rank Sard Conjecture in sub-Riemannian geometry}. This is a follow-up of our previous work \cite{bprfirst}, where we provide a geometrical setting to study the \emph{Sard Conjecture} in arbitrary dimensions. We rely on \cite[Sections 1.1, 1.2 and 1.4]{bprfirst} for a complete presentation of the Conjecture and its importance.

Let $M$ be a smooth connected manifold of dimension $n\geq 3$ equipped with a distribution $\Delta$ of rank $m< n$ which is \emph{bracket generating}. Recall that the Sard Conjecture is only known in very few cases whenever $\dim M >3$. In fact, all previous results focus on Carnot groups \cite{agrachev14,bv20,lmopv16,montgomery02,ov19,riffordbourbaki,lrtPreprint},. Whenever $\mbox{dim}(M)=3$, much more is known by following a geometrical approach inspired by the foundation paper of Zelenko and Zhimtomirskii \cite{zz95}. In fact, in a joint work with Figalli \cite{bfpr18}, we have proved the strong version of the Sard Conjecture in the analytic three dimensional case, improving a previous result of Belotto and Rifford \cite{br18}. The proof is based on a delicate study of the geometrical properties of the, so-called, \emph{characteristic foliation} introduced in \cite{zz95}. It makes use of methods from symplectic geometry, differential topology and singularity theory. Some of the singularity methods -- such as resolution of singularities of metrics and foliations, and regularity of transition maps -- are constrained to small dimensions.

The goal of this paper is to generalize the heart of the arguments of \cite{bfpr18} to arbitrary dimensions, in order to address the \emph{minimal rank Sard Conjecture}. This is the first step in our program to understand the Sard Conjecture in arbitrary dimensions, as explained in \cite[Section 1.4]{bprfirst}. To this end, we follow the framework introduced in \cite{bprfirst}, where we developed the properties of the natural generalization of characteristic foliations, which we called \emph{abnormal distribution} $\vec{\mathcal{K}}$, see \cite[Theorem 1.1]{bprfirst}. We then proceed to replace several of the delicate singularity arguments from \cite{bfpr18} by subanalytic and symplectic arguments; we highlight the new notion of \emph{witness transverse sections} which we expect to be of independent interest to foliation theory, see Theorem \ref{thm:Witness}. This allows us to prove the minimal rank Sard Conjecture under an additional qualitative hypothesis on the abnormal distribution, which we call \emph{splitability}, see Theorem \ref{thm:Sardminrank} and Definition \ref{def:Splittable}. The remaining difficulty is, non-surprisingly, related to singularity theory: one needs to exclude pathological asymptotic behaviors of a foliation of rank at least $2$ over its singular set, see Section \ref{ex:NonSplittable} for an example.

\subsection{Main result}
Let us briefly recall the main objects introduced in \cite{bprfirst} -- we equally rely on that after-mentioned paper for extended discussion on these objects.

Given an analytic totally nonholonomic distribution $\Delta$ on a real-analytic connected manifold $M$, we consider the {\it nonzero annihilator} of $\Delta$ in the cotagent bundle $T^*M$:
\begin{equation}\label{eq:annihilator}
\DeltaPerp : = \Bigl\{ \pa=(x,p) \in T^*M \, \vert \, p \neq 0 \mbox{ and } p\cdot v =0, \, \forall v \in \Delta(x)\Bigr\}.
\end{equation}
By \cite[Theorem 1.1]{bprfirst}, there exists an open and dense subanalytic set $\mathcal{S}_0 \subset \DeltaPerp$ called essential domain. Its complement, the set $\Sigma = \DeltaPerp \setminus \mathcal{S}_0$, is in fact a proper analytic subvariety of $\DeltaPerp$. Moreover, there exists a subanalytic distribution $\vec{\mathcal{K}}$ over $\DeltaPerp$ which is, in fact, a subanalytic \emph{isotropic foliation} over $\mathcal{S}_0$; we denote the restriction of $\vec{\mathcal{K}}$ to $S_0$ by $\vec{\mathcal{K}}_{|S_0}$. We call $\vec{\mathcal{K}}$ the abnormal distribution because a horizontal path $\gamma : [0,1] \to M$ is singular if, and only if, it admits a lift $\xi: [0,1] \to \DeltaPerp$ which is horizontal with respect to $\vec{\mathcal{K}}$, see \cite[Theorem 1.1(iii)]{bprfirst}. 

In our main result, we will need to add an extra hypothesis on the foliation $\vec{\mathcal{K}}_{|S_0}$, which we call \emph{splittability}. We postpone the precise definition to Section \ref{ssec:introsplittable}, and we anticipate in here that any line foliation is splittable. We can now state our main result:

\begin{theorem}[Minimal rank Sard Conjecture for splittable foliatons]\label{thm:Sardminrank}
Assume that both $M$ and $\Delta$ are real-analytic. If the foliation $\vec{\mathcal{K}}_{|S_0}$ is splittable, then the minimal rank Sard conjecture holds true.
\end{theorem}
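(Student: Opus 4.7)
The plan is to argue by contradiction. Fix $\bar{x}\in M$ and suppose that $E_0:=\mathrm{Abn}^m_\Delta(\bar{x})$ has positive Lebesgue measure in $M$. The goal is to lift most of $E_0$ into the essential stratum $\mathcal{S}_0\subset\DeltaPerp$, where by Theorem \ref{thm:SubanalyticDistribution}(ii) the distribution $\vec{\mathcal{K}}$ restricts to an integrable isotropic analytic foliation $\mathcal{F}^0$, and then use the splittability of $\mathcal{F}^0$ together with symplectic and subanalytic arguments to contradict positivity of measure. First I reduce to $\mathcal{S}_0$: by Theorem \ref{thm:SubanalyticDistribution}(iii) every $y\in E_0$ is the endpoint of a $\vec{\mathcal{K}}$-horizontal lift $\psi_y:[0,1]\to\DeltaPerp$, and since $\mathcal{S}_0$ is open and dense with analytic complement, either $\psi_y\subset\DeltaPerp\setminus\mathcal{S}_0$, or $\psi_y$ meets $\mathcal{S}_0$ on a nonempty open subset of $[0,1]$. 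Theorem \ref{thm:SardStrata}, applied to the strata $\mathcal{S}_\alpha\neq\mathcal{S}_0$ with an exhaustion of $\DeltaPerp_{\bar{x}}$ by relatively compact subanalytic sets and finite lengths, shows that the endpoints coming from the first case, as well as from the pieces of bifurcating lifts that lie in lower strata, form a measure-zero subset of $M$. After this refinement I obtain parameters $\ell>0$, $t^\ast\in(0,1)$, a relatively compact subanalytic set $C\subset\mathcal{S}_0$, and a positive-measure $E\subset E_0$ such that for every $y\in E$ one has $\psi_y(t^\ast)\in C$, the restriction $\psi_y|_{[t^\ast,1]}$ lies in $\mathcal{S}_0$ with length at most $\ell$, and by integrability of $\mathcal{F}^0$ it is contained in a single leaf $\mathcal{L}_y$ of $\mathcal{F}^0$, so that $y=\pi(\psi_y(1))\in\pi(\mathcal{L}_y)$.

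Next I transfer positive Lebesgue measure from $M$ to a local transverse section of $\mathcal{F}^0$. Set $k:=\dim\mathcal{F}^0\leq m-2$, by Theorem \ref{thm:SubanalyticDistribution}(iv). Since the Liouville $1$-form vanishes along $T\DeltaPerp$, the Euler vector field lies in $\ker(\omega^\perp)$, hence is tangent to every leaf of $\mathcal{F}^0$, and consequently $\pi(\mathcal{L})$ has dimension at most $k-1$ for every leaf $\mathcal{L}$. Around a density point $\pa^\ast\in C$ of $\{\psi_y(t^\ast):y\in E\}$ I build a subanalytic local transverse section $S\subset\mathcal{S}_0$ to $\mathcal{F}^0$ through $\pa^\ast$, and I invoke the witness transverse section construction of Section \ref{ssec:witness} to refine $S$ so that distinct leaves meeting $S$ at non-$(\mathcal{F}^0,\ell)$-related points project to disjoint families of endpoints in $M$ accessible within length $\ell$ along the respective leaves. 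A Fubini-type decomposition of $E$ along the leaves $\mathcal{L}_y$, using the bound $\dim\pi(\mathcal{L}_y)\leq k-1$ together with $\mathcal{L}^n(E)>0$, then yields a measurable selection $\Phi:E\to S$, $y\mapsto\mathcal{L}_y\cap S$, whose image $\Phi(E)\subset S$ carries positive $(\dim S)$-dimensional Lebesgue measure.

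Finally, I apply the splittability of $\mathcal{F}^0$ (Definition \ref{def:Splittable}) to the section $S$, the set $\Phi(E)$, and the length $\ell$ from Step 1: this provides a positive-measure subset $F\subset\Phi(E)$ of pairwise not-$(\mathcal{F}^0,\ell)$-related points, hence lying on pairwise distinct leaves. The witness property of $S$ then forces the corresponding endpoints $\{y\in E:\Phi(y)\in F\}\subset M$ to be obtained from pairwise disjoint pieces of projected leaves, and via Theorem \ref{thm:MinimalSubanalyticDistribution} each such piece sits inside a single leaf of the integrable distribution $\mathcal{H}$ on $\mathcal{R}_0$ of dimension $\leq m-2$; combined with the measurable selection of Step 2 and the bound $\dim\pi(\mathcal{L})\leq k-1$, this contradicts the positivity of $\mathcal{L}^n(\Phi^{-1}(F))$ inherited from $\mathcal{L}^n(E)>0$. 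The hardest point is the joint construction of the witness transverse section and the Fubini argument in Step 2: that is where the symplectic input of \cite{bfpr18}, the isotropy and dilation-invariance of $\vec{\mathcal{K}}$ on $\mathcal{S}_0$, the Euler-tangency of $\mathcal{F}^0$, and the strict rank inequality $k\leq m-2$ from Theorem \ref{thm:SubanalyticDistribution}(iv) must be combined so that abstract splittability on $S$ translates into genuine measure-theoretic separation of endpoints in $M$; handling bifurcations between strata in the reduction of Step 1 is a secondary technical difficulty, bypassed by an iterated application of Theorem \ref{thm:SardStrata}.
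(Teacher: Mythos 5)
Your proposal contains a concrete error that breaks Step~2 and, more importantly, omits the symplectic mechanism that actually drives the contradiction in the paper's proof.

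First, the factual issue: the Euler vector field $E$ is \emph{not} contained in $\ker(\omega^{\perp})$. Indeed, $\iota_E\omega=-\lambda$ and $\lambda$ does not vanish on $T\DeltaPerp$ (it only vanishes on $\vec\Delta\cap T\DeltaPerp$, since $\lambda(\vec h^i)=h^i=0$ on $\DeltaPerp$). Equivalently, by \eqref{EQcharac} one has $\ker(\omega^{\perp}_{\pa})=\vec\Delta(\pa)\cap T_{\pa}\DeltaPerp$, and $\vec\Delta$ is spanned by $\vec h^i=(X^i,-p\cdot d_xX^i)$, which are everywhere transverse to the vertical; the vertical field $E=(0,p)$ is therefore never in $\vec\Delta$ off the zero section. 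Consequently $\vec{\mathcal K}_{|\mathcal S_0}\subset\vec\Delta$ meets the kernel of $d\pi$ trivially, $\pi$ restricted to a leaf $\mathcal L$ is an immersion, and $\dim\pi(\mathcal L)=m_0$, not $m_0-1$. The Fubini selection in your Step~2, which leans on the bound $\dim\pi(\mathcal L)\le k-1$, does not go through as written, and does not produce the transfer of positive measure to a transverse section that you claim. (That transfer is achieved in the paper's Lemma~\ref{lem:SecondReduction} by a genuine Fubini argument adapted to a foliation chart and the vertical fibers of $\DeltaPerp$, and the measure carried by the selected set $\tilde{\mathcal A}^{\bar\pa}\subset\mathcal T_{\bar\pa}$ is measured against the form $\eta=(\omega^{\perp})^l$, not Lebesgue measure of $S$.)

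Second, and more seriously, even after Step~2 is repaired your Step~3 has no mechanism to produce a contradiction. Having a positive-measure subset $F\subset S$ of pairwise distinct leaves does not contradict anything by itself; the distinct leaves still project into $M$ and nothing rules out their projections overlapping and covering a positive-measure set. The paper's contradiction is symplectic, not set-theoretic. The key points, entirely absent from your sketch, are: (a) $\eta=(\omega^{\perp})^l$ (with $r=2l$ the corank of $\vec{\mathcal K}_{|\mathcal S_0}$ in $\DeltaPerp$) is a closed $2l$-form that is a volume form on any $r$-dimensional transversal of $\mathcal F^0$ in $\mathcal S_0$ (Proposition~\ref{PROPomegal}(ii)); (b) by Stokes, its integral over a transversal is preserved when the transversal is flowed inside the leaves, because $\vec{\mathcal K}_{|\mathcal S_0}=\ker(\omega^{\perp}_{|\mathcal S_0})$ kills $\eta$ along the cylinder boundary (this is the identity~\eqref{24mars1}); (c) by Theorem~\ref{thm:SubanalyticDistribution}(iv) the kernel of $\omega^{\perp}$ jumps by at least $2$ on $\Sigma$, so by Proposition~\ref{PROPomegal}(iii) $\eta$ tends to zero near $\Sigma$; (d) the witness transverse section $X^c=X\cap h^{-1}(c)$ from Theorem~\ref{thm:CorWitness} lives close to $\Sigma$ with uniformly bounded $r$-dimensional $\tilde g$-volume, so the $\eta$-volume of $X^c$ tends to $0$ as $c\to 0$. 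The contradiction is then between (a)--(b) (the $\eta$-volume of the transported piece equals that of the starting piece, bounded below by $\nu/16$) and (c)--(d) (the image lands inside $X^c$, whose $\eta$-volume vanishes as $c\to 0$). Splittability is not used to ``separate projected endpoints in $M$''; it is used in the proof of Lemma~\ref{lem:MetricProperties}(iv) to guarantee that the geodesic homotopy $H^c$ from the transverse section to $Y^c\subset X^c$ is injective (otherwise two distinct starting points would be $(\vec{\mathcal K},2\bar\ell+5)$-related), so that the pieces $\mathcal O_j^{c,1}$ are pairwise disjoint and Stokes applies to each separately, yielding~\eqref{24mars1}. You will also notice that the witness section plays a role very different from the one you describe: it is not a refinement of the transversal $S$ near $\bar\pa$, but a subanalytic ``attractor'' near $\Sigma$, reached by flowing along the gradient of a distance function (Section~\ref{SECTransversality}), which is what makes the $\eta$-vanishing estimate uniform.

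In short: the reduction in your Step~1 is in the right spirit (it roughly parallels Lemma~\ref{lem:FirstReduction}), but Step~2 rests on a false statement about the Euler field and on the wrong measure, and Step~3 never identifies what quantity is both preserved by the foliation and forced to degenerate. Without the transverse symplectic volume $\eta$, its Stokes invariance, and its collapse near $\Sigma$, no contradiction comes out of the positive-measure selection you construct.
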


The proof of Theorem \ref{thm:Sardminrank} will consist in showing by contradiction that if the set of minimal rank singular horizontal paths from a given point reaches a set of positive Lebesgue measure in $M$, then we can, roughly speaking, lift all those horizontal paths into abnormal curves sitting in the leaves of the foliations given by $\vec{\mathcal{K}}$ on its essential domain and from here get a contradiction. This strategy requires to be able to select from a given set of positive measure contained in a transverse local section of the foliation $\vec{\mathcal{K}}_{|\mathcal{S}_0}$ a subset of positive measure whose all points belong to distinct leaves of $\vec{\mathcal{K}}_{|\mathcal{S}_0}$. A foliation subject to such a selection result will be called {\it splittable}; this is the heuristic behind the definition of splittability given in Section \ref{ssec:introsplittable}.

Thanks to \cite[Theorem 1.1(iv)]{bprfirst}, moreover, we know that the rank of the abnormal distribution restricted to its essential domain $\vec{\mathcal{K}}_{|S_0}$ is less than or equal to $m-2$. Hence, the Minimal rank Sard conjecture holds true whenever $\Delta$ has rank $\leq 3$. Furthermore, the equivalence of the minimal rank Sard Conjecture with the Sard Conjecture in the case of corank-$1$ distributions yields the following immediate corollary:

\begin{corollary}\label{Sardcodim1}
Assume that both $M$ and $\Delta$ are analytic. If $\Delta$ has codimension one ($m=n-1$) and the distribution $\vec{\mathcal{K}}_{|\mathcal{S}_0}$ is splittable, then the Sard conjecture holds.
\end{corollary}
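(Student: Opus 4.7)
The proof plan is straightforward because the hypothesis $m = n-1$ collapses the Sard Conjecture onto its minimal rank version. I would begin by recalling from the introduction that the rank of any singular horizontal path with respect to $\Delta$ is an integer $r \in \{m, \ldots, n-1\}$, equivalently that its corank (the dimension of its space of abnormal lifts) lies in $\{1, \ldots, n-m\}$. The lower bound $r \geq m$ is automatic and the upper bound $r \leq n-1$ reflects the fact that $\DeltaPerp$ is invariant by fiberwise dilation, so the Euler vector field always provides a nonzero element of $\mbox{ker}(\omega^{\perp}_{\pa})$ along any abnormal lift. When $m = n-1$ this range reduces to the singleton $\{n-1\}$, forcing every singular horizontal path to have rank exactly $m$ and therefore to be of minimal rank.

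Consequently, for every $x \in M$ the full Sard set degenerates to a single piece:
\[
\bigcup_{r=m}^{n-1} \mbox{Abn}^{r}_{\Delta}(x) \;=\; \mbox{Abn}^{m}_{\Delta}(x).
\]
Under the assumed splittability of $\vec{\mathcal{K}}_{|\mathcal{S}_0}$, Theorem \ref{thm:Sardminrank} yields that the right-hand side has Lebesgue measure zero in $M$, which is precisely the Sard Conjecture in the codimension one setting.

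The hard part is essentially nonexistent: the corollary is pure bookkeeping on top of Theorem \ref{thm:Sardminrank}. The only point that might deserve a careful sentence is the collapse of the rank range in the corank-one setting, but this is already recorded in the preliminary discussion following Hsu's characterization. No further use of the structural results (Theorems \ref{thm:SubanalyticDistribution}, \ref{thm:SardStrata}, or \ref{thm:MinimalSubanalyticDistribution}) is needed for the deduction itself, these having already been absorbed into the statement of Theorem \ref{thm:Sardminrank}.
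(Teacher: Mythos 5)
Your proof is correct and coincides with the paper's intent: the corollary is stated as an immediate consequence of Theorem \ref{thm:Sardminrank} via the observation, recorded in the introduction after Hsu's characterization, that in corank one every singular horizontal path has rank $m=n-1$, i.e.\ is of minimal rank, so $\mbox{Abn}^{m}_{\Delta}(x)$ is the entire abnormal set. One small remark: your justification of the bound $r\le n-1$ via the Euler vector field is not quite the relevant point — that bound is just the definition of a singular path (rank $<n$); it is the corank bound $\mathrm{corank}\le n-m$ (coming from the fiber dimension of $\DeltaPerp$) together with $\mathrm{corank}\ge 1$ that pins the rank to the interval $[m,n-1]$, but this does not affect the deduction.
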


In particular, the Sard Conjecture holds true when $n=4$ and $m=3$. This four dimensional result remains true in the $C^{\infty}$ category, as we show in our follow-up work \cite{bprfollowup}, where we extend a few results (as much as we could) from this work to the smooth category.

Finally, our approach to the proof of Theorem \ref{thm:Sardminrank} requires to lift the set of singular horizontal curves in $M$ to a subset of $\DeltaPerp$ of positive transverse volume with respect to $\vec{\mathcal{K}}$. As a consequence, we cannot prove the Sard conjecture for distribution of corank strictly greater than one yet. For an extended discussion, see \cite[Section 1.4 and 1.5]{bprfirst}.

\subsection{Witness transverse sections}
The proof of Theorem \ref{thm:Sardminrank} follows from a combination of the description of abnormal lifts given in \cite[Theorem 1.1]{bprfirst}; differential geometry methods; and a new result on the existence of special transverse sections for singular analytic foliations, which we call \emph{witness transverse sections}, see Theorem \ref{thm:Witness}.

In fact, roughly speaking, we show that if $\mathcal{F}$ is a singular analytic foliation of generic corank $r$ in a real-analytic manifold $N$ equipped with a smooth Riemannian metric $g$, then we can construct locally, for every point $x$ in the singular set $\Sigma$ of $\mathcal{F}$, a special subanalytic set $X\subset V \setminus \Sigma$ where $V$ is an open neighborhood of $x$, called \emph{witness transverse section}. This section has the property that its slices $X^c:=X\cap h^{-1}(c)$ ($c>0$) with respect to some nonnegative analytic function $h$ (verifying $\Sigma \cap V=\{h=0\}$) have dimension $\leq r$ with $r$-dimensional volume uniformly bounded (w.r.t $c$) and such that any point of $V$ can be connected to $X^c$ through a horizontal curve (w.r.t. $\mathcal{F}$) of length no greater than $ \ell$ (w.r.t. $g$). We refer to Section \ref{ssec:witness} for further detail.

\subsection{Splittable foliation}\label{ssec:introsplittable}
Let $N$ be a real-analytic manifold of dimension $n\geq 2$ equipped with a smooth Riemannian metric $h$ (not necessary assumed to be complete) and  $\mathcal{F}$ a (regular) analytic foliation of constant rank $d\in [1,n-1]$. Given $\ell >0$, we say that two points $x$ and $y \in N$ are $(\mathcal{F},\ell)$\emph{-related} if there exists a smooth path $\varphi:[0,1] \rightarrow N$ with length $\in [0,\ell]$ with respect to $h$ which is horizontal with respect to $\mathcal{F}$ and joins $x$ to $y$. Note that the $(\Delta,\ell)$-relation is not an equivalence relation, since it is not transitive. Moreover, given a point $\bar{x}\in N$, we call {\it local transverse section at $\bar{x}$} any set $S\subset N$ containing $\bar{x}$ which is a smooth submanifold diffeomorphic to an open disc of dimension $n-d$ and transverse to the leaves of $\mathcal{F}$. 

\begin{definition}[Splittable foliation]\label{def:Splittable}
We say that the foliation $\mathcal{F}$ is splittable in $(N,h)$ if for every $\bar{x} \in N$, every local transverse section $S$ at $\bar{x}$ and every $\ell>0$, the following property is satisfied: 

\medskip
\noindent
For every Lebesgue measurable set $E\subset S$ with $\mathcal{L}^{n-d}(E)>0$, there is a Lebesgue measurable set $F\subset E$ such that $\mathcal{L}^{n-d}(F) >0$ and for all distinct points $x, \,y \in F$, $x$ and $y$ are not $(\mathcal{F},\ell)$-related.
\end{definition}

We provide in Section \ref{ssec:splittable} a sufficient conditions for a foliation to be splittable. Indeed, we introduce the notion of foliation having locally horizontal balls with finite volume (with respect to the metric $h$ in $N$), see Definition \ref{def:FiniteVolume}, and we prove that this property implies the splittability, see Proposition \ref{PROP:FiniteVolume2}. As a consequence, we infer that every line foliation is splittable, as well as every foliation whose leaves have Ricci curvatures uniformly bounded from below. In particular, all regular foliations in a compact manifold are splittable. An example of non-splittable analytic foliation in a non-compact manifold equipped with a smooth metric is presented in Section \ref{SECEXfoliation}; we do not know if such examples do exist with an analytic metric.

\subsection{Paper structure} The paper is organized as follows: in Section \ref{ssec:splittable}, we discuss the notion of spplitable foliations. In section \ref{ssec:witness} we construct \emph{Witness transverse sections} for analytic foliations, see Theorem \ref{thm:Witness}. Finally, the proof of Theorem \ref{thm:Sardminrank} is given in Section \ref{SECSardminrank}.

\medskip
\noindent
\textbf{Acknowledgment:} The first author is supported by the project ``Plan d’investissements France 2030", IDEX UP ANR-18-IDEX-0001, and partially supported by the Agence Nationale de la Recherche (ANR), project ANR-22-CE40-0014.


\section{Splittable foliations}\label{ssec:splittable}

\subsection{Sufficient conditions for splittability}
The notion of splittable foliation and of $(\mathcal{F},\ell)$-related points have been provided in the Introduction, see Definition \ref{def:Splittable}, and we follow its notation. Let us start our discussion by providing a more structured way to describe two $(\mathcal{F},\ell)$-related points. In fact, let us consider horizontal balls with respect to $\mathcal{F}$ and $h$. Given $x\in N$, we denote by $\mathcal{L}_x$ the leaf of $\mathcal{F}$ through $x$ in $N$. Then, for every $\ell>0$, we call {\it horizontal ball with respect to $\mathcal{F}$ and $h$} the subset of $\mathcal{L}_x$ given by
\[
\mathcal{L}_{x}^{\ell}:= \left\{y \in L_x \, \vert \, \exists \,  \varphi:[0,1] \to \mathcal{L}_x \mbox{ abs. cont. s.t. }  \varphi(0)=x, \, \varphi(1)=y,\, \mbox{length}^h(\varphi) \leq \ell \right\}.
\]
We check easily that two points $x,y \in N$ are  $(\mathcal{F},\ell)$-related if and only if $y\in \mathcal{L}_x^{\ell}$ (or $x\in \mathcal{L}_y^{\ell}$). Let us now introduce the following definition where $\mbox{vol}^{h,\mathcal{F}}(A)$ stands for the volume of a Borel set $A$ contained in a leaf of $\mathcal{F}$ with respect to the Riemannian metric induced by $h$ on that leaf:

\begin{definition}\label{def:FiniteVolume}
We say that the foliation $\mathcal{F}$ has locally horizontal balls with finite volume (w.r.t. $h$) if for every $x\in N$ and every $\ell>0$, there are $V>0$ and a neighborhood $\mathcal{U}$ of $x$ such that $\mbox{\em vol}^{h,\mathcal{F}}(\mathcal{L}_{y}^{\ell}) \leq V$ for all $y\in \mathcal{U}$.
\end{definition}

The first example of foliations having locally horizontal balls with finite volume is given by foliations associated with complete Riemannian metrics. As a matter of fact, if $h$ is complete, then by the Hopf-Rinow Theorem, all balls $\mathcal{L}_{y}^{\ell}$ with $y$ close to $x$ are contained in the ball centered at $x$ with radius $\ell+1$ which happens to be compact, so all of those horizontal balls are compact sets with a volume which is finite and depends continuously upon $y$.  Another example is given by foliations whose curvature satisfy a lower bound:

\begin{proposition}\label{PROP:FiniteVolume}
If $\mathcal{F}$ has rank $1$ then it has locally horizontal balls with finite volume, indeed we have for  any $x\in N$ and $\ell>0$, $\mbox{\em vol}^{h,\mathcal{F}}(\mathcal{L}_{x}^{\ell}) \leq 2\ell$. Moreover, if $\mathcal{F}$ has rank $\geq 2$ and the Ricci curvature (w.r.t. $h$) of all its leaves is uniformly bounded from below, then it has locally horizontal balls with finite volume (w.r.t. $h$).
\end{proposition}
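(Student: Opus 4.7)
My plan is to treat the two cases separately: a direct geometric argument in the rank $1$ case, and an application of Bishop's volume comparison theorem in the rank $\geq 2$ case.

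For the rank $1$ case, fix $x\in N$ and $\ell>0$, and denote by $L:=\mathcal{L}_x$ the leaf through $x$ equipped with the Riemannian metric $h_L$ induced by $h$. Being a connected $1$-dimensional Riemannian manifold, $(L,h_L)$ is isometric either to an open interval of $\R$ endowed with its standard metric, or to a circle $\R/\alpha\Z$ for some $\alpha>0$. By definition, the horizontal ball $\mathcal{L}_x^{\ell}$ coincides with the closed intrinsic ball of radius $\ell$ around $x$ in $(L,h_L)$, and in either isometric model this ball is a connected arc whose $1$-dimensional volume is at most $2\ell$. This yields $\mbox{vol}^{h,\mathcal{F}}(\mathcal{L}_x^{\ell})\leq 2\ell$, which is even a global (not merely local) estimate.

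For the rank $\geq 2$ case, set $d:=\mbox{rank}(\mathcal{F})\geq 2$ and let $K\in \R$ be such that $\mbox{Ric}^{h_L}\geq (d-1)K$ on every leaf $L$. Fix $x\in N$ and $\ell>0$. The core of the argument is that Bishop's volume comparison theorem, applied leaf by leaf to $(L,h_L)$, will yield for every $y\in L$ the bound
\[
\mbox{vol}^{h,\mathcal{F}}\left(\mathcal{L}_y^{\ell}\right) \leq V_{d,K}(\ell),
\]
where $V_{d,K}(\ell)$ denotes the volume of the closed ball of radius $\ell$ in the $d$-dimensional simply-connected model space of constant sectional curvature $K$. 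Since the right-hand side depends only on $d$, $K$ and $\ell$, and neither on $y$ nor on the leaf, setting $V:=V_{d,K}(\ell)$ gives the desired uniform bound for every $y$ in any neighborhood $\mathcal{U}$ of $x$, which proves local finiteness.

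The delicate point in the second step, and in my view the main obstacle, is that the leaves $(L,h_L)$ equipped with the metric induced from $h$ need not be complete, whereas Bishop's comparison is most often stated for complete Riemannian manifolds. This will be handled by working in geodesic polar coordinates centered at $y$ on the star-shaped open subset of $T_yL$ where $\exp_y$ is defined: the pointwise Jacobian comparison at the heart of Bishop's theorem does not use completeness, so one obtains the model bound for the image $\exp_y(\{v\in T_yL:|v|\leq\ell\}\cap\mbox{dom}(\exp_y))$. An exhaustion argument by relatively compact open subsets of $L$, together with the fact that the points of $\mathcal{L}_y^{\ell}$ not reached by any geodesic of $(L,h_L)$ from $y$ form an $h_L$-negligible subset, will then extend the bound to the full intrinsic ball $\mathcal{L}_y^{\ell}$ and conclude the proof.
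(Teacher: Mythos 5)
Your rank $1$ argument is correct (and matches the only reasonable argument): on a connected $1$-dimensional Riemannian manifold the intrinsic closed ball of radius $\ell$ is an arc (or a full circle) of length at most $2\ell$.

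For the rank $\geq 2$ case the paper leaves the proof to the reader and only records a hint (Bishop comparison, with a pointer to Chavel's book for the non-complete case), so your overall strategy is aligned with the intended route. However, the specific step you propose to handle incompleteness --- ``the points of $\mathcal{L}_y^{\ell}$ not reached by any geodesic of $(L,h_L)$ from $y$ form an $h_L$-negligible subset'' --- is not true for a general incomplete Riemannian manifold, and this is exactly the place where completeness is normally invoked. A counterexample to that measure-zero claim: take $L = \mathbb R^2 \setminus \overline{D}$ with the flat metric, where $\overline{D}$ is a small closed disc not containing $y$. Every geodesic of $L$ emanating from $y$ is a Euclidean ray from $y$ that stops the first time it meets $\overline{D}$, so the whole open ``shadow'' of $\overline{D}$ (a sector of positive two-dimensional measure) lies in $\mathcal{L}_y^{\ell}$ for $\ell$ large, yet no point of it is on any geodesic from $y$. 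Here $\mathrm{Ric}\equiv 0$ and the proposition's conclusion is of course still true (the intrinsic ball is contained in the Euclidean ball), but not because the unreached points are negligible. Worse, the sharp Bishop bound $\mathrm{vol}(B(y,r))\leq V_{d,K}(r)$ can genuinely fail for an incomplete leaf: on a flat cone of total angle $2\pi\alpha$ with the vertex removed, one has $\mathrm{Ric}\equiv 0$, but for $\alpha$ large the intrinsic ball around a point close to the (deleted) vertex has area exceeding $\pi r^2$. So the ``Jacobian comparison plus measure-zero leftover'' scheme does not close as written; the incompleteness must be dealt with differently (e.g.\ by restricting to radii for which the closed intrinsic ball is relatively compact in the leaf and combining this with the fact that one only needs a uniform local bound, or by the precise form of the comparison statement in Chavel that the authors have in mind), and this is the part of your proof that needs to be repaired rather than just invoked.
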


The proof of this result is left to the reader. We draw their attention to the fact that the comparison theorem required for the proof (of the second part) remains true for a non-complete metric (see {\it e.g.} \cite[\S 4]{chavel06}). We end this section with the result that justifies the introduction of Definition \ref{def:FiniteVolume} and provide many examples of splittable foliations.

\begin{proposition}\label{PROP:FiniteVolume2}
If $\mathcal{F}$ has locally horizontal balls with finite volume (w.r.t. $h$), then it is splittable in (N,h).
\end{proposition}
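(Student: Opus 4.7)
The plan is to localize the problem in a foliation chart, use the finite-volume hypothesis to bound uniformly the number of points of $S$ that are $(\mathcal{F},\ell)$-related to any fixed point, and then extract an $(\mathcal{F},\ell)$-antichain of positive measure from $E$ by a measurable coloring argument for bounded-degree graphs.

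First I would localize. Fix $\bar{x}\in N$, a local transverse section $S$ at $\bar{x}$, a real $\ell>0$, and a Lebesgue measurable set $E\subset S$ with $\mathcal{L}^{n-d}(E)>0$. By the Lebesgue density theorem, $E$ admits a density point $x_0$; restricting $E$ to its intersection with a small transverse disk centered at $x_0$, I may replace it by a subset of positive measure contained in $S\cap U$ for a relatively compact foliation chart $U\cong P\times Q$ of $\mathcal{F}$ around $x_0$, where $P$ is a compact $d$-dimensional plaque and $Q$ is a relatively compact open subset of $S$, and each leaf of $\mathcal{F}$ meets $U$ in a countable union of plaques of the form $P\times\{q\}$, $q\in Q$.

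Next I would establish the key uniform bound on $T_{\ell}(x):=\mathcal{L}_x^{\ell}\cap Q$: there exists $M\in\mathbb{N}$ such that $|T_{\ell}(x)|\leq M$ for every $x\in Q$. Let $D>0$ be a uniform upper bound on the intrinsic diameter (in $h$) of the plaques $P\times\{q\}$ viewed as submanifolds of their respective leaves, and $v_0>0$ a uniform lower bound on their intrinsic volume; both exist by compactness of $\overline{U}$ and smoothness of $h$. For every $y\in T_{\ell}(x)$ the plaque $P_y:=P\times\{q_y\}\subset\mathcal{L}_x$ through $y$ lies inside $\mathcal{L}_x^{\ell+D}$, since any $z\in P_y$ satisfies $\mbox{dist}_{\mathcal{L}_x}(z,x)\leq \mbox{dist}_{\mathcal{L}_x}(z,y)+\mbox{dist}_{\mathcal{L}_x}(y,x)\leq D+\ell$; moreover, the plaques $P_y$ corresponding to distinct points $y\in T_{\ell}(x)$ are pairwise disjoint (they have distinct transverse coordinates $q_y$). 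The finite-volume hypothesis applied with parameter $\ell+D$ yields a uniform upper bound $V>0$ on $\mbox{vol}^{h,\mathcal{F}}(\mathcal{L}_x^{\ell+D})$ for $x\in Q$; summing the volumes of the disjoint plaques $P_y\subset\mathcal{L}_x^{\ell+D}$ gives $|T_{\ell}(x)|\cdot v_0\leq V$, whence $|T_{\ell}(x)|\leq M:=\lfloor V/v_0\rfloor$.

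Finally, the $(\mathcal{F},\ell)$-relation defines a symmetric measurable graph $G$ on $E$ whose vertex degree is bounded by $M$ thanks to the previous step. By a classical measurable coloring result for bounded-degree Borel graphs (which can be obtained via a transfinite construction of a maximal Lebesgue measurable independent set), $E$ admits a Lebesgue measurable partition $E=E_1\sqcup\cdots\sqcup E_{M+1}$ where each $E_i$ is an independent set of $G$, i.e. an $(\mathcal{F},\ell)$-antichain; by additivity at least one $E_i$ satisfies $\mathcal{L}^{n-d}(E_i)\geq \mathcal{L}^{n-d}(E)/(M+1)>0$, and taking $F:=E_i$ proves splittability. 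The main obstacle is the uniform bound on $|T_{\ell}(x)|$, which requires careful use of the foliation chart to guarantee disjointness of plaques together with uniform control on their diameters and volumes; the key trick is to apply the finite-volume hypothesis with parameter $\ell+D$ rather than $\ell$ itself, so that the plaques emanating from the extreme points of $T_{\ell}(x)$ still sit inside a horizontal ball whose volume is controlled by $V$.
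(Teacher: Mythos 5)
Your proof is correct, and it takes a genuinely different route in its final step. Both arguments begin the same way: localize to a foliation chart and bound uniformly the number of $(\mathcal{F},\ell)$-related points in a small transversal by comparing the volume $V$ of a leafwise ball against a lower bound on plaque volumes. Your version of this step is actually the tidier one: you observe that a plaque through $y\in\mathcal{L}_x^{\ell}$ need not lie inside $\mathcal{L}_x^{\ell}$ itself but only inside $\mathcal{L}_x^{\ell+D}$ (with $D$ a uniform bound on plaque diameters), and you invoke the finite-volume hypothesis at radius $\ell+D$; the paper handles this point more loosely. The genuine divergence is in extracting $F$. The paper takes a density point of $E$ of maximal degree $k$, uses the local product structure to deform the $k$ horizontal curves through it over a small transverse sub-disk $\mathcal{D}_x$, shrinks $\mathcal{D}_x$ until all deformed endpoints exit it, and sets $F=E\cap\mathcal{D}_x$; every point of $F$ then has all its $(\mathcal{F},\ell)$-related neighbours outside $F$. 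You instead treat the relation as a bounded-degree measurable graph and extract a positive-measure colour class from a measurable proper $(M{+}1)$-colouring. The paper's route is elementary and self-contained; yours is shorter but imports a nontrivial combinatorial input.

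That imported input is the one soft spot. A ``transfinite construction of a maximal Lebesgue measurable independent set'' is not safe as stated: transfinite unions of measurable sets need not be measurable, and a Zorn argument over measurable independent sets (even modulo null sets) requires additional care to make rigorous. The clean reference is the Kechris--Solecki--Todorcevic theorem: a Borel graph of maximum degree at most $d$ admits a Borel proper $(d{+}1)$-colouring. To apply it here you should note that the $(\mathcal{F},\ell)$-relation restricted to the compact transversal $\overline{Q}$ is a closed relation (a uniformly convergent sequence of $\mathcal{F}$-horizontal paths of length at most $\ell$ with $L^2$-bounded speed subconverges to an $\mathcal{F}$-horizontal path of length at most $\ell$), hence Borel, and that connected components of a bounded-degree Borel graph are countable, so the theorem applies. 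With that citation in place your argument is complete; without it, the paper's density-point localization is the more economical way to finish.
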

\begin{proof}
Let $\bar{x}\in N$ and $\ell_0>0$ be fixed. Let $\ell>\ell_0$ and denote by $\mathcal{L}_{x}^{<\ell} \subset \mathcal{L}_{x}^{\ell} $ the union of all $\mathcal{L}_{x}^{\ell'}$ with $\ell' < \ell$. Let $V>0$ be such that $\mbox{vol}^{h,\mathcal{F}}(\mathcal{L}_{x}^{4\ell}) \leq V$ for all $x$ in an open neighborhood $U$ of $\bar{x}$. By considering a foliation chart (see \cite[Section 3]{bprfirst}) and shrinking $U$ if necessary, there exists a diffeomorphism $\Phi: W \to U$ such that $W = (-1,1)^{n} \subset \mathbb{R}^n$, $\Phi(0)= \bar{x}$, the pull-back foliation is given by $(x_{n-d+1} = \ldots = x_n=cte)$ (where $d$ is the rank of $\mathcal{F}$) and the following properties are verified:
\begin{itemize}
\item there exists a smooth transverse section $\mathcal{D}$ diffeomorphic to a disc of dimension $n-d$; 
\item there exists $\epsilon>0$ such that, for every point $x\in \mathcal{D}$, the connected component of $\mathcal{L}_x \cap U$ containing $x$, which we denote by $\mathcal{L}_{x,U}$, is such that $\mbox{vol}^{h,\mathcal{F}}(\mathcal{L}_{x,U}) >\epsilon$ and $\mbox{diam}(\mathcal{L}_{x,U})<\ell$ (where $\mbox{diam}$ stands for the diameter w.r.t. $h$).
\end{itemize}
Let $K$ be a natural number greater than $V/\epsilon$. By construction, if $x,y \in \mathcal{D}$ are $(\mathcal{F},\ell)$-related then we have $\mathcal{L}_{x,U}, \mathcal{L}_{y,U} \subset \mathcal{L}_x^{4\ell}$. Thus, since $\mbox{vol}^{h,\mathcal{F}}(\mathcal{L}_{x,U}) >\epsilon$ for every $x\in \mathcal{D}$ and $\mbox{vol}^{h,\mathcal{F}}(\mathcal{L}_{x}^{4\ell}) \leq V$ for all $x\in U$, we infer that for every $x\in \mathcal{D}$, there are at most $K$ points in $\mathcal{D}$ which are $(\mathcal{F},<\ell)$-related to $x$ (that is, they are $(\mathcal{F},\ell')$-related to $x$ for some $\ell'<\ell$). Denote by $\{x_1,\ldots,x_{k_x}\}$ these points, where $k_x \leq K$ depends on $x\in \mathcal{D}$. Let $E \subset \mathcal{D}$ be a measurable set such that $\mathcal{L}^{n-d}(E)>0$. Let $k$ be the maximum value of $k_x$ for every $x\in E$ which is a density point of $E$. Fix a density point $x\in E$ such that $k_x = k$ and consider the set $\{x_1,\ldots,x_{k}\}$ of $(\mathcal{F},<\ell)$-related points to $x$ in $\mathcal{D}$. Denote by $\varphi_i:[0,1] \to \mathcal{L}_x$, for $i=1,\ldots,k$, the absolutely continuous curves of length $<\ell$ between $x$ and $x_i$ respectively. Since $\mathcal{F}$ is everywhere regular and $\varphi_i$ has compact domain, we conclude from the foliation charts that there exists a transverse section $\mathcal{D}_x \subset \mathcal{D}$ containing $x$ and diffeomorphic to a disc of dimension $n-d$, such that: for every $y\in \mathcal{D}_x$ the curves $\varphi_i$ can be diffeomorphically deformed into an absolutely continuous curve $\widetilde{\varphi_i}:[0,1] \to \mathcal{L}_y$ starting from $y$ and finishing at a point $y_i \in \mathcal{D}$ with length $<\ell$, for every $i=1,\ldots,k$. Now, since all the points $\{x,x_1,\ldots,x_k\}$ are distinct, apart from shrinking $\mathcal{D}_x$, we may suppose that for every $y \in \mathcal{D}_x$, all other points $\{y_1,\ldots,y_k\}$ do not belong to $\mathcal{D}_x$. We now consider $F = E\cap \mathcal{D}_x$. First, note that $\mathcal{L}^{n-d}(F)> 0$ since $x$ is a density point. Moreover, for $y \in F$, we know that $k_y \geq k$ since $y \in \mathcal{D}_x$, and that $k_y \leq k$ since $y \in E$. We conclude easily that every two points of $F$ are not $(\mathcal{F},\ell_0)$-related, finishing the proof.
\end{proof}

As a consequence of Propositions \ref{PROP:FiniteVolume} and \ref{PROP:FiniteVolume2}, we get the following result:

\begin{proposition}\label{PROPrankonesplit}
Every foliation of rank $1$ is splittable. 
\end{proposition}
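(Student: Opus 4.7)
The plan is essentially to read off the result as an immediate corollary of the two preceding propositions, so there is no genuine obstacle to overcome here; the work has already been done upstream. I would begin by recalling the setup: fix a regular analytic foliation $\mathcal{F}$ of rank $1$ on a real-analytic manifold $N$ equipped with a smooth Riemannian metric $h$. Since $\mathcal{F}$ has rank $1$, each leaf is a $1$-dimensional immersed submanifold, and the restriction of $h$ to any leaf $\mathcal{L}_x$ is an arc-length-giving metric on a curve.

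First I would invoke Proposition \ref{PROP:FiniteVolume}. Its rank-$1$ statement says that for every $x \in N$ and every $\ell > 0$ one has the uniform bound
\[
\mbox{vol}^{h,\mathcal{F}}\bigl(\mathcal{L}_x^{\ell}\bigr) \;\leq\; 2\ell,
\]
which is itself almost tautological: an arc-length parametrization of a curve in $\mathcal{L}_x$ of length at most $\ell$ starting at $x$ covers, in forward and backward time together, at most a $1$-dimensional set of measure $2\ell$. In particular, the bound is independent of the base point, so a fortiori it is locally uniform near any $\bar x \in N$. Hence $\mathcal{F}$ has locally horizontal balls with finite volume in the sense of Definition \ref{def:FiniteVolume}.

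Second, I would apply Proposition \ref{PROP:FiniteVolume2}, which promotes the finite-volume property to splittability in $(N,h)$. Concatenating these two implications gives the desired conclusion: every rank-$1$ foliation is splittable. Since both steps are already established in the paper, the entire argument reduces to the citation chain Proposition \ref{PROP:FiniteVolume} $\Rightarrow$ Proposition \ref{PROP:FiniteVolume2} $\Rightarrow$ Proposition \ref{PROPrankonesplit}; there is no nontrivial step left, and the only thing worth emphasizing is that in the rank-$1$ case the hypotheses of Proposition \ref{PROP:FiniteVolume} are met without any completeness assumption on $h$ or any curvature hypothesis on the leaves.
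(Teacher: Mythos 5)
Your proof is correct and follows the same route as the paper, which states Proposition~\ref{PROPrankonesplit} precisely as the concatenation of Propositions~\ref{PROP:FiniteVolume} and~\ref{PROP:FiniteVolume2}. Your additional remark — that the $2\ell$ bound in the rank-$1$ case needs neither completeness of $h$ nor a curvature hypothesis — is an accurate observation about why the first proposition applies unconditionally here.
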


We provide in Section \ref{SECEXfoliation} an example of analytic foliation $\fol$ contained in an analytic manifold with boundary $M$, endowed with a (non-complete) $C^{\infty}$ metric $g$, which is non-splittable. This example illustrates the kind of qualitative behavior that we must exclude when studying the minimal rank Sard Conjecture. Nevertheless, note that the example is constructed on an abstract manifold. We do not know the answer to the following question:

\medskip
\noindent {\bf Open question.} Is there an integrable family of analytic $1$-forms $\Omega = (\omega_1,\ldots,\omega_t)$ defined over an open set $U \subset \mathbb{R}^n$ whose associated analytic foliation $\fol$ defined in $U \setminus \Sigma $, where $\Sigma$ is the singular set of $\Omega$, is non-splittable in $(U,g^0)$ where $g^0$ is the Euclidean metric?

\medskip
If the answer to the above question is negative, then the hypothesis of Theorem \ref{thm:Sardminrank} would always be satisfied provided that $M$ and $\Delta$ are analytic.

\subsection{Example of a non-splittable foliation}\label{SECEXfoliation}

\label{ex:NonSplittable}
We modify a construction of Hirsch \cite{Hirsch74} in order to define a foliation which is non-splittable in a (non-compact) manifold with border $M$. As a matter of fact, Hirsch foliations are two-dimensional analytic foliations which satisfy the topological properties of a non-splittable foliation, but they lack the metric properties. In order to obtain the metric properties, we modify the original construction, and we make use of $C^{\infty}$-partitions of the unit to yield a $C^{\infty}$-metric.

We start by defining the building-blocks. Consider the double cover immersion $f:S^1 \to S^1$ given by $f(t) = 2t$, and choose an analytic embedding $\iota$ of the solid torus $S^1 \times D^2$ onto its interior so that $\pi \circ \iota = f \circ \pi$, where $\pi: S^1 \times D^2 \to S^1$ is the projection. Let $V = S^1 \times D^2 \setminus \mbox{Int}(\iota(S^1 \times D^2))$. Then the boundary of $V$ is two copies of $S^1 \times S^1$, which we denote by $V^{-}$ and $V^{+}$ where $\iota(V^{-}) = V^{+}$. Denote by $\mathcal{G}$ foliation over $V$ induced by the the fibration $\pi$. Note that the leaves of this foliation are topological pants, whose intersection with $V^{-}$ is a $S^1$, and whose intersection with $V^{+}$ is the disjoint union of two $S^1$, cf. figure \ref{sfig:ExHirschBuildingBlock}.

\begin{figure}[H]
\centering
\begin{subfigure}[b]{0.45\textwidth}
\centering
\includegraphics[scale=0.3]{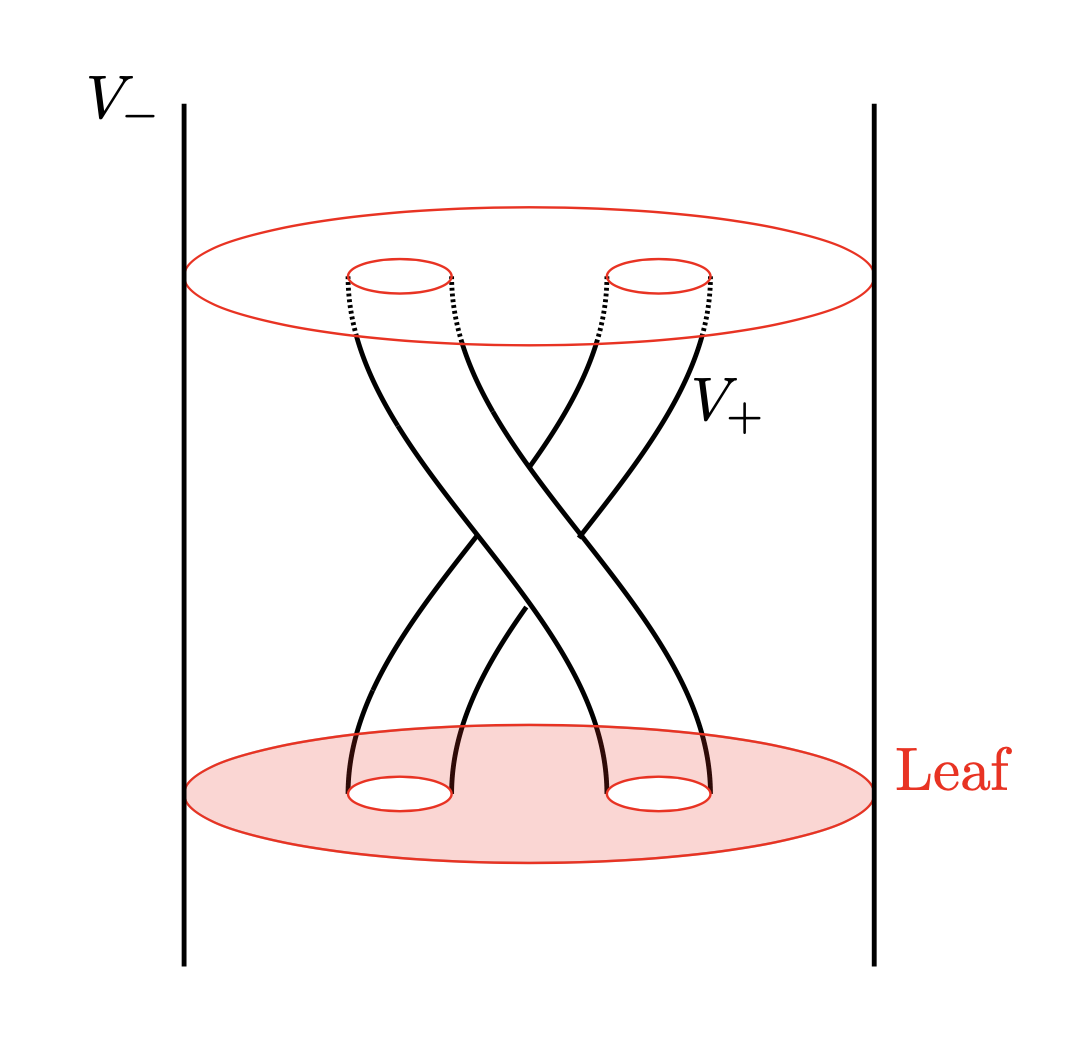}
\caption{The building block $V$.}
\label{sfig:ExHirschBuildingBlock}
\end{subfigure}
\begin{subfigure}[b]{0.45\textwidth}
\centering
\includegraphics[scale=0.3]{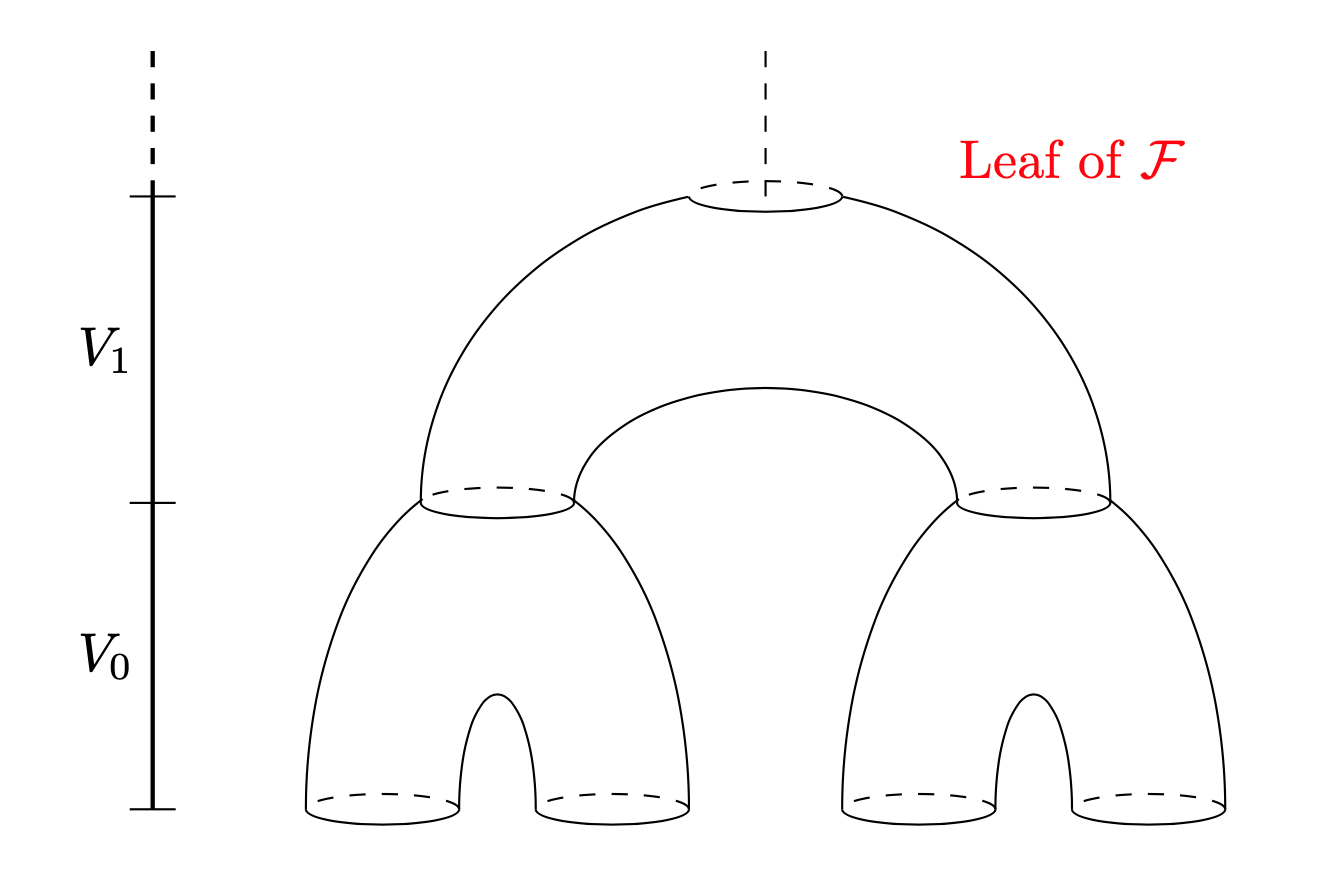}
\caption{A leaf of $\mathcal{F}$.}
\label{sfig:ExHirschLeaf}
\end{subfigure}
\caption{Geometrical illustration of $\mathcal{F}$.}
\label{fig:ExHirsch}
\end{figure}

Now, we consider a countable family of building-blocks $(V_n, V_n^{-}, V_n^{+},\iota_n, \mathcal{G}_n,g_n)$, where $g_n$ are analytic metrics over $V_n$ satisfying the following property: given two points $x$ and $y$ in a leaf $L$ of $\mathcal{G}_n$, the distance of $x$ and $y$ in $L$ is bounded by $4^{-n}$. We denote by $M$ the manifold with boundary given by the union of all $V_n$, by identifying $V_n^{-}$ with $V_{n+1}^{+}$ via $\iota$, that is, we take the identification $x\in V_n^{-}$ equivalent to $\iota(x) \in V_{n+1}^{+}$. This yields an analytic manifold with border, where the border is a torus $M_0 = V_0^{+} = S^1 \times S^1$. This construction induces, furthermore, an analytic foliation $\mathcal{F}$ over the manifold with border $M$ which locally agrees with $\mathcal{G}_n$ over each $V_n$, because $\pi \circ f = \iota \circ\pi$, cf. figure \ref{sfig:ExHirschLeaf}. Furthermore, we can define a globally defined $C^{\infty}$ metric $g$ over $M$ by patching the metrics $g_n$ via partition of the unit. We may chose such a partition so that $g$ satisfies the following property: given two points $x$ and $y$ in a leaf $L$ of $\mathcal{G}_n$, the distance of $x$ and $y$ in $L$ is bounded by $2^{-n}$.

We claim that $\mathcal{F}$ is a non-splittable foliation. Indeed, consider a transverse section $\Sigma = S^1 \subset M_0 = S^1 \times S^1$ and let us identify $\Sigma$ with the interval $[0,1]$. Given a point $x \in \Sigma$, denote by $L_x$ the leaf passing by $x$. First, consider the foliation $\mathcal{G}_0$, and note that, since $f(x) = f(x+1/2)$ and $\mathcal{G}_0$ is a foliation by pants, $x+ 1/2$ also belongs to the leaf $L$, cf. figure \ref{sfig:ExHirschLeaf}. Since this argument can be iterated over any $\mathcal{G}_n$, we get that all points $x + m/2^n$ with $m,n \in \mathbb{N}$ belong to $L_x$. Moreover, the distance on $L_x$ between $x$ and $x+ m/2^n$ is bounded by:
\[
2 \cdot  \sum_{k=0}^n \frac{1}{2^k} < 4
\]
since there exists a path between $x$ and $x+ m/2^n$, contained in the leaf $L_x$, and which is contained in the union of $V_k$ with $k< n$, crossing each of these components at most twice. In conclusion, for every $x\in \Sigma$, the intersection of $L_x$, the leaf passing through $x$, with $\Sigma$ is a countable and dense set of points invariant by a countable subgroup of rotations, which are pairwise $(\mathcal{F},4)$-related. We infer that $\mathcal{F}$ is a not splittable in $(M,g)$ because there is no measurable set $E\subset \Sigma$ with positive Lebesgue measure whose intersection with each $L_x$ (with $x\in \Sigma$) contains only one point.

\section{Witness transverse sections}\label{ssec:witness}

We follow the notation and we use results given in \cite[Section 3.3 and 3.4]{bprfirst}. The main goal of this section is to show the following result:

\begin{theorem}\label{thm:CorWitness}
Let $N$ be a real-analytic manifold of dimension $d\geq 1$ equipped with a complete smooth Riemannian metric $g$, $\Omega$ be a family (or, more generally, a sheaf) of analytic $1$-forms which is integrable of generic corank $r$, with singular set $\Sigma$.  
Denote by $\dist$ the distribution associated to $\Omega$ and by $\fol$ the foliation  on $N\setminus \Sigma$ associated to $\Omega$. Let $x \in N$ and $\ell>0$ be fixed. Then, there exist a relatively compact open neighborhood $V$ of $x$ in $N$, a real-analytic function $h: V \to [0,\infty)$, a \ssanalytic  set $X\subset V\setminus \Sigma$ and $C, \epsilon >0$ such that the following properties are satisfied:
\begin{itemize}
\item[(i)] 
The set $h^{-1}(0)$ is equal to $\Sigma \cap V$.
\item[(ii)] 
(uniform volume bound)
$\dim X\le r+1$ and for every $0<c<\epsilon$ the \ssanalytic set $X^c:=X\cap h^{-1}(c)$ satisfies $\dim X^c \le r$ and its $r$-dimensional volume with respect to $g$ is 
bounded by $C$.

\item[(iii)] 
(uniform intrinsic distance bound)
For every $0<c<\epsilon$ and for every $\pa \in h^{-1}(c) \subset  V \setminus \Sigma$, there is a smooth curve $\alpha :[0,1] \rightarrow V\setminus \Sigma$  contained in $\mathcal{L}_{\pa} \cap h^{-1}(c)$, where $\mathcal{L}_{\pa}$ is the leaf of $\fol$ containing $\pa$, such that
\[
\alpha(0)=\pa, \quad \alpha(1)\in X^c, \quad \mbox{and} \quad \mbox{\em length}^{{g}}(\alpha) 
\leq \ell.
\]
\item[(iv)] 
(generic tranversality) 
There is a decomposition $X=Y\sqcup Z$ as the disjoint union of two \ssanalytic sets $Y,Z$ such that: $\dim Z\le r$, 
$Y= \bigsqcup _{i\in I} Y_i$ is a finite disjoint union of smooth \ssanalytic sets $Y_i$ of dimension $r+1$, and, moreover, for every $0<c<\epsilon$, $Z^c:=Z\cap h^{-1}(c)$ is of dimension $<r$, 
$Y_i^c:=Y_i\cap h^{-1}(c)$ is smooth of dimension $r$ such that   
\[
\partial Y^c = \overline{Y^c} \setminus Y^c \subset Z^c \quad \text{ and } \quad  T_{\pa} N = T_{\pa}Y_i^c + \dist (\pa) \qquad \forall \pa \in Y_i^c, \, \forall i \in I.
\]
\end{itemize}
\end{theorem}

First we show the following general theorem on the existence of a transverse section, that is of independent interest for foliation theory. Then Theorem \ref{thm:CorWitness} will be a corollary of this result.

\begin{theorem}\label{thm:Witness}
Let $N$ be a real-analytic manifold of dimension $d\geq 1$ equipped with a complete smooth Riemannian metric $g$, $\Omega$ be a family (or, more generally, a sheaf) of analytic $1$-forms which is integrable of generic corank $r$, with singular set $\Sigma$. Denote by $\dist$ the distribution associated to $\Omega$, and by $\fol$ the foliation on $N\setminus \Sigma$ associated to $\Omega$. Then, for every $x\in N$ there exist a relatively compact open \ssanalytic neighborhood $V$ of 
$x$ in $N$, a \ssanalytic set $X\subset V\setminus \Sigma$, called witness transverse section, such that the following properties are satisfied:
\begin{itemize}
\item[(i)] 
For every $z\in V \setminus \Sigma$ there is a smooth curve $\alpha :[0,1] \rightarrow  V\setminus \Sigma$ contained in a leaf of $\fol$ such that 
\[
\alpha(0)\in X, \quad \alpha(1)=z  \quad \mbox{and} \quad \mbox{\em length}^g(\alpha) \leq C_d  \, 
\mbox{\em diam}^g(V), 
\]
where $C_d$ is a constant depending only on $d$. 
\item[(ii)] 
$X$ is the disjoint union of finitely many locally closed smooth \ssanalytic 
sets $X= \bigcup_i X_i$ of dimension at most $r$ such that 
for every $\pa \in X_i$ we have $\dist(\pa) \cap T_{\pa}X_i = \{0\}$. In particular, if  $Y$ is the union of $X_i$ of maximal dimension $r$ 
and $Z$ the union of those of dimension $< r$ then $X=Y\sqcup Z$ and $Y$ is
 transverse to the leaves of $\fol$.  
 \end{itemize}
\end{theorem}

We may assume, without loss of generality, that the metric $g$ is real-analytic (or even Euclidean with respect to a fixed local coordinate system). Indeed, it is enough to show  the statement of Theorem \ref{thm:Witness} locally at $x$, and any $C^{\infty}$ metric $g$ is locally bi-Lipschitz equivalent to the Euclidean metric and, moreover, the bi-Lipschitz constant may be taken arbitrarily close to $1$.

Given a small open neighborhood $V$ of $x\in N$ there is a finite family of analytic functions on $V$, $\mathcal G=\{g_i\}$, such that: 
\begin{enumerate}
\item [(i)] each $g_i$ is Lipschitz with constant $2$ (with respect to the geodesic distance $d^g$),  
\item [(ii)]
for every  $x\in V$, for every smooth submanifold $M\subset V$ and  every vector 
$v\in T_N$ there is an index $i$ such that
\begin{enumerate}
	\item [(1)]
	$|\nabla (g_{i|M})(x)|\ge 1/2$,
	\item [(2)]
	$\langle \nabla (g_{i|M})(x), v\rangle \ge 0$.
	\end{enumerate}
\end{enumerate}
Indeed, first note that it suffices to show that existence of the family $\{g_i\}$ satisfying 
$(i)$ and $(1)$ of $(ii)$.  The conditions $(2)$ of $(ii)$ can be obtained by adding to the family $\{g_i\}$ all the opposite functions $\{- g_i\}$.  
To get $(i)$ and $(1)$ of $(ii)$, by the preceding remark, it suffices to consider only the case when $N= \mathbb{R}^n$ and $g$ is the Euclidean metric. In this case, let us identify $\mathbb{S}^{n-1}$ with the vectors $v \in\mathbb{R}^n$ of norm one. For each fixed $w\in \mathbb{S}^{n-1}$ there exists an open neighborhood $U_w$ of $w$ in $\mathbb{S}^{n-1}$ such that, for all $v\in U_w$, $\langle w, v\rangle > 1/2$. By the compactness of $\mathbb{S}^{n-1}$, consider a finite family $w_i$ such that $U_{w_i}$ covers $\mathbb{S}^{n-1}$, and take as $\mathcal{G}$ the family of functions $g_{i}(x) =  \langle w, x\rangle$. Since $|\nabla g_{i}(x)| =|w_i|=1$, the family clearly satisfies $(i)$. Next, fixed a submanifold $M$ and a vector $v \in T_xM$ of norm $1$, there exists $i_0$ such that $|\nabla (g_{i_0|M})(x)| \ge |\langle \nabla g_{i_0}(x),v \rangle| =|\langle w_{i_0},v \rangle | \ge 1/2$ by construction, proving $(ii)$.

\begin{remark}
There is a family $\mathcal G=\{g_i\}$ of functions defined on the entire $N$ that satisfies the above properties $(i)$ and $(ii)$ at every point $x\in N$. Indeed, one may show it first in the class of $C^1$ functions and then approximate them by real analytic ones in Whitney $C^1$-topology, see e.g. \cite{gg73}. Therefore, in this case,  a stronger version of Theorem \ref{thm:Witness} holds, where we may take $V = N$ and replace $\mbox{\em diam}^g(V)$ by an arbitrary constant $D>0$. We do not need this stronger result in this paper. 
\end{remark}

We now consider an auxiliary locally closed nonsingular \ssanalytic subset $W$ of $N\setminus \Sigma$. Note that we do not fix its dimension; in fact, we will make inductive arguments based on its dimension. Later on, in the proof of Theorem \ref{thm:Witness}, $W$ will be taken as a connected component of $V\setminus \Sigma$. Following the notion introduced in \cite[Section 3]{bprfirst} we say that is $\dist$ \emph{is regular on ${W}$} if the restriction of $\dist$ to ${W}$ is a regular analytic distribution and $\dist$ has constant rank along ${W}$. We denote by $\dist_{W}$ this restriction and by $r_{W}$ its corank.  By \cite[Remark 3.11]{bprfirst}, $r_{W}\le r$,  $\dist_{W}$ is integrable and induces a foliation that we denote by $\fol_{W}$. We start by studying what happens when $\dist$ is regular on $W$:

\begin{lemma}\label{lem:existenceoftheset1}
Following the above notation, suppose that $\dist$ is regular on $W$ and $r_{W} < \dim {W}$.
Then there exists a \ssanalytic (as a subset of $N$) subset $Y_{W}\subset {W}$ of dimension $< \dim {W} $ such that 
for every $z\in {W}$ there is a smooth curve $\alpha :[0,1] \rightarrow {W}$, contained entirely in a leaf of $\fol_{W}$ such that 
\[
\alpha(0)\in Y_{W}, \quad \alpha(1)=z  \quad \mbox{and} \quad \mbox{\em length}^g(\alpha) \leq 4 \, 
\mbox{\em dist}^ g (z, \alpha(0)).
\] 
\end{lemma} 

\begin{proof}
We work locally in a relatively compact neighborhood $V$ of $x\in \overline {W}$. 
Let $f$ be a $C^2$ subanalytic function such that $f^{-1} (0) = (\overline {W} \setminus {W}) \cup 
(\overline V \cap {W} \setminus V) $.  
Such a function, even a function of class $C^p$ for any fixed finite $p$, always exists.  
It follows from a more general result valid in any o-minimal structure, see Theorem C11 of 
\cite{dm96}.  The subanalytic case, that we use here, was proven first by Bierstone, Milman and Paw{\l}ucki (unpublished).  By replacing $f$ by $f^2$ we may suppose $f\ge 0$. Fix a family $\mathcal{G}= \{g_i\}$ of analytic functions as above and define 
\begin{multline*}
 Y_i:=  \mbox{Bd}_{W} \left( \left\{z\in {W} \, ; \, |\nabla (g_{i|\fol_{W}})(z)|=  1/2 \right\} \right) \\ 
 \cup 
 \mbox{ Bd}_{W} \left(  \left\{z\in {W} \, ; \, 
 |\langle \nabla (f_{|\fol_{W}})(x), \nabla (g_{i|\fol_{W}})(x)\rangle = 0\right\} \right),
\end{multline*}
 where by $\mbox{Bd}_{W}$ we mean the topological boundary in ${W}$.  
Here by $\nabla (f_{|\fol_{W}})(z)$ we mean the gradient of the function: $f$ restricted to the leaf of $\fol_{W}$ through $z$. These leaves are of dimension $\ge 1$ by the assumption $r_{W} < \dim {W}$.  

The sets $Y_i$ are \ssanalytic (the Riemannian metric $g$ is assumed real-analytic) and of dimension $ < \dim {W}$. 
Then we take as $Y_W$ the union of all $ Y_i$.

Let $z\in ({W}\setminus Y_{W})\cap V$ be fixed. By the above property (ii), there is $i$ such that 
\[
|\nabla (g_{i|F})(z)|\ge 1/2 \quad \mbox{and} \quad \langle \nabla (f_{|F})(z), \nabla (g_{i|F})(z)\rangle \ge 0,
\]
where $F$ is the leaf of $\fol_{W}$ containing $z$. Let $\beta:[0,t_0)\to {W}\setminus Y_{W}$ be the maximal integral curve of $\nabla (g_{i|F})$ with $\beta(0)=z$ (the curve $\alpha$ from the Lemma will be later taken as a reparametrization of $\beta$).  It is of finite length.  Indeed, for any $t_1\in [0,t_0)$, we have (note that by construction of $Y_{W}$, $|\nabla (g_{i|F})(\beta (t)|\geq 1/2$ for all $t\in [0,t_0)$)
\begin{multline}\label{eq:boundbylength}
g_i(\beta (t_1)) - g_i (z) =  \int_0^{t_1} \left|\nabla (g_{i|F})(\beta (t)\right|^ 2 \, dt \\
\ge 
\frac{1}{2}  \int_0^{t_1} \left|\nabla (g_{i|F})(\beta (t)\right| \, dt = \frac{1}{2} \, \mbox{length}^g (\beta([0,t_1]).
\end{multline}
Therefore, since $g_{i|V}$ is bounded, $t_0$ is finite and  $\lim_{t\to t_0} \beta (t)$ exists.  We denote it by $\beta(t_0)$.  Because $f(\beta (t))$ is not decreasing it is not possible that $\beta(t_0)\in (\overline {W} \setminus {W}) \cup 
(\overline V \cap {W} \setminus V)$ and therefore  $\beta(t_0) \in Y_{W}$.  
 Moreover, since $g_i$ is $2$-Lipschitz, \eqref{eq:boundbylength} yields
$$
\mbox{length}^g (\beta) \le 2 \left(g_i(\beta (t_0) ) - g_i (z)\right)  \le 4 \,d^ g (\beta (t_0), z ) . 
$$
The curve $\beta$ is analytic except, maybe, at $t_0$. In this case we reparametrize it to obtain a smooth curve $\alpha$ satisfying the statement. 
\end{proof}

\begin{remark}
Lemma \ref{lem:existenceoftheset1} implies that every leaf of $\fol$ intersecting ${W}$ intersects $Y_{W}$.  
A similar result was shown in the definable set-up in \cite{speiss99} under an additional assumption that the leaves of $\fol$ are Rolle, see \cite[Proposition 2.2]{speiss99}.  
This extra assumption implies that the leaves are locally closed that is not the case in general.
\end{remark}

More generally, recall that we say that a stratification $\mathcal{S} = (S_{\alpha})$ of ${W}$ is compatible with the distribution $\dist$ if for every stratum $S$ of $\mathcal{S}$, $\dist $ is regular on $S$.  In this case, for every stratum $S$, denote the restriction of $\dist$ to $S$ by $\dist_S$, and its corank, which is constant on $S$, by $r_S$.

\begin{proposition}\label{prop:existenceoftheset}
Let ${W}$ be a locally closed relatively compact nonsingular \ssanalytic subset of $N\setminus \Sigma$. There exists a \ssanalytic
stratification $\mathcal{S}$ of ${W}$, compatible with $\dist$,  satisfying the following property: let $X_0$ denote the union of all strata $S'$ of $\mathcal{S}$ for which $\dim S'=r_{S'}$ (i.e. the leaves of $\fol_{S'}$ are points), then for every $z\in {W}$ there is a smooth curve $\alpha :[0,1] \rightarrow  {W}\setminus \Sigma$, contained in the leaf of $\mathcal{F}$ through $z$ such that 
\[
\alpha(0)\in X_{0}, \quad \alpha(1)=z  \quad \mbox{and} \quad \mbox{\em length}^g(\alpha) \leq C_d \, 
\, \mbox{\em diam}^g ({W}).
\]
\end{proposition}

Note that a refinement of a stratification satisfying the conclusion of the above proposition also satisfies this conclusion.
Therefore, if we want this stratification to satisfy additional properties, to be Whitney for instance, we replace it by its  refinement.  

\begin{proof}
Induction on $\dim W$.
Let $\mathcal{S}$ be a \ssanalytic stratification of $W$ compatible with $\dist$. The cases of $\dim {W}=0$ or $r_S = \dim S$ for all strata $S$ are obvious. Therefore we may assume that there is a stratum $S$ such that $r_S < \dim S$ and then we apply Lemma \ref{lem:existenceoftheset1} to $S$. Let $Y_S$ be the set given by this lemma, which has dimension smaller than $\dim W$.  We stratify $Y_S$ (it is not necessarily nonsingular) and apply the inductive assumption to every stratum. We repeat this procedure for all strata $S$ such that $r_{S}< \dim S$. The obtained stratification satisfies the statement for $S$.

Indeed, let us prove the  last property, namely the bound on the length of $\alpha$. Let $z\in S$.  By Lemma \ref{lem:existenceoftheset1} we may connect $z$ and a point of 
$y\in Y_S$ by an arc in a leaf of $\fol$ of length $\le 4 \mbox{diam}^g (S)$.  The point $y$ belongs to a stratum of smaller dimension and we may use to it the inductive assumption. So finally we may connect $z$ to a point of $X_{0}$ by an arc of $\le 4 ^{\dim S} \mbox{ diam}^g (S)$. Since every leaf of $\mathcal{F}$ is smooth, and this arc has at most $d$ non-smooth points, we can reparameterize it by an everywhere smooth arc without increasing its length.  
It shows that we may choose $C_d = 4^d$. 
\end{proof}


\subsubsection{Proof of Theorem \ref{thm:Witness}.}

Let $V$ be a \ssanalytic open relatively compact connected subset of $N$.   
 Let $X_{0}\subset \overline V \setminus \Sigma$ be the set given by Proposition \ref{prop:existenceoftheset} for ${W}=V\setminus \Sigma$. Then, $X=X_0$ satisfies (i) of theorem. The condition (ii) of the theorem follows directly from the property that the leaves of $\fol_{S'}$ are points, for all strata $S'$ contained in $X_0$. 
\qed

\subsubsection{Proof of Theorem \ref{thm:CorWitness}.}
 
Let $h$ be an analytic function defined in a neighborhood of $x$ such that $h^{-1} (0) = \Sigma$. Denote the distribution defined by $dh$ and 
$\omega_i, i\in I$, by ${\disth} $.  Its singular locus equals 
$\Sigma_1=\Sigma \cup \Sigma_h$, 
where 
$$\Sigma_h= \left\{x\in V\setminus \Sigma; dh (x) \in \disth ^{\perp}\right\}, $$
and  ${\disth} $ is integrable of corank $r+1$ in its complement. We denote the induced foliation by $\folh$.  Apply Theorem \ref{thm:Witness} to $\fol_h$ and denote the set satisfying its statement by $X_1$. 

Next, consider the leaves of the foliation induced by $\fol$ on $\Sigma_h$, more precisely we stratify $\Sigma_h$ by a stratification regular with respect to $\dist$.  Note that $h$ is constant on the leaves of this foliation.  We apply to the strata of this stratification 
Proposition \ref{prop:existenceoftheset}. Let $S$ be a stratum from the conclusion of 
Proposition \ref{prop:existenceoftheset}.  It is of dimension $\dim S= r_S\le r$. It is clear that the union of such sets and $X_1$ satisfies the claim of the theorem except (ii) and (iv). 

The point (ii) follows for $c$ small from a general result, the local uniform bound of 
the volume of relatively compact subanalytic sets in subanalytic families, see e.g. \cite[page 261]{hardt82} or  \cite[Th\'eor\`eme 1]{lr98}.  

The transversality of point (iv) follows from (ii) of Theorem \ref{thm:Witness} and the subanalytic Sard theorem applied to the function $h$ restricted to the sets $Y_i$.  The set of critical values, being subanalytic and of measure zero has to be finite.  We choose $\epsilon$ 
smaller that the smallest positive critical value. To have the condition $\partial Y^c = \overline{Y^c} \setminus Y^c \subset Z^c$ we just add $\overline Y\setminus Y$ to Z.  
\qed

\section{Proof of Theorem \ref{thm:Sardminrank}}\label{SECSardminrank}
We follow the notation and we use several results given in \cite[Section 3]{bprfirst}. In particular, recall that the cotagent bundle $T^{\ast}M$ is equipped with a canonical symplectic form $\omega$.

Assume that $M$ (of dimension $n\geq 3$) and $\Delta$ (of rank $m\geq 2$) are analytic and suppose for the sake of contradiction that there is $\bar{x}\in M$ such that the set 
$$
 \mbox{Abn}^{m}_{\Delta}(\bar{x}) = \Bigl\{ \gamma(1) \, \vert \, \gamma \in \Omega_{\Delta}^{\bar{x}} \mbox{ s.t. } \mbox{rank}^{\Delta} (\gamma)=m\Bigr\}
$$
has positive Lebesgue measure in $M$. We equip $M$ with a complete smooth Riemannian metric $g$. Let us now recall the setting provided by \cite[Theorem 1.1]{bprfirst}: there exist a subanalytic Whitney stratification $\mathcal{S}=(\mathcal{S}_{\alpha})$ of $\DeltaPerp$, three subanalytic distributions 
\[
\vec{\mathcal{K}} \subset \vec{\mathcal{J}} \subset \vec{\mathcal{I}} \subset T\DeltaPerp
\]
adapted to $\mathcal{S}$ satisfying properties (i)-(iv). Then, denoting by $\mathcal{S}_0$ the essential domain, that is the union of all strata of $\mathcal{S}$ of maximal dimension, and by $\Sigma$ its complement in $\DeltaPerp$ of dimension strictly less than $2n-m=\dim (\DeltaPerp)$, \cite[Theorem 1.1]{bprfirst} implies that $\mathcal{S}_0$ is an open set in $\DeltaPerp$, $\Sigma$ is an analytic set in $\DeltaPerp$ of codimension at least 1, and $\vec{\mathcal{K}}_{\vert \mathcal{S}_0}= \vec{\mathcal{J}}_{\vert \mathcal{S}_0}=\vec{\mathcal{I}}_{\vert \mathcal{S}_0}$ is isotropic and integrable on $\mathcal{S}_0$ of rank $m_0$ verifying $m_0 \equiv m \, (2)$ and $m_0\leq m-2$. Note, furthermore, that \cite[Proposition 3.6]{bprfirst} combined with the contradiction assumption implies that $m_0>0$, that is, the distribution $\vec{\mathcal{K}}$ yields a non-trivial foliation over $\mathcal{S}_0$ (in particular, $n\geq 4$ and $m \geq 3$). For every $\pa \in \mathcal{S}_{0}$, we denote by $\mathcal{L}_{\pa}\subset \mathcal{S}_{0}$ the leaf of the foliation generated by $\vec{\mathcal{K}}_{\vert \mathcal{S}_0}$ containing $\pa$. 

We start by considering a subset of $\mbox{Abn}^{m}_{\Delta}(\bar{x})$ of positive measure with two extra properties (recall that we have supposed for contradiction that $\mbox{Abn}^{m}_{\Delta}(\bar{x})$ has positive Lebesgue measure in $M$):

\begin{lemma}\label{lem:FirstReduction}
There exist $\bar{\ell} >0$ and a subset $\bar{\mathcal{A}}\subset M$ of positive measure such that, for every point $y\in  \bar{\mathcal{A}}$, the intersection $\pi^{-1}(y) \cap \mathcal{S}_0 \neq \emptyset$ and there exists a singular horizontal curve of minimal rank of length $\leq \bar{\ell}$ (w.r.t $g$) which joins $\bar{x}$ to $y$, for which all abnormal lifts intersect the set $\Sigma$.
\end{lemma}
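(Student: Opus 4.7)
The plan is to construct $\bar{\mathcal{A}}$ by removing from $\mbox{Abn}^m_{\Delta}(\bar x)$ three sets of Lebesgue measure zero: a long-curve set (for length truncation), a bad-fiber set (to ensure the fiber meets $\mathcal{S}_0$), and the set $\mbox{Abn}_0(\bar x)$ of endpoints reached by some abnormal lift entirely trapped in $\mathcal{S}_0$. The force of the argument comes from Theorem \ref{thm:SardStrata}: it forces this last set to have measure zero, and its removal automatically pushes every remaining curve's abnormal lifts into $\Sigma$.

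\textbf{Steps 1--2 (length truncation and fiber removal).} First, write $\mbox{Abn}^m_{\Delta}(\bar x) = \bigcup_{\ell \in \mathbb{N}} \mbox{Abn}^{m,\ell}_{\Delta}(\bar x)$, where $\mbox{Abn}^{m,\ell}_{\Delta}(\bar x)$ denotes the subset of endpoints reachable by a minimal-rank singular curve of $g$-length $\leq \ell$. Each of these is the image under $\mbox{E}^{\bar x}$ of a Borel subset of $\mathcal{U}^{\bar x}$ (cut out by the rank condition on $d\mbox{E}^{\bar x}$ and an $L^2$-bound encoding the length), hence is an analytic and universally measurable set; countable subadditivity then yields some $\bar{\ell} > 0$ with $\mathcal{A}_0 := \mbox{Abn}^{m,\bar{\ell}}_{\Delta}(\bar x)$ of positive measure. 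Next, set $W := \{y \in M : \DeltaPerp_y \subset \Sigma\}$. Since $\mathcal{S}_0$ is open and dense in $\DeltaPerp$, the closed analytic set $\Sigma$ has dimension strictly less than $2n-m$ and therefore Hausdorff measure zero in $\DeltaPerp$; by Fubini along the vector bundle $\pi: \DeltaPerp \to M$, for Lebesgue-a.e.~$y$ the fiber $\Sigma \cap \DeltaPerp_y$ has $(n-m)$-dimensional Lebesgue measure zero, so does not exhaust $\DeltaPerp_y$. Thus $W$ is Lebesgue-negligible in $M$.

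\textbf{Step 3 (conclusion via Theorem \ref{thm:SardStrata}).} Theorem \ref{thm:SardStrata} applied to the essential stratum $\mathcal{S}_0$ yields that
\[
\mbox{Abn}_0(\bar x) = \bigcup_{\pa \in (\mathcal{S}_0)_{\bar x}} \pi(\mathcal{L}_{\pa})
\]
has Lebesgue measure zero. Define $\bar{\mathcal{A}} := \mathcal{A}_0 \setminus \bigl( W \cup \mbox{Abn}_0(\bar x) \bigr)$, which still has positive measure. For $y \in \bar{\mathcal{A}}$: since $y \notin W$, we have $\pi^{-1}(y) \cap \mathcal{S}_0 \neq \emptyset$; since $y \in \mathcal{A}_0$, there is a minimal-rank singular curve $\gamma$ of length $\leq \bar{\ell}$ joining $\bar x$ to $y$. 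Suppose for contradiction that some abnormal lift $\psi$ of $\gamma$ remained entirely in $\mathcal{S}_0$: by Theorem \ref{thm:SubanalyticDistribution}(ii)--(iii), $\psi$ would then be an absolutely continuous horizontal curve of the integrable distribution $\vec{\mathcal{K}}_{|\mathcal{S}_0}$, hence confined to the single leaf $\mathcal{L}_{\psi(0)}$ with $\psi(0) \in (\mathcal{S}_0)_{\bar x}$, so $y = \pi(\psi(1)) \in \pi(\mathcal{L}_{\psi(0)}) \subset \mbox{Abn}_0(\bar x)$, contradicting $y \notin \mbox{Abn}_0(\bar x)$. Therefore every abnormal lift of $\gamma$ must intersect $\Sigma$.

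\textbf{Obstacle.} The whole argument is essentially routine once Theorem \ref{thm:SardStrata} is available; the only mild care needed is to verify the measurability of $\mathcal{A}_0$ (standard descriptive set theory applied to the smooth End-Point map) and to invoke the classical fact that absolutely continuous integral curves of a constant-rank integrable distribution remain in their starting leaf. No deeper obstacle arises at this stage---the real work has been concentrated in Theorem \ref{thm:SardStrata}.
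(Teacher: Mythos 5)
Your proof is correct and takes essentially the same approach as the paper: both remove from $\mbox{Abn}^m_\Delta(\bar x)$ the (measure-zero) set of endpoints reached by abnormal lifts trapped in $\mathcal{S}_0$ via Theorem~\ref{thm:SardStrata}, remove the (measure-zero) set of points whose $\DeltaPerp$-fiber lies entirely in $\Sigma$, and truncate in length by countable exhaustion. The only cosmetic difference is that you delete all of $\mbox{Abn}_0(\bar x)$ rather than the possibly smaller set of endpoints the paper calls $\mathcal{A}_{\bar x}^{\mathcal{S}_0}$, which is harmless.
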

\begin{proof}[Proof of Lemma \ref{lem:FirstReduction}]
Denote by $\mathcal{A}_{\bar{x}}^{\mathcal{S}_0}$ the set of points $y$ in $\mbox{Abn}^{m}_{\Delta}(\bar{x})$ for which there is a curve $\gamma \in \Omega_{\Delta}^{\bar{x}}$ of minimal rank with $\gamma(1)=y$ which admits an abnormal lift $\psi:[0,1] \rightarrow \DeltaPerp$ such that $\psi([0,1])\subset \mathcal{S}_0$. By construction, the set $\mathcal{A}_{\bar{x}}^{\mathcal{S}_0}$ is contained in the set 
\[
\mbox{ Abn}_{0}(\bar{x}):=  \bigcup_{\pa \in (\mathcal{S}_{0})_{\bar{x}}} \pi \left( \mathcal{L}_{\pa}\right),
\]
so by \cite[Theorem 1.3]{bprfirst} it has Lebesgue measure zero in $M$. We set 
\(
\mathcal{A}_{\bar{x}} := \mbox{Abn}^{m}_{\Delta}(\bar{x}) \setminus \mathcal{A}_{\bar{x}}^{\mathcal{S}_0}
\)
and note that, without loss of generality, we may assume that $\mathcal{A}_{\bar{x}}$ has positive measure in $M$ and that there is $\bar{\ell}>0$ such that for every $y\in \mathcal{A}_{\bar{x}}$ there is a singular horizontal curve of minimal rank of length $\leq \bar{\ell}$ (w.r.t $g$) which joins $\bar{x}$ to $y$ for which all abnormal lifts intersect the set $\Sigma$. Next, recall that $\pi:T^*M\rightarrow M$ denotes the canonical projection and set
\[
\mathcal{A}_{\bar{x}}^{\Sigma} :=\left\{ y \in \mathcal{A}_{\bar{x}} \, \vert \, \pi^{-1}(y) \cap \DeltaPerp \subset \Sigma \right\}.
\]
We observe that the set $\mathcal{A}_{\bar{x}}^{\Sigma}\subset M$ has Lebesgue measure zero in $M$ since otherwise $\Sigma$, which is an analytic set in $\DeltaPerp$ of codimension at least 1 would have positive measure in $\DeltaPerp$. Then, we set
\[
\bar{\mathcal{A}}:=\mathcal{A}_{\bar{x}} \setminus \mathcal{A}^{\Sigma}_{\bar{x}} \subset M,
\]
which by construction has positive Lebesgue measure in $M$.
\end{proof}

We now make a short interlude to introduce three objects which are going to be used in the proof, namely a complete Riemannian metric $\tilde{g}$ over $\DeltaPerp$, locally defined $\vec{\mathcal{K}}_{|\mathcal{S}_0}$-normal forms and transition maps, and a $\vec{\mathcal{K}}_{|\mathcal{S}_0}$-transverse measure.

\medskip
\noindent
\textbf{The metric $\tilde{g}$ over $\DeltaPerp$:} we can extend the Riemannian metric $g$ over $M$ into a complete smooth metric $\tilde{g}$ ``compatible" with $g$ over $\vec{\Delta}$ on $\DeltaPerp$. As a matter of fact, we can define for every $\pa \in \DeltaPerp$,
$$
\tilde{g}_{\pa}(\xi_1,\xi_2) := g_{\pi(\pa)}(d_{\pa}\pi(\xi_1),d_{\pa}\pi(\xi_2)) \qquad \forall \xi_1, \xi_2 \in \vec{\Delta}(\pa),
$$
which is nondegenerate because $\vec{\Delta}$ is always transverse to the vertical fiber of the canonical projection $\pi: T^{\ast}M \to M$, c.f. \cite[Section 3.2]{bprfirst}, and extend $\tilde{g}$ to the missing directions to obtain a complete smooth Riemannian metric on $\DeltaPerp$. In the sequel, we denote by $|\cdot|^{\tilde{g}}$ the norm given by $\tilde{g}$ and by $d^{\tilde{g}}$ the geodesic distance with respect to $\tilde{g}$. Then, we denote by $\mbox{length}^{\tilde{g}}$ the length of an absolutely continuous curve $\psi: [0,1] \rightarrow \DeltaPerp$ with respect to $\tilde{g}$ and note that if $\psi$ is a lift of a singular horizontal path $\gamma:[0,1] \rightarrow M$ then
\[
\mbox{length}^{\tilde{g}}(\psi) = \mbox{length}^g(\gamma).
\]

\medskip
\noindent
\textbf{Local normal form and transition map:} Fix a density point $\bar{y} \in \bar{\mathcal{A}} \setminus \{\bar{x}\}$ together with some $\bar{\pa}\in \mathcal{S}_0$ such that $\pi(\bar{\pa})=\bar{y}$. By considering a local set of coordinates in an open neighborhood $\mathcal{U}\subset M$ of $\bar{y}$, we may assume that we have coordinates $(y,q)$ in $T^*\mathcal{U} = \mathcal{U} \times (\R^n)^*$ in such a way that the restriction of $\pi$ to $T^*\mathcal{U} $ is given by $\pi(y,q)=y$ for all $(y,q)\in T^*\mathcal{U}$. Then, we let $\bar{q}\in T_{\bar{y}}^*\mathcal{U}$ such that $\bar{\pa}=(\bar{y},\bar{q})$, we set 
\(
r:=2n-m-m_0,
\)
and, since $\vec{\mathcal{K}}$ defines a foliation of dimension $m_0>0$ in $\mathcal{S}_0$, as noted in the beginning of the section, we may consider a foliation chart $(\mathcal{W},\varphi)$ of $\bar{\pa}$ such that $\bar{\pa}\in \mathcal{W} \subset \mathcal{S}_0\cap T^*\mathcal{U}$ and for which there are two open sets $W^{1} \subset \R^{m_0}$ and $W^{2} \subset \R^r$ such that $\varphi =(\varphi_1,\varphi_2): \mathcal{W}  \rightarrow  W:=W^{1} \times W^{2}$
is an analytic diffeomorphism satisfying $\bar{a}:=\varphi(\bar{\pa})=0$ and 
\begin{eqnarray}\label{EQ22fev1}
d_{\pa}\varphi \bigl( \vec{\mathcal{K}}(\pa)\bigr) = \vec{K} := \R^{m_0} \times \{0\} \qquad \forall \pa \in \mathcal{W}.
\end{eqnarray}
We note that, by construction, for every $a=(a_1,a_2) \in W$, the plaque $\varphi^{-1}(W^{1}\times \{a_2\})$ is contained in the leaf $\mathcal{L}_{\varphi^{-1}(a)}$ of $\vec{\mathcal{K}}$ in $\mathcal{S}_0$. We also consider a family of local disjoint transverse sections to $\vec{\mathcal{K}}$ in $\mathcal{W}$ parametrized by the connected component of $\mathcal{L}_{\bar{\pa}} \cap \mathcal{W}$ containing $\bar{\pa}$ and given by (see Figure \ref{fig1})
\[
\mathcal{T}_{\pa} := \varphi^{-1} \left( \bigl\{\varphi_1(\pa)\bigr\} \times W^{2}\right) \qquad \forall \pa \in \mathcal{L}_{\bar{\pa}} \cap \mathcal{W}.
\]

\begin{figure}
\begin{center}
\includegraphics[width=9cm]{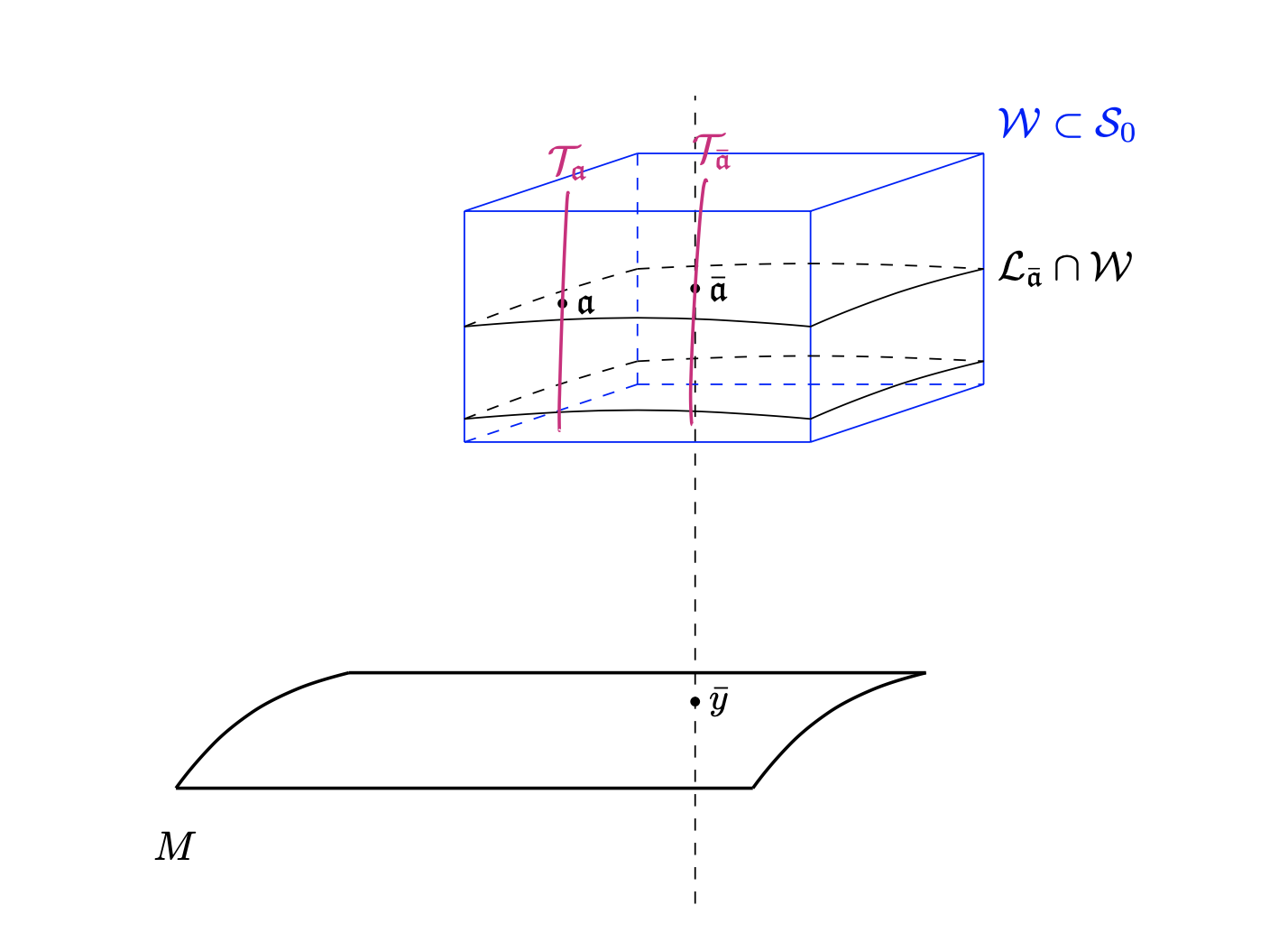}
\caption{Local foliation chart and transverse sections \label{fig1}}
\end{center}
\end{figure}

Up to shrinking $\mathcal{W}$, this family of sections allows us to define a local transition maps parametrized by the connected component of $\mathcal{L}_{\bar{\pa}} \cap \mathcal{W}$ containing $\bar{\pa}$, that is, diffeomorphisms $T^{\bar{\pa},\pa}: \mathcal{T}_{\bar{\pa}} \to \mathcal{T}_{\pa}$ for all $\pa \in \mathcal{L}_{\bar{\pa}} \cap \mathcal{W}$ defined by
\begin{eqnarray}\label{24mars5}
T^{\bar{\pa},\pa} (\pb) := \varphi^{-1} \Bigl( \left\{\varphi_1(\pa)\right\} \times \pi_2\left( \varphi(\pb)\right) \Bigr) \qquad \forall \pb \in \mathcal{T}_{\bar{\pa}}
\end{eqnarray}
Given a subset $\Gamma^{\bar{\pa}}$ of $\mathcal{T}_{\bar{\pa}}$, we will sometimes abuse notation and write
\[
\Gamma^{\bar{\pa}}_{\pa} := T^{\bar{\pa},\pa} (\Gamma^{\bar{\pa}}).
\]

\medskip
\noindent
\textbf{Transverse metric:} We define a $2l$-form $\eta$ on $\DeltaPerp$ by
\[
\eta:=\bigl(\omega^{\perp}\bigr)^l \quad \mbox{with} \quad l:= \frac{r}{2},
\]
where $r$ is the co-rank of $\vec{\mathcal{K}}|_{\mathcal{S}_0}$ in respect to $\DeltaPerp$, that is, $r = 2n-m - m_0$. The following lemma follows essentially from \cite[Proposition 3.2(ii)]{bprfirst} and the assumption that $\vec{\mathcal{K}}_{\vert S_0}$ is splittable, cf. $\S$ \ref{ssec:splittable}. 

\begin{lemma}\label{lem:SecondReduction}
There are a $\vec{\mathcal{K}}$-transverse section $\mathcal{T}_{\bar{\pa}}\subset \mathcal{S}_0$ centered at $\bar{\pa}$ and a compact set $\tilde{\mathcal{A}}^{\bar{\pa}} \subset \mathcal{T}_{\bar{\pa}}$ such that  the following properties are satisfied:
\begin{itemize}
\item[(i)] The set $\tilde{\mathcal{A}}^{\bar{\pa}}$ has positive measure with respect to the volume form $\eta_{\vert \mathcal{T}_{\bar{\pa}}}$.
\item[(ii)] For every $\pa \in \tilde{\mathcal{A}}^{\bar{\pa}}$, there is an absolutely continuous curve $\psi:[0,1] \rightarrow \DeltaPerp$ such that
\[
\psi(0)=\pa, \quad \psi(1) \in \Sigma, \quad \mbox{\em length}^{\tilde{g}}(\psi) \leq \bar{\ell}+1
\quad \mbox{and} \quad \psi(t) \in \mathcal{L}_{\pa} \subset \mathcal{S}_0 \quad \forall t \in [0,1).
\]
\item[(iii)] For any distinct points $\pa, \pa' \in \tilde{\mathcal{A}}^{\bar{\pa}}$, $\pa$ and $\pa'$ are not $(\vec{\mathcal{K}},2\bar{\ell}+5)$-related.
\end{itemize}
\end{lemma}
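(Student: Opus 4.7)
The plan combines three ingredients: the lifting property for minimal-rank singular horizontal curves from Proposition \ref{PROPsingularrank}, a Fubini-type argument in the foliation chart $\varphi$ to transfer positive measure from $M$ to $\mathcal{T}_{\bar{\pa}}$, and the splittability hypothesis on $\vec{\mathcal{K}}_{|\mathcal{S}_0}$ to thin the resulting set. First I would shrink $\mathcal{W}$ (keeping $\bar{\pa} \in \mathcal{W}$) so that $\pi(\mathcal{W})$ is an open neighborhood of $\bar{y}$ in $M$---possible since $\pi_{|\DeltaPerp}$ is a submersion---and so that every plaque $\varphi^{-1}(W^{1} \times \{a_2\})$ has $\tilde{g}$-diameter at most $1$. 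Note that by Proposition \ref{PROPomegal}(ii) applied to $\mathcal{S}=\mathcal{S}_0$ with complement $E = T_{\pa}\mathcal{T}_{\bar{\pa}}$, the restriction $\eta_{|\mathcal{T}_{\bar{\pa}}}$ is a non-vanishing top-degree form on $\mathcal{T}_{\bar{\pa}}$, so the induced measure is equivalent to the Lebesgue measure. Since $\bar{y}$ is a density point of $\bar{\mathcal{A}}$ and $\pi_{|\mathcal{W}}$ is a submersion onto an open neighborhood of $\bar{y}$, the set $\mathcal{W} \cap \pi^{-1}(\bar{\mathcal{A}})$ has positive Lebesgue measure in $\mathcal{W}$; applying Fubini to $\varphi(\mathcal{W} \cap \pi^{-1}(\bar{\mathcal{A}})) \subset W^{1} \times W^{2}$ yields a set $B \subset W^{2}$ of positive Lebesgue measure such that for every $a_2 \in B$ the slice $(W^{1} \times \{a_2\}) \cap \varphi(\pi^{-1}(\bar{\mathcal{A}}))$ is non-empty. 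Define $E_0 := \varphi^{-1}(\{0\} \times B) \subset \mathcal{T}_{\bar{\pa}}$; it has positive $\eta_{|\mathcal{T}_{\bar{\pa}}}$-measure, and by construction for every $\pa \in E_0$ the plaque $\mathcal{L}_{\pa} \cap \mathcal{W}$ contains some $\pb$ with $\pi(\pb) \in \bar{\mathcal{A}}$.

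For each such pair $(\pa,\pb)$ with $y := \pi(\pb) \in \bar{\mathcal{A}}$, Lemma \ref{lem:FirstReduction} provides a minimal-rank singular horizontal curve $\gamma : [0,1] \to M$ joining $\bar{x}$ to $y$ with $\mbox{length}^g(\gamma) \leq \bar{\ell}$ and such that every abnormal lift of $\gamma$ meets $\Sigma$. By Proposition \ref{PROPsingularrank}, applied in minimal rank, there is an abnormal lift $\psi_{\pb} : [0,1] \to \DeltaPerp$ of $\gamma$ with $\psi_{\pb}(1) = \pb$ and $\mbox{length}^{\tilde{g}}(\psi_{\pb}) = \mbox{length}^g(\gamma) \leq \bar{\ell}$, and by our hypothesis $\psi_{\pb}([0,1]) \cap \Sigma \neq \emptyset$. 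Set $t_{\ast} := \sup\{t \in [0,1] : \psi_{\pb}(t) \in \Sigma\}$; since $\pb \in \mathcal{S}_0$ one has $t_{\ast} < 1$, and continuity of $\psi_{\pb}$ together with closedness of $\Sigma$ in $\DeltaPerp$ yield $\psi_{\pb}(t_{\ast}) \in \Sigma$. On $(t_{\ast},1]$ the curve $\psi_{\pb}$ remains in $\mathcal{S}_0$ and is horizontal with respect to $\vec{\mathcal{K}}_{|\mathcal{S}_0} = \mbox{ker}(\omega^{\perp})_{|\mathcal{S}_0}$, hence stays in the leaf $\mathcal{L}_{\pb} = \mathcal{L}_{\pa}$. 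Reversing the time on $[t_{\ast},1]$ and concatenating with an in-leaf path from $\pa$ to $\pb$ of length at most $1$ inside the plaque $\mathcal{L}_{\pa} \cap \mathcal{W}$ produces an absolutely continuous curve $\psi : [0,1] \to \DeltaPerp$ with $\psi(0)=\pa$, $\psi(1) = \psi_{\pb}(t_{\ast}) \in \Sigma$, $\psi([0,1)) \subset \mathcal{L}_{\pa}$, and total length at most $\bar{\ell}+1$, establishing property (ii) for every point of $E_0$.

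Finally, the splittability of $\vec{\mathcal{K}}_{|\mathcal{S}_0}$ assumed in Theorem \ref{thm:Sardminrank}, applied in the sense of Definition \ref{def:Splittable} to the measurable set $E_0$, the transverse section $\mathcal{T}_{\bar{\pa}}$, and the length parameter $2\bar{\ell}+5$, produces a measurable subset $F \subset E_0$ of positive Lebesgue measure whose distinct points are pairwise not $(\vec{\mathcal{K}},2\bar{\ell}+5)$-related. Inner regularity of the Lebesgue measure then furnishes a compact $\tilde{\mathcal{A}}^{\bar{\pa}} \subset F$ still of positive measure, which satisfies (i) by construction, inherits (ii) from $E_0$, and satisfies (iii) by the choice of $F$. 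The main technical obstacle is the Fubini step of the first paragraph: it depends crucially on both the submersivity of $\pi_{|\DeltaPerp}$ (to pass from positive measure in $M$ to positive measure in $\DeltaPerp$) and on the local triviality of the foliation in the chart $\varphi$ (to descend from positive measure on $\mathcal{W}$ to positive $\eta$-measure on the transversal), and fixing up-front choices that make both mechanisms cooperate is the delicate part of the argument.
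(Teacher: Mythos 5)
Your proof is correct and follows essentially the same strategy as the paper's: a Fubini-type argument to transfer positive measure from $\bar{\mathcal{A}}\subset M$ to a transverse section $\mathcal{T}_{\bar{\pa}}$, the minimal-rank lifting property combined with Lemma \ref{lem:FirstReduction} to produce in-leaf paths of bounded length hitting $\Sigma$, splittability to thin the set so that distinct points are not $(\vec{\mathcal{K}},2\bar\ell+5)$-related, and Proposition \ref{PROPomegal}(ii) to convert positive Lebesgue measure on the section into positive $\eta_{|\mathcal{T}_{\bar{\pa}}}$-measure.

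The one place where you diverge from the paper is the Fubini step, and the divergence is a genuine simplification rather than a gap. You carry out the Fubini argument directly in the foliation chart $\varphi:\mathcal{W}\to W^1\times W^2$: you first use the submersivity of $\pi|_{\DeltaPerp}$ and the density-point property of $\bar y$ to give $\mathcal{W}\cap\pi^{-1}(\bar{\mathcal{A}})$ positive measure in $\mathcal{W}$, and then slice by the plaque direction $W^1$ to land a positive-measure set on the transversal $\{0\}\times W^2\cong\mathcal{T}_{\bar{\pa}}$. The paper instead factors this through $M$: it builds explicit linear complements $\vec{\mathcal{H}}\supset\vec{\mathcal{K}}(\bar\pa)$, $\vec{\mathcal{P}}$, $\vec{\mathcal{Q}}=\vec{\mathcal{P}}\oplus\vec V_{\bar y}$, constructs a graph-type submanifold $\mathcal{I}\subset\mathcal{W}$ on which $\pi$ is a diffeomorphism onto $\mathcal{E}\subset M$, slices $M$ by the $(n-m_0)$-dimensional leaves $\mathcal{E}_{a_1}$, intersects $\bar{\mathcal{A}}$ with one of them to get a compact $\Theta$ of positive $(n-m_0)$-measure, lifts $\Theta$ vertically by $\vec V$ into a positive-measure set $\tilde\Theta\subset\mathcal{Q}_{\tilde a_1}$, and finally transports to $\mathcal{T}_{\bar{\pa}}$ via a second diffeomorphism $G_{\tilde a_1}$. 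Your version collapses this two-step projection-and-lift into a single Fubini on $\mathcal{W}$, which is shorter but obscures the geometric role of the vertical fibers $\vec V_{\bar y}$ that the paper makes explicit. Both accomplish (i); the trade-off is brevity versus transparency of how the $(n-m)$-dimensional cotangent directions enter the measure count. Two small things you should make precise if you write this up: take $B$ to be $\{a_2 : |A_{a_2}|>0\}$ (which is Lebesgue measurable by Fubini) rather than the set of non-empty slices (a mere projection, not obviously measurable), and justify that the \emph{intrinsic} diameter of each plaque $\varphi^{-1}(W^1\times\{a_2\})$ can be made $\le 1$ by shrinking $\mathcal{W}$ (bounded derivatives of $\varphi^{-1}$ on a compact subchart suffice).
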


\begin{proof}[Proof of Lemma \ref{lem:SecondReduction}]
Recall that $\bar{y}$ is a density point of $\bar{\mathcal{A}} \setminus \{\bar{x}\}$ and $\bar{\pa}\in \mathcal{S}_0$ satisfies $\pi(\bar{\pa})=\bar{y}$. Consider the notation of the local normal form above and for every $y\in \mathcal{U}$, denote by $\mathcal{V}_{y}$ the vertical fiber in $\DeltaPerp$ over $y$ given by 
\[
\mathcal{V}_{y}:=\pi^{-1}(\{y\}) \cap T_{\bar{\pa}} \DeltaPerp,
\]
which coincides with a vector space $\vec{V}_y$ of dimension $n-m$ with the origin removed. Since $\vec{\mathcal{K}}(\bar{\pa})\cap T_{\bar{\pa}}\mathcal{V}_{\bar{y}}=\{0\}$ (see \cite[Theorem 1.1(i) and Equation (3.5)]{bprfirst}), there is a vector space $\vec{\mathcal{H}}\subset T_{\bar{\pa}}\DeltaPerp$ of dimension $n$ containing $\vec{\mathcal{K}} (\bar{\pa})$ which is transverse to $T_{\bar{\pa}}\mathcal{V}_{\bar{y}}=\vec{V}_{\bar{y}}$ in $T_{\bar{\pa}}\DeltaPerp$, that is, such that 
\[
\vec{\mathcal{H}} \oplus \vec{V}_{\bar{y}} = T_{\bar{\pa}}\DeltaPerp \quad \mbox{and} \quad \vec{\mathcal{K}} (\bar{\pa}) \subset \vec{\mathcal{H}}.
\]
Then, we consider a vector space $\vec{\mathcal{P}} \subset \vec{\mathcal{H}}$ such that
\begin{eqnarray}\label{EQ22fev2}
\vec{\mathcal{K}} (\bar{\pa}) \oplus \vec{\mathcal{P}} = \vec{\mathcal{H}}
\end{eqnarray}
and define the vector spaces $\vec{\mathcal{Q}} \subset T_{\bar{\pa}}\DeltaPerp$ and $\vec{Q}, \vec{H}, \vec{P} \subset \R^{2n-m}$ by
\[
\vec{\mathcal{Q}} := \vec{\mathcal{P}} \oplus \vec{V}_{\bar{y}},  \quad \vec{Q} := d_{\bar{\pa}}\varphi \bigl( \vec{\mathcal{Q}}\bigr), \quad \vec{H} := d_{\bar{\pa}}\varphi \bigl( \vec{\mathcal{H}}\bigr), \quad \vec{P} := d_{\bar{\pa}}\varphi \bigl( \vec{\mathcal{P}}\bigr).
\]
By construction, $\vec{\mathcal{H}}$ and $\vec{H}$ have dimension $n$, $\vec{\mathcal{P}}$ and $\vec{P}$ have dimension $n-m_0$, $\vec{\mathcal{Q}}$ and $\vec{Q}$ have dimension $r=2n-m-m_0$ and, remembering (\ref{EQ22fev1})-(\ref{EQ22fev2}), we have 
\begin{eqnarray}\label{EQ22fev4}
\vec{K}\oplus\vec{P}= \vec{H}, \quad 
\vec{\mathcal{K}}(\bar{\pa}) \oplus \vec{\mathcal{Q}} = T_{\bar{\pa}}\DeltaPerp, \quad \vec{K}\oplus \vec{Q} = \R^{2n-m}.
\end{eqnarray}
Then, we define two $n$-dimensional open smooth manifolds $H\subset W$ and $\mathcal{H} \subset \mathcal{W}$ by 
\[
H := \vec{H} \cap W \quad \mbox{and} \quad \mathcal{H}:=\varphi^{-1}(H),
\]
and note that the restriction of $\pi$ to $\mathcal{H}$  is a submersion at $\bar{\pa}$. Therefore,  there is a smooth submanifold $I$ of $W$ of dimension $n$ containing $\bar{a}=0$ of the form ($|\cdot|$ stands for the Euclidean norm in $\R^{m_0}$ or $\R^{2n-m}$)
\[
I = \left\{(a_1,0) + p \, \vert \, a_1 \in W^1, \, p\in \vec{P}, \, |a_1| < \delta, |p| < \delta \right\} \subset H,
\]
with $\delta>0$, such that  the mapping 
\[
F =\pi_{\vert \mathcal{I}}\, : \, \mathcal{I}:= \varphi^{-1}(I) \, \longrightarrow \, \mathcal{E} := \pi (\mathcal{I}) 
\]
 is a smooth diffeomorphism. 
 
 \begin{figure}[H]
\begin{center}
\includegraphics[width=6cm]{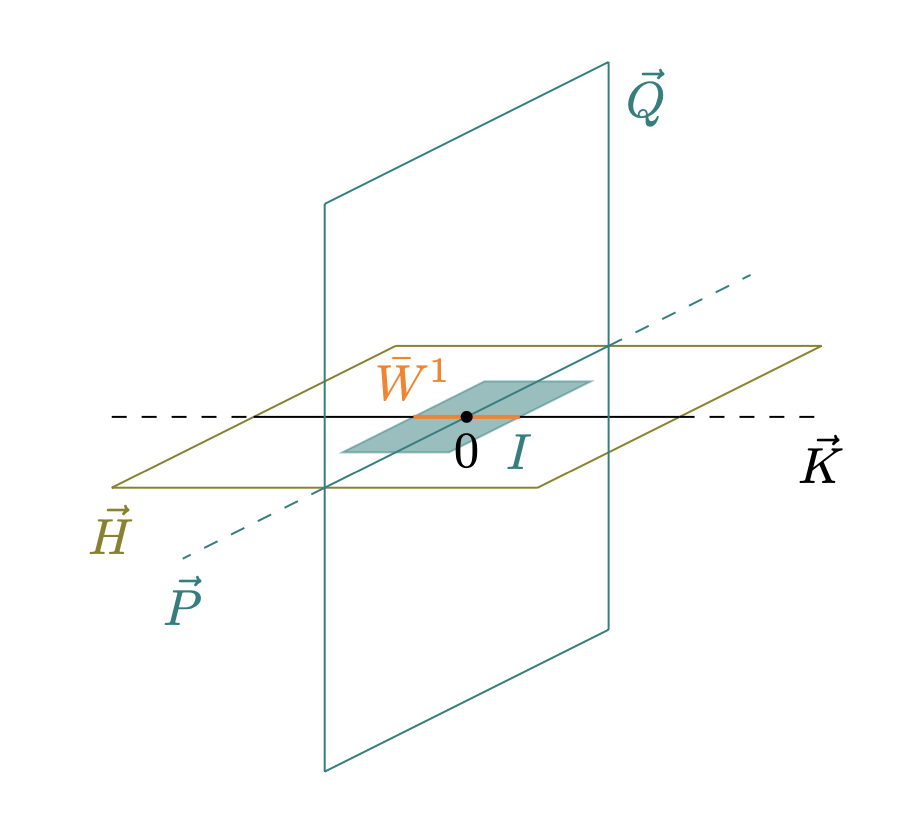}
\caption{A picture showing the sets $\vec{K}, \vec{H}, \vec{P}, \vec{Q}, I$ and $\bar{W}^1$ \label{figrev1}}
\end{center}
\end{figure}
 
Then, we denote by $\bar{W}^1$ the set of $a_1\in W^1$ with $|a_1|\leq \delta /2$ and for each $a_1 \in \bar{W}_1$, we define the sets $P_{a_1} \subset H$, $\mathcal{P}_{a_1}\subset \mathcal{H}$, $\mathcal{E}_{a_1}\subset \mathcal{E}$, $\mathcal{Q}_{a_1}\subset \mathcal{W}$ and $Q_{a_1}\subset W$ by 
\[
P_{a_1} := \bigl((a_1,0) + \vec{P}\bigr) \cap I, \quad \mathcal{P}_{a_1} := \varphi^{-1} \left(P_{a_1}\right), \quad \mathcal{E}_{a_1} := F \left(  \mathcal{P}_{a_1}\right).
\] 
\[
\mathcal{Q}_{a_1} :=\left\{ (y,q) + (0,h) \, \vert \, (y,q) \in \mathcal{P}_{a_1}, \, h \in \vec{V}_y, \, |h| < \delta\right\}  \quad \mbox{and} \quad Q_{a_1} := \varphi\left( \mathcal{Q}_{a_1} \right).
\]
By construction, for each $a_1 \in \bar{W}^1$, the set $P_{a_1}$ is an open smooth submanifold of $W$ of dimension $n-m_0$, the set $\mathcal{P}_{a_1}$ is an open smooth submanifold of $\mathcal{W}$ of dimension $n-m_0$ and  the set $\mathcal{E}_{a_1}$ is an open smooth submanifold of $\mathcal{U}$ of dimension $n-m_0$. In fact, the collection $\{\mathcal{P}_{a_1}\}_{a_1\in \bar{W}_1}$ defines a collection of pairwise disjoint slices (or plaques to use the terminology of foliations recalled above) that cover $\mathcal{I}\subset \mathcal{H}$ and projects to the collection of pairwise disjoint slices $\{\mathcal{E}_{a_1}\}_{a_1\in \bar{W}_1}$ covering $\mathcal{E}=\pi(\mathcal{I}) \subset M$. Furthermore, since by $\vec{\mathcal{P}} \cap \vec{V}_{\bar{y}} = d_0\varphi^{-1} (\vec{P}) \cap T_{\bar{\pa}}\mathcal{V}_{\bar{y}}=\{0\}$ the mapping 
\[
(\pa,h) \in \left\{ ((y,q),h) \, \vert \, (y,q) \in \mathcal{P}_0, \, h \in \vec{V}_y \right\} \, \longmapsto \, \pa + (0,h) \in \DeltaPerp
\]
 is an immersion at $(\bar{\pa},0)$ valued in $\mathcal{Q}_0$ and since the mapping 
 \[
a \in \vec{Q} \, \longmapsto \bigl(0,\pi^2(a)\bigr)\in T_0 \bigl(\{0\}\times W^2\bigr)
 \]
is a linear isomorphism (by (\ref{EQ22fev4}) we have $\vec{K}\oplus \vec{Q} = \R^{2n-m}$), we may assume by taking $\delta>0$ small enough that, for each $a_1 \in \bar{W}^1$, the sets $\mathcal{Q}_{a_1} $ and $Q_{a_1}$ are open smooth manifolds of dimension $r$ and that the mapping
\[
G_{a_1} \, : \, a \in  Q_{a_1} \, \longmapsto \, \bigl(0,\pi^2(a)\bigr)\in T_0 \bigl(\{0\}\times W^2\bigr)
\]
is a smooth diffeomorphism from $Q_{a_1}$ onto its image $G_{a_1}(Q_{a_1})$. Thus, $\{\mathcal{Q}_{a_1}\}_{a_1\in \bar{W}_1}$ defines a collection of pairwise disjoint slices that extends the collection $\{\mathcal{P}_{a_1}\}_{a_1\in \bar{W}_1}$ (for each $a_1 \in \bar{W}^1$, $\mathcal{P}_{a_1}\subset \mathcal{Q}_{a_1}$) and covers a neigborhood of $\bar{\pa}$ in $\Delta^{\perp}$. Furthermore, we observe that for every  $a_1 \in \bar{W}^1$ and every $a\in Q_{a_1}$, the two points $a$ and $b=G_{a_1}(a)$ have the same coordinate in $W^2$ so that their images by $\varphi^{-1}$, $\varphi^{-1}(a)$ and $\varphi^{-1}(b)$, belong to the same plaque and to the same leaf of the foliation defined by $\vec{\mathcal{K}}$ in $\mathcal{W}$, so, by the construction made before the statement of the lemma,  the points $\varphi^{-1}(a)\in \mathcal{W}$ and $\varphi^{-1}(b)\in \mathcal{T}_{\bar{\pa}}$ can be connected through a smooth curve horizontal with respect to $\vec{\mathcal{K}}$ of length (w.r.t $\tilde{g}$) less than 1.  In other words, for every $a_1 \in \bar{W}^1$, the mapping $ \varphi^{-1} \circ G_{a_1} \circ \varphi$ acts as a projection from the slice $\mathcal{Q}_{a_1}$ to $\mathcal{T}_{\bar{\pa}}$ along horizontal curves with respect to $\vec{\mathcal{K}}$ (and with length less than 1). We are now ready to conclude the proof of the Lemma which consists in applying Fubini's Theorem to select a slice $\mathcal{E}_{\tilde{a}_1}$ in $\{\mathcal{E}_{a_1}\}_{a_1\in \bar{W}_1}$ whose intersection with $\bar{\mathcal{A}}$ has positive $(n-m_0)$-dimensional Lebesgue measure, to lift the intersection to $\mathcal{Q}_{\tilde{a}_1}\subset \Delta^{\perp}$, and to project it to $\mathcal{T}_{\bar{\pa}}$ by action of $\varphi^{-1} \circ G_{\tilde{a}_1} \circ \varphi$.

By construction, the sets $P_{a_1}$ as well as $\mathcal{P}_{a_1}$, $\mathcal{E}_{a_1}$, with $a_1 \in \bar{W}^1$
are pairwise disjoint and satisfy
\[
 \bigcup_{a_1\in \bar{W}^1} P_{a_1} = I \quad \bigcup_{a_1\in \bar{W}^1} \mathcal{P}_{a_1} = \mathcal{I},  \quad \bigcup_{a_1\in \bar{W}^1} \mathcal{E}_{a_1} = \mathcal{E}.
\]
Since $\bar{y}$ is a density point of $\bar{\mathcal{A}}$, by Fubini's Theorem, we infer that there is $\tilde{a}_1 \in \bar{W}^1$ such that the $(n-m_0)$-dimensional Lebesgue measure of the set
\[
\Theta := \bar{\mathcal{A}} \cap \mathcal{E}_{\tilde{a}_1} \subset \mathcal{E}_{\tilde{a}_1}
\]
is positive. In fact, by taking a compact subset of $\Theta$ of positive measure, we may indeed assume that $\Theta$ is compact. By construction, for every $\theta \in \Theta$, there is an horizontal path  $\gamma^{\theta} \in \Omega_{\Delta}^{\bar{x}}$ of length $\leq \bar{\ell}$ (w.r.t $g$) such that $\gamma^{\theta}(1)=\theta$, $\mbox{rank}^{\Delta} (\gamma)=m$ and  for which all abnormal lifts meet the set $\Sigma$. Hence, by \cite[Proposition 3.4]{bprfirst}, for every $p\in \DeltaPerp_{\theta}$, $\gamma^{\theta}$ admits an abnormal lift $\psi^{\theta,p}:[0,1]\rightarrow \DeltaPerp$ such that $\psi^{\theta,p}(1)=(\theta,p)$ and $\psi^{x,p}([0,1])\cap \Sigma \neq \emptyset$. Thus, we obtain that any $\pa$ in the set 
\[
\tilde{\Theta} := \left\{ F^{-1} (\theta) + (0,h) \, \vert \, \theta \in \Theta, \, h\in \vec{V}_{\theta}, \, |h| \leq \delta \right\}
\]
can be joined to $\Sigma$ by a curve of length $\leq \bar{\ell}$. By Fubini's Theorem, the set $\tilde{\Theta}$ is a compact set of positive measure in the manifold $\mathcal{Q}_{\tilde{a_1}}$, thus its image by $\varphi$, $\varphi(\tilde{\Theta})$, is a compact set of positive measure in the manifold $Q_{\tilde{a_1}}$, the image of $\varphi(\tilde{\Theta})$ by $G_{\tilde{a}_1}$, $\Lambda :=(G_{\tilde{a}_1}\circ \varphi)(\tilde{\Theta})$ has positive measure in $\{0\} \times W^2$ and the image of $\Lambda$ by $\varphi^{-1}$ has positive measure in $\mathcal{T}_{\bar{\pa}}$. By construction, any point of $\varphi^{-1}(\Lambda)$ can be joined to a point of $\Sigma$ by an absolutely continuous curve horizontal with respect to $\vec{\mathcal{K}}$ of length (w.r.t $\tilde{g}$) $\leq \bar{l}+1$. By assumption of splittability, we can select in $\varphi^{-1}(\Lambda)$ a compact subset $\tilde{\mathcal{A}}^{\bar{\pa}}$ of positive measure satisfying the same property and whose points are not $(\vec{\mathcal{K}},2\bar{\ell}+5)$-related. We complete the proof by applying \cite[Proposition 3.2(ii)]{bprfirst}.
\end{proof}

The next result combines the geometrical framework of Lemma \ref{lem:SecondReduction} with a compactness argument and the witness section given by Theorem \ref{thm:CorWitness}.

\begin{lemma}\label{lem:WitnessReduction}
There are a point $\hat{\pa}\in \Sigma$, a compact set $\check{\mathcal{A}}^{\bar{\pa}}\subset \tilde{\mathcal{A}}^{\bar{\pa}} \subset \mathcal{T}_{\bar{\pa}}$, a relatively compact open neighborhood $V\subset \DeltaPerp$ of $\hat{\pa}$, a compact set $\check{\Sigma} \subset \Sigma \cap V$, a real analytic function $h: V \to [0,\infty)$, a \ssanalytic set $X\subset V\setminus \Sigma$ and $C, \nu, \epsilon>0$ such that the following properties are satisfied:
\begin{itemize}
\item[(i)] The set $h^{-1}(0)$ is equal to $\Sigma \cap V$.
\item[(ii)] For every $0<c<\epsilon$, the \ssanalytic set $X^c:=X\cap h^{-1}(c)$ has $r$-dimensional volume with respect to $\tilde{g}$ bounded by $C$. In particular, $X^c$ is a $r$-dimensional set and $X$ is a $(r+1)$-dimensional set.

\item[(iii)] For every $0<c<\epsilon$ and for every $\pa \in h^{-1}(c) \subset  V \setminus \Sigma$, there is a smooth curve $\alpha :[0,1] \rightarrow V\setminus \Sigma$ which is contained in $\mathcal{L}_{\pa} \cap h^{-1}(c)$ such that
\[
\alpha(0)=\pa, \quad \alpha(1)\in X^c, \quad \mbox{and} \quad \mbox{\em length}^{\tilde{g}}(\alpha) \leq 1.
\]

\item[(iv)] For every $0<c<\epsilon$, we can decompose $X^c$ as the union of two disjoint \ssanalytic sets $Y^c$ and $Z^c$, such that $Z^c$ has dimension $<r$, and $Y^c$ is the union of finitely many smooth \ssanalytic sets $Y_i^c$, with $i\in I^c$, of dimension $r$ such that 
\[
\partial Y^c = \overline{Y^c} \setminus Y^c \subset Z^c \quad \text{ and } \quad  T_{\pa} \DeltaPerp = T_{\pa}Y_i^c + \vec{\mathcal{K}}(\pa) \qquad \forall \pa \in Y_i^c, \, \forall i \in I_c.
\]

\item[(v)] For all $\pa \in \check{\mathcal{A}}^{\bar{\pa}}$, there is an absolutely continuous curve $\psi:[0,1] \rightarrow \DeltaPerp$ such that
\[
\psi(0)=\pa, \quad \psi(1) \in \check{\Sigma}, \quad \mbox{\em length}^{\tilde{g}}(\psi) \leq \bar{\ell}+1
\quad \mbox{and} \quad \psi(t) \in \mathcal{L}_{\pa} \subset \mathcal{S}_0 \quad \forall t \in [0,1).
\]
\item[(vi)] The set $\check{\mathcal{A}}^{\bar{\pa}}$ has measure $\geq \nu$ with respect to the volume form $\eta_{\vert \mathcal{T}_{\bar{\pa}}}$.
\end{itemize}
Moreover, there is a continuous function $\delta:[0,\infty) \rightarrow [0,\infty)$ with $\delta(0)=0$ such that for every $0<c<\epsilon$ and every $\pa \in h^{-1}(c)$,
 \begin{eqnarray}\label{23mars1}
\left| \eta_{\pa} \left(\xi_1, \ldots, \xi_{d}\right) \right| \leq \delta(c)|\xi_1|^{\tilde{g}} \cdots |\xi_d|^{\tilde{g}} \qquad \forall \xi_1, \ldots, \xi_{d} \in  T_{\pa} \DeltaPerp.
 \end{eqnarray}
\end{lemma}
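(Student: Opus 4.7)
I would combine Lemma \ref{lem:SecondReduction} (which supplies the positive-measure set $\tilde{\mathcal{A}}^{\bar{\pa}}$ together with curves of bounded length joining its points to $\Sigma$) with Theorem \ref{thm:CorWitness} (which supplies the local witness transverse section) via a compactness--and--pigeonhole step, and separately verify the transverse bound \eqref{23mars1} from analyticity.

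First, I view the regular analytic foliation $\vec{\mathcal{K}}_{|\mathcal{S}_0}$ on the open set $\mathcal{S}_0 = \DeltaPerp \setminus \Sigma$ as the regular locus of a singular analytic foliation on $\DeltaPerp$ associated with the integrable sheaf of $1$-forms $\Omega = i^{\ast}\Omega_{\vec{\Delta}}$ from Example \ref{rk:FormsVecDelta}, where $i: \DeltaPerp \hookrightarrow T^{\ast}M$ is the inclusion. By \eqref{EQcharac} its kernel is $\mbox{ker}(\omega^{\perp})$; integrability, which holds on the open dense $\mathcal{S}_0$ by Frobenius, extends to all of $\DeltaPerp$ by analyticity. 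Hence $\Omega$ has generic corank $r = 2n - m - m_0$ and singular set exactly $\Sigma$, since this is the analytic locus where $\mbox{dim}\,\mbox{ker}(\omega^{\perp})$ jumps above $m_0$. Applying Theorem \ref{thm:CorWitness} at every point $\hat{\pb} \in \Sigma$ with metric $\tilde{g}$ and length parameter $\ell = 1$ yields, for each $\hat{\pb}$, a relatively compact open neighborhood $V_{\hat{\pb}}$ together with data $(h_{\hat{\pb}}, X_{\hat{\pb}}, C_{\hat{\pb}}, \varepsilon_{\hat{\pb}})$ satisfying the local analogues of (i)--(iv).

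Next, I localize inside a single neighborhood by pigeonhole. Because $\tilde{\mathcal{A}}^{\bar{\pa}}$ is compact and the curves from Lemma \ref{lem:SecondReduction}(ii) have length $\leq \bar{\ell} + 1$, all their endpoints lie in the compact tube
\[
T := \left\{ \pb \in \DeltaPerp \, \vert \, d^{\tilde{g}}(\pb, \tilde{\mathcal{A}}^{\bar{\pa}}) \leq \bar{\ell} + 1 \right\},
\]
hence in the compact set $T \cap \Sigma$, which is covered by finitely many $V_{\hat{\pb}_1}, \ldots, V_{\hat{\pb}_k}$. For each index $j$, let $C_j \subset \tilde{\mathcal{A}}^{\bar{\pa}}$ be the set of $\pa$ admitting a curve as in Lemma \ref{lem:SecondReduction}(ii) whose endpoint lies in $V_{\hat{\pb}_j}$. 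Each $C_j$ is the projection to the first factor of an analytic (Suslin) subset of $\tilde{\mathcal{A}}^{\bar{\pa}} \times W^{1,2}([0,1], \DeltaPerp)$, and is therefore universally measurable; the $C_j$ cover $\tilde{\mathcal{A}}^{\bar{\pa}}$, so at least one $C_{j_0}$ has positive $\eta_{|\mathcal{T}_{\bar{\pa}}}$-mass, from which inner regularity extracts a compact subset $\check{\mathcal{A}}^{\bar{\pa}} \subset C_{j_0}$ of positive measure $\nu$. I then set $V := V_{\hat{\pb}_{j_0}}$, $\hat{\pa} := \hat{\pb}_{j_0}$, take $\check{\Sigma}$ to be the closure in $V$ of the endpoints of the selected curves starting in $\check{\mathcal{A}}^{\bar{\pa}}$ (a compact subset of $\Sigma \cap V$), and inherit $(h, X, C, \epsilon)$ from Theorem \ref{thm:CorWitness} applied at $V_{\hat{\pb}_{j_0}}$; items (i)--(iv) are immediate, while (v) and (vi) hold by construction.

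For the transverse bound \eqref{23mars1}, I observe that on $\Sigma$ the dimension of $\mbox{ker}(\omega^{\perp})$ strictly exceeds $m_0$, so the symplectic rank of $\omega^{\perp}$ on $T\DeltaPerp$ is strictly less than $r = 2l$, forcing $\eta = (\omega^{\perp})^{l} \equiv 0$ on $\Sigma \cap V = h^{-1}(0)$. The function
\[
\delta(c) := \sup \left\{ \|\eta_{\pb}\|_{\tilde{g}} \, \vert \, \pb \in \overline{V}, \; h(\pb) \leq c \right\}
\]
is finite on compact intervals by compactness of $\overline{V}$, nondecreasing in $c$, and satisfies $\delta(c) \to 0$ as $c \to 0^{+}$: indeed $\overline{V} \cap \{h \leq c\}$ decreases to $\overline{V} \cap h^{-1}(0) \subset \Sigma$, on which $\eta$ vanishes, so $\delta(c) \to 0$ by continuity of $\eta$. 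Extending $\delta$ continuously with $\delta(0) = 0$ yields \eqref{23mars1}. The main obstacle is the pigeonhole step: one must justify that the sets $C_j$ are measurable (equivalently, that the graph of the multifunction $\pa \mapsto \{\text{admissible endpoints}\}$ is analytic in a suitable Polish space) in order to apply inner regularity; the remaining steps are direct appeals to Theorem \ref{thm:CorWitness} and Lemma \ref{lem:SecondReduction}, or elementary compactness/continuity arguments.
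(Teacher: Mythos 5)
Your overall strategy coincides with the paper's: apply Theorem \ref{thm:CorWitness} with $\ell=1$ near the compact set of accessible endpoints in $\Sigma$, cover by finitely many neighborhoods, pigeonhole to find one receiving a positive-measure slice of $\tilde{\mathcal{A}}^{\bar{\pa}}$, and prove \eqref{23mars1} by noting $\dim\mathrm{ker}(\omega^{\perp}) \geq m_0+2$ on $\Sigma$ forces $\eta = (\omega^{\perp})^l = 0$ there by Proposition~\ref{PROPomegal}(iii), whence $\delta(c)\to 0$ by compactness and continuity.

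There are two deviations worth flagging. First, on measurability of the covering pieces: you invoke universal measurability of projections of analytic (Suslin) subsets of $\tilde{\mathcal A}^{\bar{\pa}}\times W^{1,2}$. This is correct but heavier machinery than needed. The paper instead proves each piece is Borel directly, writing $\tilde{\mathcal A}^{\bar{\pa}}_i = \bigcap_{k\in\mathbb N^{*}} \mathcal A^{\bar{\pa}}_{i,k}$ where $\mathcal{A}^{\bar{\pa}}_{i,k}$ imposes only \emph{strict} length $<\bar\ell+1$ and an endpoint in the open $1/k$-tube $B^{\tilde g}_{1/k}(\check\Sigma_i)\cap(V\setminus\Sigma)$; regularity of $\vec{\mathcal K}_{|\mathcal S_0}$ then makes each $\mathcal A^{\bar{\pa}}_{i,k}$ open in $\tilde{\mathcal A}^{\bar{\pa}}$, bypassing descriptive-set-theoretic selection entirely. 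Second, your definition of $\check\Sigma$ as ``the closure in $V$ of the endpoints of the selected curves'' does not guarantee compactness: endpoints lie in the compact tube $T$ but could accumulate at $T\cap\partial V$, outside $V$. The paper preempts this by covering the compact endpoint set with \emph{compact} sub-neighborhoods $\check V_{\pa_i}\subset V_{\pa_i}$ and then setting $\check\Sigma_i := \Sigma\cap\check V_{\pa_i}\cap\mathcal B$, which is automatically compact. Your proof would be repaired by inserting this shrinking step, after which the argument goes through.
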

\begin{proof}[Proof of Lemma \ref{lem:WitnessReduction}]
Let $N:=\DeltaPerp$ be the real-analytic manifold of dimension $2n-m$ equipped with the singular analytic foliation $\mathcal{F}$ of generic corank $r=2n-m-m_0$ with singular set $\Sigma$ and $\mathcal{B} \subset \Sigma$ the set of $\pa \in \Sigma$ for which there is an absolutely continuous curve $\psi:[0,1] \rightarrow \DeltaPerp$ such that
\begin{eqnarray*}
\psi(0)\in \tilde{\mathcal{A}}^{\bar{\pa}}, \quad \psi(1)=\pa,  \quad \mbox{length}^{\tilde{g}}(\psi) \leq \bar{\ell}+1 \quad \mbox{and} \quad \psi(t) \in \mathcal{L}_{\psi(0)} \subset \mathcal{S}_0 \quad \forall t \in [0,1),
\end{eqnarray*}
where $\tilde{\mathcal{A}}^{\bar{\pa}}\subset \mathcal{T}_{\bar{\pa}}$  is the set provided by Lemma \ref{lem:SecondReduction}. The compactness of $\tilde{\mathcal{A}}^{\bar{\pa}}$ together with the closedness of $\Sigma$ and the upper bound on the length of curves (with the completeness of $\tilde{g}$) imply that $\mathcal{B}$ is a compact subset of $\Sigma$. By Theorem  \ref{thm:CorWitness} applied with $\ell=1$, for every $\pa \in \mathcal{B}$, there are a relatively compact open neighborhood $V_{\pa}$ of $\pa$ in $N=\DeltaPerp$, a real-analytic function $h_{\pa}: V_{\pa} \to [0,\infty)$, a \ssanalytic set $X_{\pa}\subset V_{\pa}\setminus \Sigma$ and $C_{\pa}>0$ such that the properties (i)-(iv) of Theorem  \ref{thm:CorWitness} are satisfied. Pick for each $\pa \in \mathcal{B}$ a compact neighborhood $\check{V}_{\pa}\subset V_{\pa}$ of $\pa$ and consider by compactness of $\mathcal{B}$ a finite family $\{\pa_i\}_{i\in I}$ such that
\begin{eqnarray}\label{23mars2}
\mathcal{B} \subset \bigcup_{i\in I} \check{V}_{\pa_i}  \subset \bigcup_{i\in I} V_{\pa_i}.
\end{eqnarray}
Then, for every $i\in I$, denote by $\tilde{\mathcal{A}}_i^{\bar{\pa}}$ the set of $\pa\in \tilde{\mathcal{A}}^{\bar{\pa}}$ for which  there is an absolutely continuous curve $\psi:[0,1] \rightarrow \DeltaPerp$ such that
\begin{eqnarray*}
\psi(0)= \pa, \quad \psi(1)\in \check{\Sigma}_i,  \quad \mbox{length}^{\tilde{g}}(\psi) \leq \bar{\ell}+1 \quad \mbox{and} \quad \psi(t) \in \mathcal{L}_{\pa} \subset \mathcal{S}_0 \quad \forall t \in [0,1),
\end{eqnarray*}
with $\check{\Sigma}_i:= \Sigma \cap \check{V}_{\pa_i}\cap \mathcal{B}$. We claim that each set $\tilde{\mathcal{A}}_i^{\bar{\pa}}$ is a Borel subset of $\tilde{\mathcal{A}}^{\bar{\pa}}$. As a matter of fact, for each $i\in I$, we can write
\[
\tilde{\mathcal{A}}_i^{\bar{\pa}} = \bigcap_{k\in \N^*} \tilde{\mathcal{A}}_{i,k}^{\bar{\pa}}, 
\]
where for each $k\in \N^*$, the set $\tilde{\mathcal{A}}_{i,k}^{\bar{\pa}}$ is defined as the set of $\pa\in \tilde{\mathcal{A}}^{\bar{\pa}}$ for which  there is an absolutely continuous curve $\psi:[0,1] \rightarrow \mathcal{L}_{\pa} \in \mathcal{S}_0$ such that
\begin{eqnarray*}
\psi(0)= \pa, \quad \psi(1)\in B_{1/k}^{\tilde{g}} \left(\check{\Sigma}_i\right) \cap \left( V\setminus \Sigma\right) ,  \quad \mbox{length}^{\tilde{g}}(\psi) < \bar{\ell}+1,
\end{eqnarray*}
with 
\[
B_{1/k}^{\tilde{g}} \left(\check{\Sigma}_i\right) := \left\{ \pa' \in \DeltaPerp \, \vert \, d^{\tilde{g}}\left(\pa',\check{\Sigma}_i\right) < \frac{1}{k} \right\}.
\]
By regularity of $\vec{\mathcal{K}}_{|\mathcal{S}_0}$, each set $\tilde{\mathcal{A}}_{i,k}^{\bar{\pa}}$ is open in $\tilde{\mathcal{A}}^{\bar{\pa}}$, so we infer that each $\tilde{\mathcal{A}}_i^{\bar{\pa}}$ is a Borel subset of $\tilde{\mathcal{A}}^{\bar{\pa}}$. Furthermore, by construction of $\mathcal{B}$, (\ref{23mars2}) and Lemma \ref{lem:SecondReduction} (ii), we have
\[
\tilde{\mathcal{A}}^{\bar{\pa}} = \bigcup_{i\in I} \tilde{\mathcal{A}}_i^{\bar{\pa}}.
\]
As a consequence, since $\tilde{\mathcal{A}}^{\bar{\pa}}$ has positive measure with respect to the volume form $\eta_{\vert \mathcal{T}_{\bar{\pa}}}$ (Lemma  \ref{lem:SecondReduction} (i)), there is $i\in I$ such that $\tilde{\mathcal{A}}^{\bar{\pa}}_i$ and a compact subset  $\check{\mathcal{A}}^{\bar{\pa}}_i$ of it satisfy the same property.  We conclude the proof of (i)-(vi) by setting $\check{\mathcal{A}}^{\bar{\pa}}:= \check{\mathcal{A}}^{\bar{\pa}}_i$, $V:=V_{\pa_i}$, $\check{\Sigma} := \check{\Sigma}_i$, $h:=h_{\pa_i}$, $X:=X_{\pa_i}$, $C:=C_{\pa_i}$ and $\nu$ the volume of $\check{\mathcal{A}}^{\bar{\pa}}$ with respect to $\eta_{\vert \mathcal{T}_{\bar{\pa}}}$.

The second part of the proof (\ref{23mars1}) follows from  \cite[Proposition 3.2(ii)]{bprfirst}. By \cite[Theorem 1.1(iv)]{bprfirst}, we have
\[
\dim \left( \mbox{ker} \bigl( \omega^{\perp}_{\pa} \bigr) \right) \geq m_0 +2 \qquad \forall \pa \in \Sigma.
\]
Therefore, by \cite[Proposition 3.2(iii)]{bprfirst}, we have $\eta_{\pa}=0$ for all $\pa\in \check{\Sigma}$ and we can conclude by regularity of $h$ near the compact set $\check{\Sigma} \subset V$.
\end{proof}

The idea of our proof consists now in obtaining a contradiction from the construction of an homotopy sending smoothly the points of a small neighborhood of a set $\check{\mathcal{A}}^{\bar{\pa},c} \subset \check{\mathcal{A}}^{\bar{\pa}}$ in $\mathcal{T}_{\bar{\pa}}$ to an open subset of $Y^c$ for $c>0$ small enough. Since this homotopy has to preserve the leaves of $\vec{\mathcal{K}}_{|\mathcal{S}_0}$, we perform the construction by following the minimizing geodesics from $\mathcal{T}_{\bar{\pa}}$ to $Y^c$ with respect to some complete metric on $\mathcal{S}_0$ that needs to be built (note that $\tilde{g}$ is not complete when restricted to $\mathcal{S}_0$). The next Lemma formalizes this framework:

\begin{lemma}\label{lem:MetricReduction}
For every $0<c<\epsilon$, there are a smooth Riemannian metric $\tilde{g}^c$ on $\mathcal{S}_0$ and a compact set $\check{\mathcal{A}}^{\bar{\pa},c}\subset \check{\mathcal{A}}^{\bar{\pa}}$  satisfying the following properties:
\begin{itemize}
\item[(i)] The Riemannian manifold $(\mathcal{S}_0,\tilde{g}^c)$ is complete.

\item[(ii)] For every $\pa\in \check{\mathcal{A}}^{\bar{\pa},c}$, there  is an absolutely continuous curve $\psi:[0,1] \rightarrow \mathcal{L}_{\pa} \subset \mathcal{S}_0$ such that (where $Y^c$ is defined in Lemma \ref{lem:WitnessReduction}(iv))
\[
\psi(0)=\pa, \quad \psi(1) \in Y^c \quad \mbox{and} \quad \mbox{\em length}^{\tilde{g}}(\psi) = \mbox{\em length}^{\tilde{g}^c}(\psi) < \bar{\ell}+2.
\]

\item[(iii)] The set $\check{\mathcal{A}}^{\bar{\pa},c}$ has measure $\geq \nu/4$ with respect to the volume form $\eta_{\vert \mathcal{T}_{\bar{\pa}}}$.

\item[(iv)] Let $\mathcal{C}^{c}\subset \mathcal{S}_0$ be the set of points $\pa \in \mathcal{S}_0$ for which there is an absolutely continuous curve $\psi:[0,1]\rightarrow \mathcal{L}_{\pa}$ of length $\leq \bar{\ell}+2$ with respect to $\tilde{g}^c$ joining $\pa$ to a point of $\overline{Z^c}$ (defined in Lemma \ref{lem:WitnessReduction}(iv)). Then $\mathcal{C}^{c}$ is closed and does not intersect $\check{\mathcal{A}}^{\bar{\pa},c}$.
\end{itemize}
\end{lemma}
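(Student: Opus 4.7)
The plan has four steps: (a) use Lemma \ref{lem:WitnessReduction}(v) and (iii) to construct, for each $\pa \in \check{\mathcal{A}}^{\bar{\pa}}$ and all sufficiently small $c$, a leafwise curve from $\pa$ to some $\pb_\pa \in X^c$ of $\tilde g$-length $< \bar\ell + 2$; (b) discard an $\eta$-negligible subset of leaves meeting $\overline{Z^c}$ to obtain $\check{\mathcal{A}}^{\bar\pa,c}$; (c) build a complete metric $\tilde g^c$ agreeing with $\tilde g$ along those curves; (d) verify closedness of $\mathcal{C}^c$.

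For (a), fix $\pa \in \check{\mathcal{A}}^{\bar{\pa}}$ and take $\psi_{\pa}$ as in Lemma \ref{lem:WitnessReduction}(v). Openness of $V$ and $\psi_\pa(1)\in \check{\Sigma}\subset V$ yield $s_\pa<1$ with $\psi_\pa([s_\pa,1])\subset V$, and on $[s_\pa,1]$ the continuous function $h\circ\psi_\pa$ vanishes only at $1$ (since $\psi_\pa(t)\in \mathcal{S}_0\cap V$ for $t\in [s_\pa,1)$). Hence for every $c$ below a threshold $c(\pa)>0$, the last time $t_\pa^c\in [s_\pa,1)$ with $h(\psi_\pa(t_\pa^c))=c$ is well defined; the truncation $\psi_\pa|_{[0,t_\pa^c]}$ lies in $\mathcal{L}_\pa$, has $\tilde g$-length $\leq \bar\ell+1$, and ends at $\pa^c:=\psi_\pa(t_\pa^c)\in h^{-1}(c)$. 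Concatenating with the Lemma \ref{lem:WitnessReduction}(iii) arc at $\pa^c$ (which lies in $\mathcal{L}_\pa\cap h^{-1}(c)$ and has length $\leq 1$) yields a leafwise curve $\Psi_\pa$ of total $\tilde g$-length $<\bar\ell+2$ ending at some $\pb_\pa\in X^c$.

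For (b), let $\mathcal{L}(\overline{Z^c}) := \bigcup_{\mathfrak{c}\in \overline{Z^c}}\mathcal{L}_\mathfrak{c}$. Because $\dim\overline{Z^c}\leq r-1$ by Lemma \ref{lem:WitnessReduction}(iv) and $\vec{\mathcal{K}}|_{\mathcal{S}_0}$ is a regular analytic foliation of rank $m_0$ to which $\mathcal{T}_{\bar\pa}$ is transverse within $\mathcal{S}_0$ (of dimension $r+m_0$), the trace $\mathcal{L}(\overline{Z^c})\cap \mathcal{T}_{\bar\pa}$ has dimension $\leq r-1$ inside the $r$-dimensional $\mathcal{T}_{\bar\pa}$, hence zero $\eta$-measure (Proposition \ref{PROPomegal}(ii) makes $\eta|_{\mathcal{T}_{\bar\pa}}$ a nondegenerate $r$-form near $\bar\pa$). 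Define
\[
\check{\mathcal{A}}^{\bar{\pa},c} := \left(\check{\mathcal{A}}^{\bar{\pa}}\setminus \mathcal{L}(\overline{Z^c})\right) \cap \{\pa : c < c(\pa)\}.
\]
The second factor increases to $\check{\mathcal{A}}^{\bar\pa}$ as $c\downarrow 0$, so by continuity of measure $\eta(\check{\mathcal{A}}^{\bar{\pa},c})\geq 3\nu/4 > \nu/4$ for $c$ small enough, yielding (iii) (shrinking $\epsilon$ if necessary). For $\pa \in \check{\mathcal{A}}^{\bar{\pa},c}$, $\mathcal{L}_\pa$ avoids $\overline{Z^c}$, so $\pb_\pa\in X^c\setminus \overline{Z^c}\subset Y^c$ and no leafwise curve from $\pa$ reaches $\overline{Z^c}$, giving the $Y^c$-part of (ii) and the disjointness in (iv). For (c), let $K^c$ be a compact neighborhood of the union of the $\Psi_\pa$-images (it lies at positive $\tilde g$-distance from $\Sigma$, since the curves remain in $\check{\mathcal{A}}^{\bar\pa}$ outside $V$ and in $\overline{V\cap \{h\geq c\}}$ inside $V$). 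By partition of unity, pick a smooth function $\phi^c:\mathcal{S}_0\to[0,\infty)$ vanishing on a neighborhood of $K^c$ and proper on $\mathcal{S}_0$ (i.e.\ $\phi^c(\pa)\to \infty$ as $\pa$ approaches $\Sigma$ or escapes every compact of $\mathcal{S}_0$), and set $\tilde g^c := \tilde g + d\phi^c\otimes d\phi^c$. Then $\tilde g^c=\tilde g$ along each $\Psi_\pa$, giving the length equality in (ii); any $\tilde g^c$-Cauchy sequence has bounded $\phi^c$ and so lies in a compact subset of $\mathcal{S}_0$, proving completeness (i).

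For (d), suppose $\pa_k\to \pa\in \mathcal{S}_0$ with $\pa_k\in \mathcal{C}^c$, and reparametrize the associated leafwise paths $\psi_k$ of $\tilde g^c$-length $\leq \bar\ell+2$ by arclength so they are $1$-Lipschitz with respect to $\tilde g^c$. Completeness of $\tilde g^c$ confines them to a fixed compact set, Arzel\`a--Ascoli extracts a $C^0$-limit $\psi$ starting at $\pa$, and closedness of $\vec{\mathcal{K}}$ as a smooth subbundle of $T\mathcal{S}_0$ forces the weak derivatives of $\psi$ into $\vec{\mathcal{K}}$, so $\psi\subset \mathcal{L}_\pa$ and $\psi(1)\in \overline{Z^c}$, giving $\pa \in \mathcal{C}^c$. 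The main obstacle is the dimension/measure estimate in step (b): although the classical transverse-dimension principle yields $\dim(\mathcal{L}(\overline{Z^c})\cap \mathcal{T}_{\bar\pa})\leq r-1$, its rigorous execution requires a careful use of the local foliation charts of $\vec{\mathcal{K}}|_{\mathcal{S}_0}$ together with the subanalyticity of $\overline{Z^c}$ (the saturation $\mathcal{L}(\overline{Z^c})$ itself may fail to be subanalytic), and the $\eta$-measure lower bound $\nu/4$ additionally requires that the threshold $c(\pa)$ vary measurably with $\pa$.
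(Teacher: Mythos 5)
Your plan follows the same general shape as the paper's proof (truncate the curve from Lemma \ref{lem:WitnessReduction}(v) at a level set of $h$, concatenate with Lemma \ref{lem:WitnessReduction}(iii), discard the leaves meeting $\overline{Z^c}$, build a complete metric agreeing with $\tilde g$ near the curves, close $\mathcal{C}^c$ via Arzel\`a--Ascoli), and your dimension/Fubini argument for the saturation of $\overline{Z^c}$ is exactly the paper's. But there is a genuine gap in step (c): you assert that the union of the images of the curves $\Psi_{\pa}$, over $\pa \in \check{\mathcal{A}}^{\bar\pa,c}$, lies at positive $\tilde g$-distance from $\Sigma$, and this is not justified. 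The curve $\psi_\pa$ supplied by Lemma \ref{lem:WitnessReduction}(v) is arbitrary; nothing prevents it (and hence its truncation $\psi_\pa|_{[0,t_\pa^c]}$) from dipping arbitrarily close to $\Sigma$ before reaching the final level $h=c$, and nothing prevents the minimal distance from $\Sigma$ over these curves from tending to $0$ as $\pa$ ranges over the set. Without a uniform separation from $\Sigma$, there is no compact $K^c\subset\mathcal{S}_0$ containing all the $\Psi_\pa$, and the metric $\tilde g^c := \tilde g + d\phi^c\otimes d\phi^c$ need not equal $\tilde g$ along the curves, so assertion (ii) fails. The parenthetical justification you give --- ``the curves remain in $\check{\mathcal{A}}^{\bar\pa}$ outside $V$ and in $\overline{V\cap\{h\geq c\}}$ inside $V$'' --- does not hold: the curves leave $\check{\mathcal{A}}^{\bar\pa}$ immediately after $t=0$, and they may cross below $h=c$ inside $V$ before the last crossing $t_\pa^c$.

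The paper circumvents this with a device you are missing: define
\[
\Psi^c(\pa) := \inf\Bigl\{D\bigl(\mbox{length}^{\tilde g}(\psi)\bigr) + \max\bigl(F(\psi([0,1]))^{-1}\bigr)\Bigr\},
\]
where $D(\lambda)=1/(\bar\ell+2-\lambda)$ and $F$ is a smooth nonnegative function on $\DeltaPerp$ vanishing exactly on $\Sigma$. The second term penalizes curves approaching $\Sigma$; $\Psi^c$ is finite on $\check{\mathcal{A}}^{\bar\pa}$ and lower semi-continuous, so a sublevel set $\check{\mathcal{A}}^{\bar\pa}_k := (\Psi^c)^{-1}([0,k])\cap\check{\mathcal{A}}^{\bar\pa}$ with measure $\geq\nu/2$ exists, is automatically \emph{compact}, and all the corresponding curves remain inside the closed set $F^{-1}([1/k,\infty))$ --- uniformly separated from $\Sigma$. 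One then picks a complete metric coinciding with $\tilde g$ on $F^{-1}([1/k,\infty))$, and (ii) follows. This single functional also sidesteps the two worries you already flag (measurability of your ad hoc threshold $c(\pa)$, and compactness of $\check{\mathcal{A}}^{\bar\pa,c}$ after removing $\mathcal{L}(\overline{Z^c})$). As written, your construction does not produce a metric satisfying (i) and (ii) simultaneously.
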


\begin{proof}[Proof of Lemma \ref{lem:MetricReduction}]
Since $\Sigma$ is a closed subset of $\DeltaPerp$, we can pick a smooth function  $F:\DeltaPerp \rightarrow [0,\infty)$ such that
\[
\Sigma = F^{-1} \left(\{0\}\right)
\]
and fix some $c>0$. Consider the function $D:\R^+ \rightarrow [0,+\infty]$ given  by
\[
D(\lambda ) := \left\{ 
\begin{array}{rcl}
\frac{1}{\bar{\ell} + 2 -\lambda} & \mbox{ if } &\lambda < \bar{\ell}+2 \\
+\infty & \mbox{ if } &\lambda \geq \bar{\ell} + 2
\end{array}
\right.
\qquad \forall \lambda \in \R^+
\]
and define the function $\Psi^c : \mathcal{T}_{\bar{\pa}} \rightarrow [0,\infty]$ by
\[
\Psi^c (\pa) := \inf \Bigl\{ D \left( \mbox{length}^{\tilde{g}}(\psi) \right) + \max \left(F\bigl(\psi([0,1])\bigr)^{-1}\right) \, \vert \, \psi \in \Omega\left(\pa,\overline{X^c}\right)\Bigr\}
\]
where for every $\pa \in \mathcal{T}_{\bar{\pa}}$, $\Omega\left(\pa,\overline{X^c}\right)$ stands for the set of absolutely continuous curves $\psi:[0,1] \rightarrow \mathcal{S}_0$ such that $\psi(0)=\pa$, $\psi(1)\in \overline{X^c}$ and $\psi$ is almost everywhere tangent to $\vec{\mathcal{K}}_{|\mathcal{S}_0}$. Let $\pa \in \check{\mathcal{A}}^{\bar{\pa}}$ be fixed, by Lemma \ref{lem:WitnessReduction} (v), there is an absolutely continuous curve $\psi:[0,1] \rightarrow \DeltaPerp$ such that $\psi(0)=\pa$, $\psi(1) \in \check{\Sigma}$, $\mbox{length}^{\tilde{g}}(\psi) \leq \bar{\ell}+1$ and $\psi(t) \in \mathcal{L}_{\pa} \subset \mathcal{S}_0$ for all $t \in [0,1)$. Thus, since $\psi(t)$ belongs to $V\setminus \Sigma$ for $t$ close to $1$, Lemma \ref{lem:WitnessReduction}(iii) shows that $\pa$ can be joined to $X^c$ by a curve tangent to $\mathcal{L}_{\pa}$ contained in $\mathcal{S}_0$ of length  $< \bar{\ell}+2$. Therefore $\Psi^c(\pa)$ is finite for every $\pa \in \check{\mathcal{A}}^{\bar{\pa}}$. Moreover, the function $\Psi^c$ is lower semi-continuous on $ \mathcal{T}_{\bar{\pa}}$ (because we consider curves satisfying $\psi(1)\in \overline{X^c}$ and we may use foliation charts along $\psi$), so we have 
\[
\check{\mathcal{A}}^{\bar{\pa}} = \bigcup_{k\in \N} \check{\mathcal{A}}^{\bar{\pa}}_k \quad \mbox{with} \quad \check{\mathcal{A}}^{\bar{\pa}}_k :=  \left( \Psi^c\right)^{-1}\bigl([0,k]\bigr) \cap \check{\mathcal{A}}^{\bar{\pa}},
\]
where each set of the above union is a compact subset of $\check{\mathcal{A}}^{\bar{\pa}}$. Thus, there is $k\in \N$ such that the measure of $\check{\mathcal{A}}^{\bar{\pa}}_k$ with respect to the volume form $\eta_{\vert \mathcal{T}_{\bar{\pa}}}$ is $\geq \nu/2$ and such that for every $\pa\in \check{\mathcal{A}}^{\bar{\pa}}_k$, there is an absolutely continuous curve $\psi:[0,1] \rightarrow \mathcal{L}_{\pa} \subset \mathcal{S}_0$  satisfying $\psi(0)=\pa$, $\psi(1) \in \overline{X^c}$ and 
\[
D \left( \mbox{length}^{\tilde{g}}(\psi) \right) + \max \left(F\bigl(\psi([0,1])\bigr)^{-1}\right) \leq k,
\]
which implies 
\[
 \mbox{length}^{\tilde{g}}(\psi) < \bar{\ell} +2 \quad \mbox{and} \quad \min \left( F\bigl(\psi([0,1])\bigr) \right) \geq \frac{1}{k}.
\]
Fix a smooth complete metric $\tilde{g}^c$ on $\mathcal{S}_0$ which coincides with $\tilde{g}$ on the set $F^{-1}([1/(k),\infty))$. Recall that the definition of $Y^c$ and $Z^c$ is given in Lemma \ref{lem:WitnessReduction}(iv), and note that $\overline{X^c} = \overline{Y^c} \cup \overline{Z^c}$, where $\overline{Z^c}$ has dimension $\leq r-1$. By Lemma \ref{lem:WitnessReduction} (iv), the boundary $\partial Y^c:=\overline{Y^c}\setminus Y^c$ is contained in $Z^c$, so the set of points of $\mathcal{S}_0$ that can be joined to $\overline{Z^c}$ along absolutely continuous curves tangent to $\vec{\mathcal{K}}_{|\mathcal{S}_0}$ is a countable union of smooth submanifolds of dimension at most $r-1+m_0=2n-m-1$, so it has measure zero in $\mathcal{S}_0$ and in fact since it is invariant by the foliation associated with  $\vec{\mathcal{K}}_{|\mathcal{S}_0}$, its intersection with $\mathcal{T}_{\bar{\pa}}$ has measure zero in $\mathcal{T}_{\bar{\pa}}$ (by Fubini's Theorem). Thus, we can consider a compact subset $\check{\mathcal{A}}^{\bar{\pa},c} $ of  $ \check{\mathcal{A}}^{\bar{\pa}}_k \subset \check{\mathcal{A}}^{\bar{\pa}}$ of measure $\geq \nu/4$ such that the properties (i)-(iii) are satisfied. Finally, the set $\mathcal{C}^c$ is closed because $\partial Y^c$ is closed and $(\mathcal{S}_0,\tilde{g}^c)$ is complete. We conclude that $\check{\mathcal{A}}^{\bar{\pa},c} $ satisfies (iv) by construction.
\end{proof}

Given $0<c<\epsilon$, we define the function $D^c:\mathcal{S}_0\setminus \mathcal{C}^c  \rightarrow [0,\infty]$ by 
 \[
 D^c(\pa ) := \inf \Bigl\{ \mbox{length}^{\tilde{g}^c}(\psi ) \, \vert \, \psi :[0,1] \rightarrow \mathcal{L}_{\pa} \mbox{ abs. cont.  } \, \psi(0)=\pa, \, \psi(1) \in Y^c \Bigr\},
 \]
 for every $\pa \in \mathcal{S}_0 \setminus \mathcal{C}^c$, and we denote its domain, the set of points $\pa \in \mathcal{S}_0\setminus C^c$, where $D^c(\pa)$ is finite, by $\mbox{dom}(D^c)$. Then, we call $\mathcal{L}_{\pa}$-geodesic a curve $\psi:[0,1] \rightarrow \mathcal{L}_{\pa}$ which is geodesic with respect to the metric $\tilde{g}^{c,\pa}$ induced by $\tilde{g}^c$ on $\mathcal{L}_{\pa}$, for any point $\pa \in Y^c$ we denote by $\exp_{\pa}^c: T_{\pa}\mathcal{L}_{\pa} \rightarrow \mathcal{L}_{\pa}$ the exponential map from $\pa$ with respect to $\tilde{g}^{c,\pa}$ and by considering $\vec{\mathcal{K}}_{Y^c}$ as a subbundle of $T\DeltaPerp$ (that is, for every $\pa \in Y^c$ we take $\vec{\mathcal{K}}(\pa)$) we define the smooth mapping $\mbox{Exp}^c: \vec{\mathcal{K}}_{Y^c} \rightarrow \mathcal{S}_0$ by
 \[
 \mbox{Exp}^c (\pa,\zeta) := \exp_{\pa}^c  (\zeta) \qquad \forall (\pa,\zeta) \in \vec{\mathcal{K}}_{Y^c}.
 \]
By completeness of $(\mathcal{S}_0,\tilde{g}^c)$, see Lemma \ref{lem:MetricReduction} (i), for every $\pa \in \mbox{dom}(D^c)$ any sequence $\{\psi_k:[0,1] \rightarrow \mathcal{L}_{\pa}\}_{k\in \N}$ of absolutely continuous curves such that 
 \[
 \psi_k(0)=\pa, \quad \psi_k(1) \in Y^c \quad \mbox{and} \quad \lim_{k\rightarrow \infty}  \mbox{length}^{g^c}(\psi_k ) =D^c(\pa),
 \]
 converges, up to taking a subsequence, to a $\mathcal{L}_{\pa}$-geodesic $\bar{\psi}$, called minimizing geodesic for $D^c(\pa)$, satisfying  $\bar{\psi}(0)=\pa$, $\bar{\psi}(1) \in \overline{Y^c}$ and $\mbox{length}^{g^c}(\psi_k ) =D^c(\pa)$. Moreover if in addition $D^c(\pa) < \bar{l}+2$ then we have $\bar{\psi}(1) \in Y^c$ because $\pa \in \mbox{dom}(D^c) \subset \mathcal{S}_0 \setminus \mathcal{C}^c$. For every $\pa \in \mbox{dom}(D^c)$, we set
\begin{equation}\label{eq:GammaI}
\begin{aligned}
\Gamma^c(\pa) & \quad \text{the set of all minimizing geodesics for} \,D^c(\pa),\\
\mathcal{I}^c(\pa) &:= \Bigl\{ \psi(t) \, \vert \, \psi \in \Gamma^c(\pa), \, t \in [0,1]\Bigr\}.
\end{aligned}
\end{equation} 
 By completeness of $(\mathcal{S}_0,\tilde{g}^c)$ and regularity of the foliation given by $\vec{\mathcal{K}}_{|\mathcal{S}_0}$, the mapping $\pa \in \mbox{dom}(D^c) \mapsto \mathcal{I}^c(\pa)$ has closed graph. Moreover, by the above construction and properties (iii)-(iv) of Lemma \ref{lem:MetricReduction}, the set 
 \[
 \mathcal{I}^c\left(\check{\mathcal{A}}^{\bar{\pa},c}\right) := \bigcup_{\pa \in \check{\mathcal{A}}^{\bar{\pa},c}}  \mathcal{I}^c(\pa)
 \]
 is a  compact subset of $\mathcal{S}_0$ which is contained in $\mbox{dom}(D^c)$. The following lemma follows from classical results on distance functions from submanifolds in Riemannian geometry (see Figure \ref{fig2}).
 
 \begin{lemma}\label{lem:MetricProperties}
 For every $0<c<\epsilon$, there are a relatively compact open subset $\mathcal{V}^{c,\bar{\pa}}$ of $\mathcal{T}_{\bar{\pa}}$  containing $ \check{\mathcal{A}}^{\bar{\pa},c}$, an open neighborhood $\mathcal{H}^c$ of $\bar{\pa}$ in $\mathcal{L}_{\bar{\pa}}$, an open set $\mathcal{U}^c\subset \mathcal{W}$  and a set $F^c \subset \mathcal{U}^c$ satisfying the following properties:
 \begin{itemize}
 \item[(i)] The set $F^c \subset \mathcal{U}^c$ is closed with respect to the induced topology on $\mathcal{U}^c$.
 \item[(ii)] The set $F^c$ has Lebesgue measure zero in $\mathcal{U}^c$.
 \item[(iii)] The function $D^c$ is smooth on the open set (recall the notation introduced for local transition maps \eqref{24mars5})
 \[
 \mathcal{U}^c \setminus F^c \quad \mbox{with} \quad \mathcal{U}^c := \bigcup_{\pa \in \mathcal{H}^c} T^{\bar{\pa},\pa}(\mathcal{V}^{c,\bar{\pa}}) = \bigcup_{\pa \in \mathcal{H}^c} \mathcal{V}^{c,\bar{\pa}}_{\pa} \subset \mathcal{W}
 \]
and for every $\pa \in \mathcal{U}^c \setminus F^c$ the set  $\Gamma^c(\pa)$ given in \eqref{eq:GammaI} is a singleton $\{\psi^{c,\pa}\}$, where  $\psi^{c,\pa}:[0,1] \rightarrow \mathcal{L}_{\pa}$ is the $\mathcal{L}_{\pa}$-geodesic (uniquely) defined by the initial conditions 
 \[
 \psi^{c,\pa}(0)=\pa \quad \mbox{and} \quad \dot{\psi}^{c,\pa}(0)=-\nabla D^c_{\pa}(\pa)
 \]
 ($\nabla D^c_{\pa}$ stands for the gradient of $D^c_{\pa}$ with respect to $g^{c,\pa}$).
\item[(iv)] For every $\pa \in \mathcal{H}^c$, the mapping 
\[
\begin{array}{rcl}
H^c \, : \,  \left( \left( \mathcal{U}^c\setminus F^c\right) \cap \mathcal{T}_{\pa}\right) \times [0,1] \, & \longrightarrow & \,  \mathcal{S}_0\\
(\pa',t)  \,  & \longmapsto  & \,  \psi^{c,\pa'} (t)  
\end{array}
\]
is a smooth diffeomorphism onto its image. 
 \end{itemize} 
 \end{lemma}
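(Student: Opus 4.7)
The statement is a classical result on the regularity of the distance function from a smooth submanifold, adapted to our leafwise setting. The approach is to work leaf by leaf using the normal exponential map from $Y^{c}$, then to use transversality of $Y^{c}$ and $\vec{\mathcal{K}}$ to assemble the leafwise constructions into a smooth diffeomorphism on an open set of $\mathcal{W}$.

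Recall that by Lemma \ref{lem:WitnessReduction}(iv) the set $Y^{c}$ is a finite union of smooth pieces $Y_{i}^{c}$ of dimension $r=2n-m-m_{0}$ satisfying the transversality condition $T_{\pa}Y_{i}^{c}\oplus \vec{\mathcal{K}}(\pa)=T_{\pa}\DeltaPerp$. Consider the vector bundle $\vec{\mathcal{K}}_{Y^{c}}\to Y^{c}$ of total dimension $r+m_{0}=2n-m=\dim\DeltaPerp$ and the smooth leafwise exponential map $\mathrm{Exp}^{c}:\vec{\mathcal{K}}_{Y^{c}}\to \mathcal{S}_{0}$ given by $\mathrm{Exp}^{c}(\pa,\zeta)=\exp_{\pa}^{c}(\zeta)$. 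By the transversality just recalled, the differential of $\mathrm{Exp}^{c}$ at $(\pa,0)$ splits as $T_{\pa}Y_{i}^{c}\oplus \vec{\mathcal{K}}(\pa)$, hence $\mathrm{Exp}^{c}$ is a local diffeomorphism along the zero section of $\vec{\mathcal{K}}_{Y^{c}}|_{Y^{c}}$. Away from the zero section, its critical set (the focal set of $Y^{c}$ in the leafwise geometry) has Lebesgue measure zero in $\vec{\mathcal{K}}_{Y^{c}}$ by Sard's theorem applied fiberwise together with Fubini.

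The first step is to define $F^{c}$. Set
\[
\mathcal{N}^{c}:=\bigl\{(\pa,\zeta)\in \vec{\mathcal{K}}_{Y^{c}}\,:\,\zeta\neq 0,\, t\mapsto \mathrm{Exp}^{c}(\pa,t\zeta),\,t\in[0,|\zeta|^{\tilde g^{c}}],\text{ minimizes leafwise distance to }Y^{c}\bigr\},
\]
and let $\mathcal{N}^{c}_{\mathrm{reg}}\subset \mathcal{N}^{c}$ be the open subset where the corresponding geodesic is strictly minimizing (unique) and the endpoint is a regular point of $\mathrm{Exp}^{c}$ contained in some $Y_{i}^{c}$ (not in the boundary $\partial Y^{c}\subset Z^{c}$). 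Choose an open neighborhood $\mathcal{H}^{c}\subset \mathcal{L}_{\bar\pa}\cap\mathcal{W}$ of $\bar\pa$ and an open neighborhood $\mathcal{V}^{c,\bar\pa}\subset \mathcal{T}_{\bar\pa}$ of the compact set $\check{\mathcal{A}}^{\bar\pa,c}$ small enough that $\mathcal{U}^{c}:=\bigcup_{\pa\in\mathcal{H}^{c}}\mathcal{V}^{c,\bar\pa}_{\pa}$ is contained in the image of $\mathrm{Exp}^{c}|_{\mathcal{N}^{c}}$; this is possible by Lemma \ref{lem:MetricReduction}(ii),(iv) together with continuity of $D^{c}$. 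Finally set $F^{c}:=\mathcal{U}^{c}\setminus \mathrm{Exp}^{c}(\mathcal{N}^{c}_{\mathrm{reg}})$.

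The second step is to verify properties (i)--(iv). The closedness of $F^{c}$ in $\mathcal{U}^{c}$ (property (i)) follows from the compactness of minimizing geodesics of bounded length granted by Lemma \ref{lem:MetricReduction}(i),(iv) together with the closed graph of $\pa\mapsto \mathcal{I}^{c}(\pa)$: failure of uniqueness, occurrence of a focal point, or hitting $\overline{Z^{c}}$ is preserved in the limit. The measure-zero property (ii) is obtained by noting that $F^{c}$ leaf-by-leaf coincides with the union of the leafwise cut locus of $Y^{c}\cap\mathcal{L}_{\pa}$ (a set of measure zero in the leaf by the classical Riemannian argument, e.g.\ \cite{riffordbook}) and the projection of the focal set, itself of measure zero in the leaf; applying Fubini with respect to the product decomposition $\mathcal{U}^{c}\simeq \mathcal{H}^{c}\times \mathcal{V}^{c,\bar\pa}$ yields (ii). For (iii), on $\mathcal{U}^{c}\setminus F^{c}$ uniqueness of the minimizer combined with the standard first-variation formula gives that $D^{c}$ is smooth and $\nabla D^{c}_{\pa}(\pa')$ equals the negative initial velocity of the unique $\mathcal{L}_{\pa'}$-minimizing geodesic, which is exactly the characterization given in the statement. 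Property (iv) then follows because $\mathrm{Exp}^{c}:\mathcal{N}^{c}_{\mathrm{reg}}\to \mathcal{U}^{c}\setminus F^{c}$ is a smooth diffeomorphism, and $H^{c}$ is, up to reparametrization of the geodesic by arc length, the restriction of its inverse composed with $(\pa,\zeta,t)\mapsto \mathrm{Exp}^{c}(\pa,t\zeta)$ to a transverse slice $\mathcal{T}_{\pa}\times[0,1]$.

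The main obstacle is the presence of the singular part $\overline{Z^{c}}\subset \partial Y^{c}$: a priori a minimizing geodesic from $\pa'\in\mathcal{U}^{c}$ could end on $\overline{Z^{c}}$, which is not a smooth target and would spoil the smoothness of $D^{c}$. This is precisely why Lemma \ref{lem:MetricReduction}(iv) was engineered so that the closed set $\mathcal{C}^{c}$ of points admitting a curve of length $\leq \bar\ell+2$ to $\overline{Z^{c}}$ is disjoint from $\check{\mathcal{A}}^{\bar\pa,c}$; by shrinking $\mathcal{V}^{c,\bar\pa}$ and $\mathcal{H}^{c}$ we ensure that $\mathcal{U}^{c}$ stays away from $\mathcal{C}^{c}$, so that minimizers land in the smooth part of $Y^{c}$ and the witness section $Y_{i}^{c}$ can legitimately play the role of an auxiliary submanifold from which the leafwise normal exponential is studied.
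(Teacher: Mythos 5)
Your construction of $F^c$ via the regular locus of the leafwise normal exponential map $\mathrm{Exp}^c$ from $Y^c$ is a dual but essentially equivalent reformulation of the paper's cut-locus approach (the paper's (P1)--(P4) about $D^c_{\pa}$ and its cut locus, versus your $\mathcal{N}^c_{\mathrm{reg}}$ and its image). Items (i)--(iii), including the shrinking of $\mathcal{U}^c$ via Lemma \ref{lem:MetricReduction}(iv) to keep minimizers away from $\overline{Z^c}$, are handled in the same spirit as the paper and correctly.

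However, there is a genuine gap in your proof of item (iv). You conclude that $H^c$ is a diffeomorphism ``because $\mathrm{Exp}^c : \mathcal{N}^c_{\mathrm{reg}} \to \mathcal{U}^c \setminus F^c$ is a smooth diffeomorphism,'' but that bijection only controls $\mathrm{Exp}^c$ on the set $\mathcal{N}^c_{\mathrm{reg}}$ of \emph{full-length} minimizers arriving in $\mathcal{U}^c \setminus F^c$. It says nothing about injectivity of the enlarged map $(\pa,\zeta,t) \mapsto \mathrm{Exp}^c(\pa, t\zeta)$ away from $t=1$, and in particular it does not rule out that two distinct $\pa_1,\pa_2 \in (\mathcal{U}^c\setminus F^c)\cap\mathcal{T}_{\pa}$ sit on the \emph{same} leaf $\mathcal{L}_{\pa_1}=\mathcal{L}_{\pa_2}$ with their minimizing geodesics to $Y^c$ arriving at the same point (or, what is equivalent after the standard interior-uniqueness argument, lying on a single geodesic). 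A leaf of $\vec{\mathcal{K}}_{|\mathcal{S}_0}$ may well revisit the transverse section $\mathcal{T}_{\pa}$, so this is a real obstruction. The paper disposes of it by observing that such $\pa_1,\pa_2$ would be $(\vec{\mathcal{K}},2\bar\ell+4)$-related, which is excluded after further shrinking $\mathcal{U}^c$ by Lemma \ref{lem:SecondReduction}(iii) --- i.e.\ precisely the place where the splittability hypothesis of Theorem \ref{thm:Sardminrank} enters the argument. Your sketch never invokes Lemma \ref{lem:SecondReduction}(iii) or splittability, so in effect you would be proving Lemma \ref{lem:MetricProperties}, and hence Theorem \ref{thm:Sardminrank}, without the splittability assumption, which is not possible.
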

 
\begin{figure}[H]
\begin{center}
\includegraphics[width=10cm]{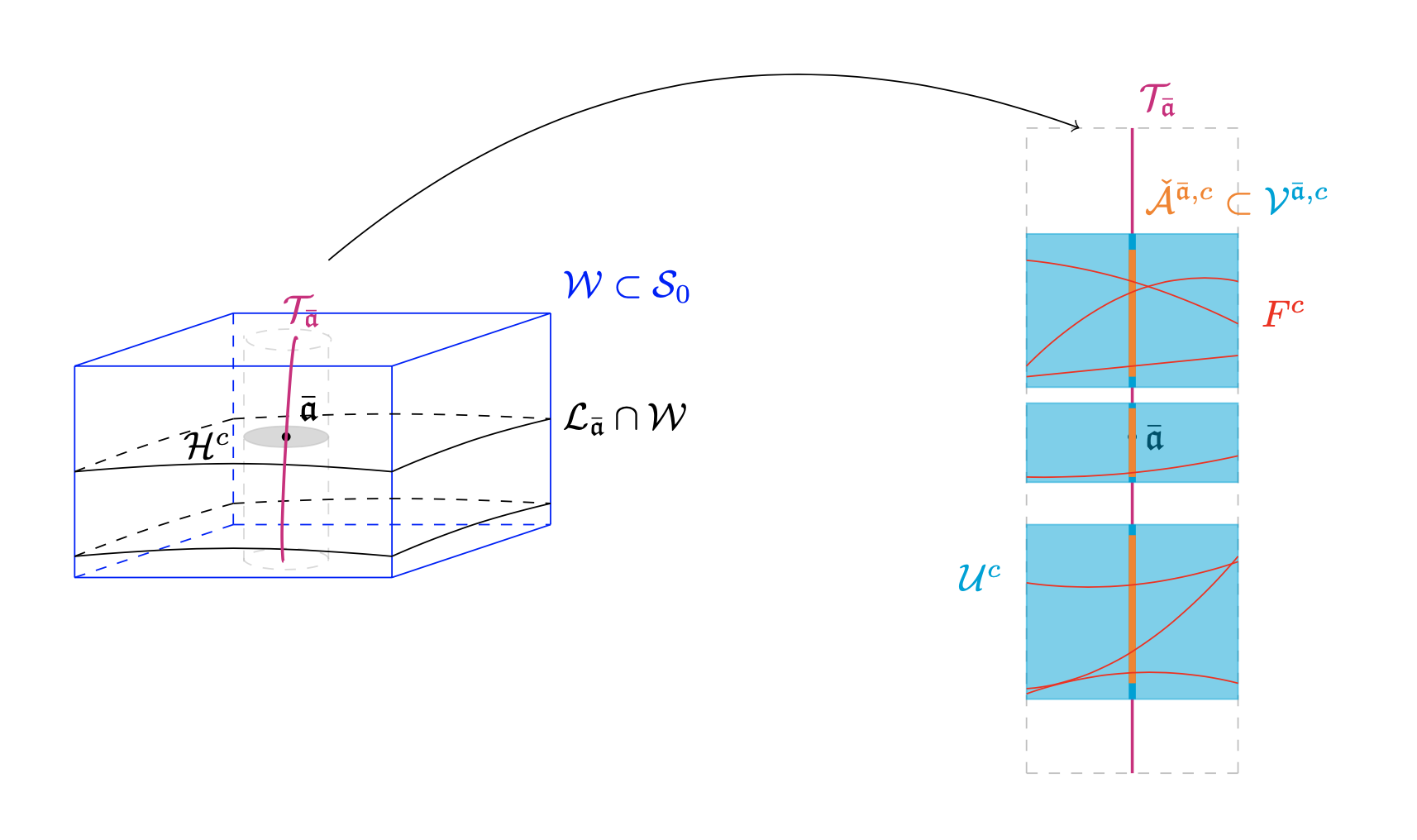}
\caption{A picture to illustrate Lemma \ref{lem:MetricProperties} \label{fig2}}
\end{center}
\end{figure}

 \begin{proof}[Proof of Lemma \ref{lem:MetricProperties}]
 The set $ \mathcal{I}^c\left(\check{\mathcal{A}}^{\bar{\pa},c}\right)  \cap \overline{Y^c}$ is a compact set which does not intersect $\partial Y^c$, so there is an open set $\mathcal{O}\subset \mathcal{S}_0$ which contains $\partial Y^c$ and such that $ \mathcal{I}^c\left(\check{\mathcal{A}}^{\bar{\pa},c}\right)  \cap Y^c \subset Y^c \setminus \overline{\mathcal{O}}$. As a consequence, by regularity of the mapping $\pa \in \mbox{dom}(D^c) \mapsto \mathcal{I}^c(\pa)$, there is an open set $\mathcal{U}^c\subset \mathcal{W}$ containing $\check{\mathcal{A}}^{\bar{\pa},c}$ such that
 \[
 D^c(\pa) < \bar{l}+2 \quad \mbox{and} \quad \mathcal{I}^c(\pa) \cap \overline{Y^c} \subset Y^c \setminus \overline{\mathcal{O}} \qquad \forall \pa \in \mathcal{U}^c.
 \]
In fact, for every $\pa \in \mathcal{U}^c$, the restriction of $D^c$ to the local leaf $\mathcal{L}_{\pa}\cap \mathcal{U}^c$, let us denote it by $D^c_{\pa}$,  coincides with the distance function to the set $\widetilde{Y}^{c,\pa}:=\mathcal{L}_{\pa} \cap (Y^c\setminus \overline{\mathcal{O}})$ which, by the transversality property given by Lemma \ref{lem:WitnessReduction} (iv) and compactness of $Y^c\setminus \mathcal{O} \subset Y^c$, is the union of finitely many points. So, as a distance function from a smooth submanifold (of dimension zero) on a complete Riemannian manifold, for every $\pa \in \mathcal{U}^c$ the function $D^c_{\pa}$ satisfies the following properties:
 \begin{itemize}
 \item[(P1)] The function $D^{c}_{\pa}$ is locally lipschitz on $\hat{\mathcal{L}}_{\pa} := \mathcal{L}_{\pa} \cap \mathcal{U}^c$ and its singular set $\Sigma(D^c_{\pa})$, defined as the set of points in $\hat{\mathcal{L}}_{\pa}$ where $D^{c}_{\pa}$ is not differentiable, has measure zero in $\hat{\mathcal{L}}_{\pa}$.
 \item[(P2)] Denoting by $\nabla D^c_{\pa}$ the gradient of $D^c_{\pa}$ with respect to $g^{c,\pa}$, define the limiting-gradient of $D^c_{\pa}$ at some point $\pa' \in \hat{\mathcal{L}}_{\pa}$, denoted by $\nabla^LD^c_{\pa}(\pa') \subset T_{\pa'}\mathcal{L}_{\pa}$, as the set of all limits in $T_{\pa'}\mathcal{L}_{\pa}$ of sequences of the form $\{\nabla D^c_{\pa}(\pa_k)\}_{k\in \N} \in T_{\pa_k}\mathcal{L}_{\pa}$ where $\{a_k\}_{k\in \N}$ is a sequence of points in $\hat{\mathcal{L}}_{\pa} \setminus  \Sigma(D^c_{\pa})$ converging to $\pa'$ (note that by (P1) such sequences do exist). Then a point $\pa' \in \hat{\mathcal{L}}_{\pa}$ belongs to $\Sigma(D^c_{\pa})$ if and only if $\nabla^LD^c_{\pa}(\pa')$ is not a singleton. Moreover, for every $\pa' \in \hat{\mathcal{L}}_{\pa}$, there is a one-to-one correspondence between $\nabla^LD^c_{\pa}(\pa')$ and $\Gamma^c(\pa')$ (the set of all minimizing geodesics for $D^c(\pa')$), namely a vector $\zeta' \in T_{\pa'}\hat{\mathcal{L}}_{\pa'}$ belongs to $\nabla^LD^c_{\pa}(\pa')$ if and only if the $\mathcal{L}_{\pa}$-geodesic $\psi^{\pa',\zeta'}:[0,1] \rightarrow \mathcal{L}_{\pa}$ (uniquely) defined by the initial conditions
 \[
 \psi^{\pa',\zeta'}(0)=\pa' \quad \mbox{and} \quad \dot{\psi}^{\pa',\zeta'}(0)=-\zeta'
 \]
 is a minimizing geodesic for $D^c(\pa')$. Moreover, every such geodesic satisfies 
 \[
 \psi^{\pa',\zeta'}(1) \in Z^{c,\pa}  \quad \mbox{and} \quad \psi^{\pa',\zeta'}(t) = \beta^{\pa',\zeta'}(1-t) \quad \forall t \in [0,1]
 \]
 where $\beta^{\pa',\zeta'}$ is the $\mathcal{L}_{\pa}$-geodesic given by 
 \[
 \beta(t) :=  \exp_{P(\pa',\zeta')}^c\left(V(\pa',\zeta')\right) \quad \forall t \in [0,1]
 \]
 with 
 \[
 P(\pa',\zeta') := \psi^{\pa',\zeta'}(1) \in Z^{c,\pa}  \quad \mbox{and} \quad V(\pa',\zeta') := -\dot{\psi}^{\pa',\zeta'}(1) \in T_{P(\pa',\zeta')} \mathcal{L}_{\pa}.
 \]
  \item[(P3)] Let $\mbox{Conj}^{c,\pa}(\hat{\mathcal{L}_{\pa}}) \subset \hat{\mathcal{L}_{\pa}}$ be the set of points $\pa'\in \hat{\mathcal{L}_{\pa}}$  for which there is $\zeta' \in \nabla^LD^c_{\pa}(\pa')$, called conjugate limiting-gradient of $D^c_{\pa}$ at $\pa'$, such that the tangent vector $V(\pa',\zeta')$ is a critical point of  the exponential map $\exp_{P(\pa',\zeta')}^c$. Then we have 
 \[
\mbox{Cut}^{c,\pa} \left(\hat{\mathcal{L}_{\pa}}\right) := \overline{ \Sigma(D^c_{\pa})} = \Sigma(D^c_{\pa}) \cup \mbox{Conj}^{c,\pa} \left(\hat{\mathcal{L}_{\pa}}\right).
 \] 
 \item[(P4)] The set $\mbox{Cut}^{c,\pa}(\hat{\mathcal{L}_{\pa}})$, called cut locus in $\hat{\mathcal{L}_{\pa}}$, has Lebesgue measure zero in $\hat{\mathcal{L}}_{\pa}$ and the function $D^c_{\pa}$ is smooth on $\hat{\mathcal{L}}_{\pa} \setminus \mbox{Cut}^{c,\pa}(\hat{\mathcal{L}_{\pa}})$. In particular, for every $\pa' \in \hat{\mathcal{L}}_{\pa}\setminus \mbox{Cut}^{c,\pa}(\hat{\mathcal{L}_{\pa}})$, the set $\nabla^LD^c_{\pa}(\pa')$ is a singleton $\{\zeta(\pa')\}$ and moreover the exponential map $\exp_{P(\pa',\zeta')}^c:T_{P(\pa',\zeta')}\mathcal{L}_{\pa} \rightarrow \mathcal{L}_{\pa}$ is a submersion at $V(\pa',\zeta')$.
 \end{itemize}
 
 The property (P1) follows from Rademacher's Theorem, (P2) may be found in \cite[Lemma 11]{rifford08} (where the result is stated with Hamiltonian viewpoint) and (P3)-(P4) may be found in \cite{cr10,mm03,sakai96}.\\
  
 To conclude the proof of the lemma, we define the set $F^c\subset \mathcal{U}^c$ by
 \[
 F^c := \bigcup _{\pa \in \mathcal{U}^c} \mbox{Cut}^{c,\pa}(\hat{\mathcal{L}_{\pa}}).
 \]
 Let us prove (i), that is, $F^c$ is closed in the topological subspace $\mathcal{U}^c$. Let $\{\pa_k\}_{k\in \N}$ be a sequence of points of $F^c$ converging to some $\pa \in \mathcal{U}^c$. Let us distinguish two cases:\\
 
\noindent Case 1:  There is a constant $\delta>0$ such that the diameters (with respect to $\tilde{g}^c$) of the sets $\nabla^LD^c_{\pa_k}(\pa_k)$ are all larger than $\delta$  (so that for all $k\in \N$, $\pa_k$ belongs to $\Sigma(D^c_{\pa_k})$).\\
Then $\pa$ admits two minimizing geodesics $\psi_1, \psi_2$ for $D^c(\pa)$  such that 
 \[
 \left|\dot{\psi}_1(0)-\dot{\psi}_1(0)\right|^{\tilde{g}^c}\geq \delta >0
 \]
 so $\nabla^LD^c_{\pa_k}(\pa)$ is not a singleton (by (P2)) and $\pa$ belongs to $\Sigma(D^c_{\pa})\subset F^c$. \\
 
 \noindent Case 2: There is not a constant $\delta>0$ such that the diameters (with respect to $\tilde{g}^c$) of the sets $\nabla^LD^c_{\pa_k}(\pa_k)$ are all larger than $\delta$  (so that for all $k\in \N$, $\pa_k$ belongs to $\Sigma(D^c_{\pa_k})$).\\
Then we have
 \[
\liminf_{k \rightarrow \infty} \mbox{diam}^{\tilde{g}^c} \nabla^LD^c_{\pa_k}(\pa_k) =0.
\] 
Let us again distinguish between two cases.\\

\noindent Subcase 2.1: There are infinitely many $k\in \N$ for which $\pa_k$ belongs to $\mbox{Conj}^{c,\pa_k}(\hat{\mathcal{L}}_{\pa_k})$.\\
Then, by considering a subsequence  of $\{V(\pa_k,\zeta_k)\}_{k\in \N}$ with $\zeta_k$ a conjugate limiting-gradient of $D^c_{\pa_k}$ at $\pa_k$, there is a tangent vector $V(\pa,\zeta)$ which is a critical point of  the exponential map $\exp_{P(\pa,\zeta)}^c$ as limit of the sequence of critical vectors $\{V(\pa_k,\zeta_k)\}_{k\in \N}$ (with respect to $\exp_{P(\pa_k,\zeta_k)}^c$). Therefore, $\pa$ belongs to $\mbox{Conj}^{c,\pa}(\hat{\mathcal{L}}_{\pa}) \subset F^c$ by (P3). \\

\noindent Subcase 2.2: The set of $k\in \N$ for which $\pa_k$ belongs to $\mbox{Conj}^{c,\pa_k}(\hat{\mathcal{L}}_{\pa_k})$ is finite.\\
If  $\pa \notin  \mbox{Cut}^{c,\pa}(\hat{\mathcal{L}_{\pa}})$, then by (P2) the limiting-gradient $\nabla^LD^c_{\pa}(\pa)$ is equal to a singleton $\{\zeta\}$ and there is only one minimizing geodesics for $D^c(\pa)$ given by $\psi^{\pa,\zeta}$. Thus, by (P3), up to considering a subsequence,  we may assume without loss of generality that for all $k\in \N$ there are $\zeta_k^1, \zeta_k^2$ in $\nabla^LD^c_{\pa_k}(\pa_k)$ such that 
\[
\zeta_k^1 \neq \zeta_k^2 \quad \mbox{and} \quad \lim_{k \rightarrow \infty}  \left|\zeta_k^1-\zeta_k^2\right|^{\tilde{g}^c} = 0
\]
and for $i=1,2$
\[
\lim_{k \rightarrow \infty} \zeta_i^k = \zeta, \quad \lim_{k \rightarrow \infty} P(\pa_k,\zeta_i^k)  =   P(\pa,\zeta), \quad \lim_{k \rightarrow \infty} V(\pa_k,\zeta_i^k) =V(\pa,\zeta).
\]
Since $\pa \notin  \mbox{Cut}^{c,\pa}(\hat{\mathcal{L}_{\pa}})$, (P4) shows that the exponential map $\exp_{P(\pa,\zeta)}^c:T_{P(\pa,\zeta)}\mathcal{L}_{\pa} \rightarrow \mathcal{L}_{\pa}$ is a submersion at $V(\pa,\zeta)$. So the mappings $\exp_{P(\pa_k,\zeta_k)}^c:T_{P(\pa_k,\zeta_k)}\mathcal{L}_{\pa_k} \rightarrow \mathcal{L}_{\pa_k}$ are submersions at $V(\pa_k,\zeta_k)$ for $k$ large enough but this is impossible because
\[
\exp_{P(\pa_k,\zeta_k)}^c \left(V(\pa_k,\zeta_1^k)\right)  = \exp_{P(\pa_k,\zeta_k)}^c \left(V(\pa_k,\zeta_2^k)\right)   \qquad \forall k \in \N.
\]
So we have  $\pa \in  \mbox{Cut}^{c,\pa}(\hat{\mathcal{L}_{\pa}})$.\\

To prove (ii), we just notice that $\mathcal{U}^c$ is foliated by the leaves $\hat{\mathcal{L}}_{\pa}$ whose intersection with $F^c$ has measure zero by (P4). So, we get the result by a Fubini argument. 

The point (iii) is a consequence of the fact that $\nabla^LD^c_{\pa}(\pa)$ is a singleton $\{\zeta^{\pa}\}$ for all $\pa$ in the open set $\mathcal{U}^c \setminus F^c$ together with the fact that $\exp_{P(\pa,\zeta^{\pa})}^c:T_{P(\pa,\zeta^{\pa})}\mathcal{L}_{\pa} \rightarrow \mathcal{L}_{\pa}$ is a submersion at $V(\pa,\zeta^{\pa})$ which implies that the mapping $\mbox{Exp}^c$ is a submersion at $(P(\pa,\zeta^{\pa}), V(\pa,\zeta^{\pa}))$. As a matter of fact, if $\pa \in \mathcal{U}^c \setminus F^c$ is fixed, then there is an open neighborhood $N$ of $(P(\pa,\zeta^{\pa}), V(\pa,\zeta^{\pa}))$ in $\vec{\mathcal{K}}_{Y^c} \subset T\DeltaPerp$ such that the image $\mbox{Exp}^c(N)$ is an open neighborhood of $\pa$ and we have necessarily for every $(P,V)\in N$,
\[
\nabla^LD^c_{A}(A) = \left\{\zeta^{A}\right\} = \left\{-\dot{\psi}^{(P,V)}(0)\right\} \quad \mbox{with} \quad A=A(P,V):=\mbox{Exp}^c (P,V),
\]
where $\psi^{(P,V)}:[0,1] \rightarrow \mathcal{L}_{(P,V)}$ is the $\mathcal{L}_{(P,V)}$-geodesic given by 
\[
\psi^{(P,V)}(t) := \mbox{Exp}^c (P,(1-t)V) = \exp_{(P,V)}^c  ((1-t)V) \qquad \forall t \in [0,1],
\]
because $\psi^{(P,V)}$ is the only $\mathcal{L}_{(P,V)}$-geodesic closed to $\psi^{\pa,\zeta^{\pa}}$ joining $A$ to $Z^{c,A}$. Since the mapping 
\[
A \, \longmapsto \, - \frac{d}{dt} \left\{ \psi^{\bigl(\mbox{Exp}^c\bigr)^{-1}(A)} \right\} (0)
\]
is smooth we infer that  $D^c$ is smooth on $\mathcal{U}^c\setminus F^c$ and that  $\Gamma^c(\pa)$  is a singleton for $\pa \in \mathcal{U}^c\setminus F^c$ because $\nabla^LD^c_{\pa}(\pa)$ is always a singleton (see (P1)). 

To prove (iv), we first notice that, up to shrinking $\mathcal{U}^c$, we may assume that for every $\pa \in \mathcal{H}^c$, the mapping $H^c$ is injective. As a matter of fact, suppose for contradiction that there are $\pa\in \mathcal{H}^c$ and 
\[
(\pa_1,t_1), (\pa_2,t_2) \in  \left( \left( \mathcal{U}^c\setminus F^c\right) \cap \mathcal{T}_{\pa}\right) \times [0,1]
\] 
such that
\[
H^c(\pa_1,t_1) =  \psi^{c,\pa_1} (t_1) =  \psi^{c,\pa2} (t_2)  =H^c(\pa_2,t_2).   
\]
Since $\psi^{c,\pa_1}$ and $\psi^{c,\pa_2}$ are minimizing the length (among curves with are horizontal with respect to the foliation) we have either $\pa_1=\pa_2$ and  $\psi^{c,\pa_1}  =  \psi^{c,\pa2} $ (because  $\Gamma^c(\pa_1)=\Gamma^c(\pa_2)$  is a singleton), or we have $\pa_1\neq \pa_2$ and $t_1=t_2=1$. In the latter case, we infer that $\pa_1$ and $\pa_2$ belong to the same leaf $\mathcal{L}_{\pa_1}=\mathcal{L}_{\pa_2}$ and can be connect by a curve horizontal (with respect to $\mathcal{L}_{\pa_1}$) of length $<2\bar{\ell}+4$. By Lemma \ref{lem:SecondReduction} (iii), this cannot occur if the open neighborhood $\mathcal{U}^c\subset \mathcal{W}$ of $\check{\mathcal{A}}^{\bar{\pa},c}$ is sufficiently small. The smoothness of $H^c$ follows from (iii) and the property of diffeomorphism is a consequence of the fact that all minimizing curves from $\mathcal{U}^c$ to $\bar{Y}^c\setminus \mathcal{O}$ have no conjugate times. 

We conclude easily the construction of $\mathcal{V}^{c,\bar{\pa}} \subset \mathcal{T}_{\bar{\pa}}$ and $\mathcal{H}^c$ of $\bar{\pa} \subset \mathcal{L}_{\bar{\pa}}$. 
 \end{proof}

The following lemma will allow us to conclude the proof of Theorem \ref{thm:Sardminrank}, it follows easily from Lemma \ref{lem:MetricProperties}.

\begin{lemma}\label{lem:MainReduction}
For every $0<c<\epsilon$, there are $\pa^c \in \mathcal{L}_{\bar{\pa}}\cap \mathcal{W}$, a finite set $J^c$, two collections of sets $\{\mathcal{O}_j^{c,0}\}_{j\in J^c}$, $\{\mathcal{O}_j^{c,1}\}_{j \in J^c}$ and a collection of functions $\{\Phi_j^c:\mathcal{O}_j^{c,0} \times [0,1] \rightarrow \mathcal{S}_0\}_{j \in J^c}$ satisfying the following properties: 
\begin{itemize}
\item[(i)]  The sets $\mathcal{O}_j^{c,0}$ (with $j\in J^c$) are pairwise disjoint.
\item[(ii)] The sets $\mathcal{O}_j^{c,1}$ (with $j\in J^c$) are pairwise disjoint.
\item[(iii)] For every $j\in J^c$, $\mathcal{O}_j^{c,0}$ is a compact, connected and oriented, smooth submanifold with boundary of $\mathcal{T}_{\pa^c}$ of dimension $r$.
\item[(iv)] For every $j\in J^c$, $\mathcal{O}_j^{c,1}$ is a compact, connected and oriented, smooth submanifold with boundary of $Y^c \subset X^c$ of dimension $r$.
\item[(v)] For every $j\in J^c$, $\Phi_j^c:\mathcal{O}_j^{c,0} \times [0,1] \rightarrow \mathcal{S}_0$ is smooth and for every $t\in [0,1]$, the restriction of $\Phi_j^c$ to $\mathcal{O}_j^{c,0} \times \{t\}$ is a diffeomorphism from $\mathcal{O}_j^{c,0} \times \{t\}$ to its image 
\[
\mathcal{O}_j^{c,t} := \Phi_j^c \left(\mathcal{O}_j^{c,0} \times \{t\}\right).
\]
Moreover, $\Phi_j^c(\pa,0) = \pa$ for every $\pa \in \mathcal{O}_j^{c,0}$ and  $\mathcal{O}_j^{c,1}$ is the diffeomorphic image of $\mathcal{O}_j^{c,0}\times \{1\}$ by $\Phi_j^c$.

\item[(vi)]  For every $j\in J^c$ and any $t,t' \in [0,1]$ with $t\neq t'$, $\mathcal{O}_j^{c,t} \cap \mathcal{O}_j^{c,t'}=\emptyset$.
\item[(vii)]  For every $j\in J^c$ and every $\pa \in \mathcal{O}_i^{c,0}$, the smooth curve $t\in [0,1] \rightarrow \Phi_i^c(\pa,t)$ is a $\mathcal{L}_{\pa}$-geodesic with non zero speed.
\item[(viii)] The set $\mathcal{O}^{c,0} := \cup_{j\in J^c}  \mathcal{O}_j^{c,0}$ has measure $\geq \nu/16$ with respect to the volume form $\eta_{\vert \mathcal{T}_{\bar{\pa}}}$.
\end{itemize}
\end{lemma}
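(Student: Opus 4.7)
The plan is to extract finitely many pairwise disjoint closed $r$-disks from a transported copy of $\check{\mathcal{A}}^{\bar{\pa},c}$ sitting in $\mathcal{T}_{\pa^c}$, and then apply the diffeomorphism $H^c$ furnished by Lemma \ref{lem:MetricProperties}(iv) to generate the tubes. First I would fix a point $\pa^c \in \mathcal{H}^c$ close enough to $\bar{\pa}$ that the transverse section $\mathcal{T}_{\pa^c}$ is disjoint from $\overline{V}$; this is possible since $V$ is a neighborhood of $\hat{\pa} \neq \bar{\pa}$ (hence shrinkable to exclude $\bar{\pa}$) and transverse sections vary continuously along the leaf. I would then set $A_0 := T^{\bar{\pa},\pa^c}(\check{\mathcal{A}}^{\bar{\pa},c}) \subset \mathcal{U}^c \cap \mathcal{T}_{\pa^c}$. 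A key observation here is that the form $\eta = (\omega^{\perp})^l$ is holonomy-invariant for the foliation $\vec{\mathcal{K}}|_{\mathcal{S}_0}$: $\omega^{\perp}$ is closed (restriction of $\omega$) and $\iota_{\xi}\omega^{\perp} = 0$ for every $\xi \in \vec{\mathcal{K}}$, so Cartan's formula gives $\mathcal{L}_{\xi}\eta = d\iota_{\xi}\eta + \iota_{\xi}d\eta = 0$. Consequently $A_0$ carries the same $\eta$-mass as $\check{\mathcal{A}}^{\bar{\pa},c}$, namely $\geq \nu/4$.

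Since $F^c \subset \mathcal{U}^c$ has Lebesgue (hence $\eta$) measure zero by Lemma \ref{lem:MetricProperties}(ii), the set $A := A_0 \setminus F^c$ still has $\eta$-mass at least $\nu/4$. It lies in $(\mathcal{U}^c \setminus F^c) \cap \mathcal{T}_{\pa^c}$, the region where $D^c$ is smooth, $\Gamma^c(\pa')$ is a singleton, and $H^c$ is a smooth diffeomorphism onto its image. For any $\pa' \in A$, the minimizing geodesic $\psi^{c,\pa'}$ must end inside $Y^c$ and not at $\partial Y^c \subset Z^c$: the transition map preserves leaves, so $\pa'$ and its preimage in $\check{\mathcal{A}}^{\bar{\pa},c}$ share the same leaf of $\vec{\mathcal{K}}$, which by Lemma \ref{lem:MetricReduction}(iv) avoids the $\vec{\mathcal{K}}$-saturated set $\mathcal{C}^c$. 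The choice of $\pa^c$ further ensures $\mathcal{T}_{\pa^c} \cap \overline{Y^c} = \emptyset$, hence $D^c(\pa') > 0$ and the geodesic has non-zero speed. Partitioning $A = \bigsqcup_{i \in I^c} A_i$ according to which component $Y_i^c$ contains $\psi^{c,\pa'}(1)$, a standard Vitali-type covering argument in local coordinates on $\mathcal{T}_{\pa^c}$, performed at density points of the $A_i$ which are not accumulation points of other $Y_j^c$'s, produces a finite pairwise-disjoint family of smooth closed disks $\mathcal{O}_j^{c,0}$ of dimension $r$, each whose $H^c(\cdot,1)$-image stays inside a single $Y_i^c$, with total $\eta$-mass at least $\nu/16$. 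Each $\mathcal{O}_j^{c,0}$ is compact, connected, and inherits an orientation from $\eta|_{\mathcal{T}_{\pa^c}}$, which matches $\eta|_{\mathcal{T}_{\bar{\pa}}}$ under the holonomy identification.

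With these disks in hand I would define $\Phi_j^c := H^c|_{\mathcal{O}_j^{c,0} \times [0,1]}$. Properties (i), (iii) and (viii) hold by the disk construction. Property (v) follows because each time slice of $H^c$ is a diffeomorphism onto its image, a direct consequence of Lemma \ref{lem:MetricProperties}(iv) and the fact that two distinct minimizing geodesics landing at the same endpoint would violate uniqueness of $\Gamma^c(\pa')$ on $\mathcal{U}^c \setminus F^c$. Property (vii) is immediate, the non-vanishing speed coming from $D^c > 0$ on $\mathcal{O}_j^{c,0}$. Properties (ii) and (vi) follow at once from the injectivity of $H^c$: an equation $H^c(\pa_1,t_1) = H^c(\pa_2,t_2)$ forces $(\pa_1,t_1) = (\pa_2,t_2)$, ruling out both cross-tube overlap at $t = 1$ and intra-tube overlap at distinct times. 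For (iv), each $\mathcal{O}_j^{c,1} = H^c(\mathcal{O}_j^{c,0} \times \{1\})$ is the diffeomorphic image of a connected compact $r$-disk under an injective immersion, hence a compact, connected, oriented smooth submanifold with boundary of dimension $r$; it is contained in $Y^c \subset X^c$ and, by the refinement of the disk in the previous step, in fact inside a single component $Y_i^c$.

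I expect that the most delicate step will be the holonomy invariance of $\eta$, which requires the identification of $\eta$ as a basic form for $\vec{\mathcal{K}}|_{\mathcal{S}_0}$, combined with the geometric reduction that places $\mathcal{T}_{\pa^c}$ outside $\overline{V}$ (so $D^c > 0$, without which property (vi) would fail). A secondary technical point is guaranteeing that each $\mathcal{O}_j^{c,1}$ lies within a single component $Y_i^c$ rather than jumping across components that accumulate on one another, handled by choosing sufficiently small disks around density points that are not accumulation points of other components of $Y^c$; the finiteness and subanalyticity of the decomposition $Y^c = \bigsqcup_i Y_i^c$ ensure that such density points form a set of full measure in each $A_i$.
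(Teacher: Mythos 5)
Your overall strategy works and differs from the paper in an interesting way: instead of extracting disks via a Vitali-type covering, the paper fixes a smooth subanalytic function $G_c:\mathcal{T}_{\pa^c}\to[0,\infty)$ vanishing exactly on $(F^c\cap\mathcal{V}^{c,\bar{\pa}}_{\pa^c})\cup\partial\mathcal{V}^{c,\bar{\pa}}_{\pa^c}$, applies Sard's theorem to obtain a regular value $\epsilon_{\bar k}$, and takes the single sublevel set $\Omega^c_{\epsilon_{\bar k}} = G_c^{-1}([\epsilon_{\bar k},\infty))\cap\mathcal{V}^{c,\bar{\pa}}_{\pa^c}$ as $\mathcal{O}^{c,0}$; the $\mathcal{O}_j^{c,0}$ are just its connected components, which are automatically finite, compact, oriented manifolds-with-boundary. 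This bypasses the component-wise decomposition of $Y^c$ that you go through (the lemma only asks $\mathcal{O}_j^{c,1}\subset Y^c$, not $\subset Y_i^c$). Your holonomy-invariance observation for $\eta=(\omega^\perp)^l$ is correct ($\omega^\perp$ is closed, $\iota_\xi\omega^\perp=0$ for $\xi\in\vec{\mathcal{K}}$, hence $\mathcal{L}_\xi\eta=0$) and would give exact preservation of transverse mass; the paper only needs the weaker continuity estimate obtained by taking $\pa^c$ close to $\bar{\pa}$, but your version is cleaner. The geometric reduction forcing $\mathcal{T}_{\pa^c}\cap\overline{V}=\emptyset$ is unnecessary: $D^c>0$ on $(\mathcal{U}^c\setminus F^c)\cap\mathcal{T}_{\pa^c}$ already follows from the injectivity of $H^c$ in Lemma \ref{lem:MetricProperties}(iv), since a point with $D^c(\pa')=0$ would have $H^c(\pa',t)=\pa'$ for all $t$.

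There is one genuine gap. You justify $\psi^{c,\pa'}(1)\in Y^c$ for $\pa'\in A_0 = T^{\bar{\pa},\pa^c}(\check{\mathcal{A}}^{\bar{\pa},c})$ by claiming that $\mathcal{C}^c$ is \emph{$\vec{\mathcal{K}}$-saturated}, so that the leaf through $\check{\pa}'\in\check{\mathcal{A}}^{\bar{\pa},c}$ avoids it. But $\mathcal{C}^c$ is defined via leaf-curves of \emph{bounded length} $\leq\bar{\ell}+2$ reaching $\overline{Z^c}$, and is therefore not saturated: translating $\check{\pa}'$ along the leaf to $\pa'$ adds length, so $\check{\pa}'\notin\mathcal{C}^c$ does not propagate to $\pa'\notin\mathcal{C}^c$. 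The statement of Lemma \ref{lem:MetricReduction}(iv) gives you only $\check{\mathcal{A}}^{\bar{\pa},c}\cap\mathcal{C}^c=\emptyset$, not invariance of $\mathcal{C}^c$ under holonomy. The correct ingredient — that $A_0\subset\mathcal{U}^c$ and that the open neighborhood $\mathcal{U}^c$ of $\check{\mathcal{A}}^{\bar{\pa},c}$ is constructed (in the proof of Lemma \ref{lem:MetricProperties}) so that $D^c(\pa)<\bar{\ell}+2$ and $\mathcal{I}^c(\pa)\cap\overline{Y^c}\subset Y^c\setminus\overline{\mathcal{O}}$ for every $\pa\in\mathcal{U}^c$ — is what actually guarantees that the minimizing geodesics from $A_0$ land in $Y^c$. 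With that substitution in place, the rest of your argument goes through.
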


\begin{proof}[Proof of Lemma \ref{lem:MainReduction}]
Fix $0<c<\epsilon$ and consider the sets $\mathcal{V}^{c,\bar{\pa}} \subset \mathcal{T}_{\bar{\pa}}$, $\mathcal{H}^c \subset \mathcal{L}_{\bar{\pa}}$, $F^c\subset \mathcal{S}_0$ and $\mathcal{U}^c \subset  \mathcal{W}$ given by Lemma \ref{lem:MetricProperties}. The set $\mathcal{U}^c$ is foliated by the leaves $\mathcal{V}^{c,\bar{\pa}}_{\pa}$ with $\pa \in  \mathcal{H}^c$ and by Lemma \ref{lem:MetricProperties} (ii), the set $F^c \cap \mathcal{U}^c$ has Lebesgue measure zero. Hence Fubini's Theorem implies that there is $\pa^c\in \mathcal{H}^c$ such that the set $F^c \cap \mathcal{V}^{c,\bar{\pa}}_{\pa^c} \subset \mathcal{T}_{\pa^c}$ has measure zero. Without loss of generality, up to shrinking $ \mathcal{V}^{c,\bar{\pa}}_{\pa^c}$ in $\mathcal{T}_{\pa^c}$ we may assume that the relatively compact open set $\mathcal{V}^{c,\bar{\pa}}_{\pa^c}$ satisfies
\[
\overline{\mathcal{V}^{c,\bar{\pa}}_{\pa^c}} \subset \mathcal{T}_{\pa^c},
\]
and moreover, by Lemma  \ref{lem:MetricReduction} (iii), we may assume by taking $\pa^c$ sufficiently close to $\bar{\pa}$  that the compact set 
\[
\check{\mathcal{A}}^{\bar{\pa},c}_{\pa^c}\subset \mathcal{V}^{c,\bar{\pa}}_{\pa^c} \subset \mathcal{T}_{\pa^c}
\]
has measure $\geq \nu/8$ with respect to the volume form $\eta_{\vert \mathcal{T}_{\pa^c}}$. Consider a smooth function $G_c:\mathcal{T}_{\pa^c} \rightarrow [0,\infty)$ such that
\[
G^{-1}_c \left(\{0\}\right) = \left(F^c\cap \mathcal{V}^{c,\bar{\pa}}_{\pa^c}\right) \cup \partial \mathcal{V}^{c,\bar{\pa}}_{\pa^c}
\]
and define for every $\epsilon>0$ the compact set (note that the set is compact because it does not intersect $\partial \mathcal{V}^{c,\bar{\pa}}_{\pa^c}$ where $G_c$ is vanishing)
\[
\Omega^{c}_{\epsilon} := G^{-1}_c \bigl( [\epsilon, \infty)\bigr) \cap \mathcal{V}^{c,\bar{\pa}}_{\pa^c}.
\]
By Sard's Theorem, $G_c$ admits a decreasing sequence $\{\epsilon_k\}_{k\in \N}$ of regular values converging to $0$. Thus we have 
\[
 \mathcal{V}^{c,\bar{\pa}}_{\pa^c} = \bigcup_{k\in \N} \Omega^{c}_{\epsilon_k} \quad \mbox{with} \quad \Omega^{c}_{\epsilon_k} \subset \Omega^{c}_{\epsilon_{k+1}} \quad \forall k \in \N 
\]
and for every $k\in \N$ the set $\Omega^{c}_{\epsilon_k}$ is a  compact, oriented, smooth submanifold with boundary of $\mathcal{T}_{\pa^c}$ of dimension $r$. As a consequence, since the measure of  $\mathcal{V}^{c,\bar{\pa}}_{\pa^c}$, which contains $ \check{\mathcal{A}}^{\bar{\pa},c}_{\pa^c}$, with respect to the volume form $\eta_{\vert \mathcal{T}_{\pa^c}}$ is $\geq \nu/8$, there is $\bar{k}\in \N$ large enough such that the measure (with respect to the volume form $\eta_{\vert \mathcal{T}_{\pa^c}}$) of the set
\[
\mathcal{O}^{c,0}  := \Omega^{c}_{\epsilon_{\bar{k}}}
\]
is $\geq \nu/16$. By construction, $\mathcal{O}^{c,0} $ is the union of finitely many components $\mathcal{O}^{c,0}_j$ satisfying properties (i), (iii), (viii) of the statement, where $j$ varies in a finite set $J^c$. Then, for every $j\in J^c$, we define $\Phi_j^c:\mathcal{O}_j^{c,0} \times [0,1] \rightarrow \mathcal{S}_0$ by
\[
\Phi_j^c := H^c_{| \mathcal{O}_j^{c,0} \times [0,1]}
\]
and we set
\[
\mathcal{O}_j^{c,1} :=  \Phi_j^c \left( \mathcal{O}_j^{c,0} \times \{1\} \right). 
\]
The properties (ii), (iv), (v), (vi) and (vii) are satisfied by the construction together with Lemma \ref{lem:MetricProperties} (iv).
\end{proof}

We are now ready to complete the proof of Theorem \ref{thm:Sardminrank}. Let us temporarily fix $0<c<\epsilon$. By Lemma \ref{lem:MainReduction}, there are a finite set $J^c$, two collections of sets $\{\mathcal{O}_j^{c,0}\}_{j\in J^c}$, $\{\mathcal{O}_j^{c,1}\}_{j \in^c}$ and a collection of functions $\{\Phi_j^c:\mathcal{O}_j^{c,0} \times [0,1] \rightarrow \mathcal{S}_0\}_{j\in J^c}$ such that the properties (i)-(viii) are satisfied. Set for every $j\in J^c$ (see Figure \ref{fig3})
\[
\mathcal{M}^c_j :=\Bigl\{ \Phi_j^c (\pa,t) \, \vert \, (\pa,t) \in \mathcal{O}_j^c \times [0,1]\Bigr\}. 
\]

\begin{figure}[H]
\begin{center}
\includegraphics[width=10cm]{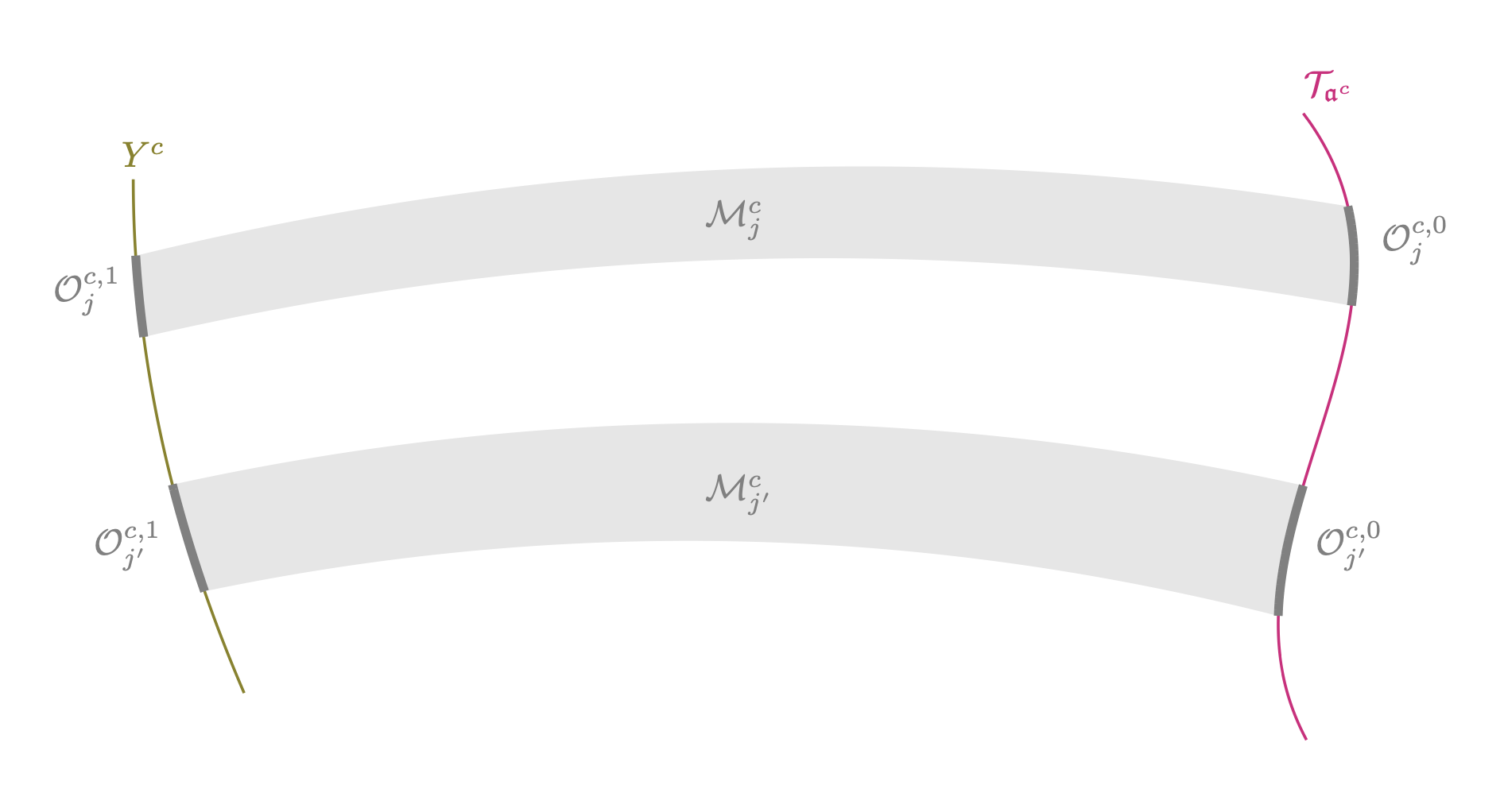}
\caption{The sets $\mathcal{O}_j^{c,0}, \mathcal{O}_j^{c,1}$ and $\mathcal{M}_j^{c}$ \label{fig3}}
\end{center}
\end{figure}

By properties (iii)-(vii), it is a topological manifold (with boundary) of dimension $r+1$ whose boundary can be written as 
\[
\partial \mathcal{M}_j^c = \mathcal{O}_j^{c,0} \cup \mathcal{O}_j^{c,1} \cup \mathcal{C}^c_j
\]
where both $\mathcal{O}_j^{c,0}$ and $\mathcal{O}_j^{c,1}$ are compact, connected, oriented, smooth submanifolds with boundary (Lemma \ref{lem:MainReduction} (iii)-(iv)) and where the cylindrical part $ \mathcal{C}^c_j$  given by
\[
 \mathcal{C}^c_j := \Bigl\{ \Phi_j^c (\pa,t) \, \vert \, (\pa,t) \in \partial \mathcal{O}_j^{c,0} \times (0,1)\Bigr\}
\]
is a smooth open oriented submanifold of dimension $r=2l$ satisfying
\[
\eta_{\vert   \mathcal{C}^c_j } = 0,
\]
because any point of $\mathcal{C}^c_j$ has the form $\Phi_j^c(\pa,t)$ with $\pa \in \partial \mathcal{O}_j^{c,0}$ and (by Lemma \ref{lem:MainReduction} (vii)) 
\[
0 \neq \frac{\partial  \Phi_j^c}{\partial t}(\pa,t) \in \left( T_{\Phi_j^c(\pa,t)}\mathcal{C}_j^c \right) \cap \left( T_{\Phi_j^c(\pa,t)}\mathcal{L}_{\pa}\right)
\]
\[
\mbox{with} \quad T_{\Phi_j^c(\pa,t)}\mathcal{L}_{\pa}=\vec{\mathcal{K}}\left(\Phi_j^c(\pa,t) \right) = \mbox{ker} \left(\omega^{\perp}_{\Phi_j^c(\pa,t)}\right).
\]
As a consequence, by applying Stokes' Theorem we have for every $j\in J^c$,
\[
\int_{\mathcal{O}_j^{c,0}} \eta = \int_{\mathcal{O}_j^{c,1}} \eta,
\]
which imply (because, by Lemma \ref{lem:MainReduction} (i)-(ii),  the sets $\mathcal{O}_j^{c,0}$ (resp. $\mathcal{O}_j^{c,1}$) are pairwise disjoint)
\begin{eqnarray}\label{24mars1}
\int_{\mathcal{O}^{c,0}} \eta = \int_{\cup_{j\in J^c} \mathcal{O}_j^{c,0}} \eta = \sum_{j\in J^c} \int_{\mathcal{O}_j^{c,0}} \eta = \sum_{j\in J^c} \int_{\mathcal{O}_j^{c,1}} \eta = \int_{\cup_{j\in J^c} \mathcal{O}_j^{c,1}} \eta =  \int_{\mathcal{O}^{c,1}} \eta.
\end{eqnarray}
But, on the one hand, by Lemma \ref{lem:MainReduction} (viii), we have 
\[
\int_{\mathcal{O}^{c,0}} \eta \geq \frac{\nu}{16},
\]
and, on the other hand Lemma \ref{lem:WitnessReduction} (ii) together with equation (\ref{23mars1}) yield ($\tilde{g}_{\vert X^c}$ denotes the metric induced by $\tilde{g}$ on $X^c$)
\[
\left| \int_{\mathcal{O}^{c,1}} \eta \right| \leq \int_{\mathcal{O}^{c,1}} |\eta| \leq \int_{\mathcal{O}^{c,1}} \delta(c) \, d\mbox{vol}^{\tilde{g}_{\vert X^c}} \leq \int_{X^c} \delta(c) \, d\mbox{vol}^{\tilde{g}_{\vert X^c}} \leq \delta(c) \, C,
\]
which tends to zero as $c$ tends to zero. Thus (\ref{24mars1}) cannot be satisfied for all $c>0$, this is a contradiction.

\bibliographystyle{plain}
\bibliography{Abnormal-Bibliography}

\end{document}